
\documentclass{daj}

\dajAUTHORdetails{%
  title = {A Decomposition of Multicorrelation Sequences  for Commuting Transformations along Primes}, 
  author = {Anh N. Le, Joel Moreira, and Florian K. Richter},
  plaintextauthor = {Anh N. Le, Joel Moreira, Florian K. Richter},
    %
    %
    %
  runningtitle = {Commuting Multicorrelations along Primes}, 
    %
    %
   %
  keywords = {multicorrelation sequences, nilsequences, commuting transformations},
}   

\dajEDITORdetails{%
   year={2021},
   number={4},
   received={6 February 2020},   
   published={7 June 2021},  
   doi={10.19086/da.22056},       
}   


\usepackage[english]{babel}
\usepackage{amsfonts,csquotes}
\usepackage{mathrsfs}
\usepackage{bbm}
\usepackage{latexsym}
\usepackage{math dots}
\usepackage{amssymb}
\usepackage{mathtools}

\usepackage{enumitem}
\setlist{nolistsep}
\usepackage{amsthm}
\usepackage[capitalize]{cleveref}

\numberwithin{equation}{section}

\newtheoremstyle{plain}{3mm}{3mm}{\slshape}{}{\bfseries}{.}{.5em}{}
\newtheoremstyle{definition}{2mm}{2mm}{}{}{\bfseries}{.}{.5em}{}
\theoremstyle{plain}
	
\newtheorem{theorem}{Theorem}[section]

\newtheorem{lemma}[theorem]{Lemma}

\newtheorem{proposition}[theorem]{Proposition}
\newtheorem{corollary}[theorem]{Corollary}
\theoremstyle{definition}
\newtheorem{definition}[theorem]{Definition}
\newtheorem{remark}[theorem]{Remark}

\newtheorem{question}{Question}

\theoremstyle{plain}
\newcounter{MainTheoremCounter}

\newtheorem{Maintheorem}[MainTheoremCounter]{Theorem}


\newcommand{\N}{\mathbb{N}}
\newcommand{\Z}{\mathbb{Z}}
\newcommand{\R}{\mathbb{R}}
\newcommand{\C}{\mathbb{C}}
\newcommand{\Q}{\mathbb{Q}}

\newcommand{\sign}{\operatorname{sign}}

\newcommand{\define}[1]{{\itshape #1}}

\renewcommand{\epsilon}{\varepsilon}
\renewcommand{\leq}{\leqslant}
\renewcommand{\geq}{\geqslant}
\renewcommand{\setminus}{\backslash}
\renewcommand{\Re}{{\rm Re}\,}

\renewcommand{\P}{\mathbb{P}}

\newcommand{\E}{\mathop{\mathbb{E}}}
\newcommand{\B}{\mathcal{B}}

\renewcommand{\d}{~\mathsf{d}}


\usepackage[normalem]{ulem}

\newcommand{\hk}[1]{{\left\vert\kern-0.25ex\left\vert\kern-0.25ex\left\vert #1\right\vert\kern-0.25ex\right\vert\kern-0.25ex\right\vert}}

\newcommand{\nil}{\operatorname{Nil}}
\newcommand{\nz}{{\mathbb{N}_0}} 

\begin{document}

\begin{frontmatter}[classification=text]

\title{A Decomposition of Multicorrelation Sequences  for Commuting Transformations along Primes} 

\author[anh]{Anh N. Le}
\author[joel]{Joel Moreira}
\author[florian]{Florian K. Richter\thanks{This author is supported by the National Science Foundation under grant number DMS~1901453.}}

\begin{abstract}
We study multicorrelation sequences arising from systems with commuting transformations.
Our main result is a refinement of a decomposition result of Frantzikinakis and it states that any multicorrelation sequences for commuting transformations can be decomposed, for every $\epsilon>0$, as the sum of a nilsequence $\phi(n)$ and a sequence $\omega(n)$ satisfying $$\lim_{N\to\infty}\frac{1}{N}\sum_{n=1}^N |\omega(n)|<\epsilon$$ and $$\lim_{N\to\infty}\frac{1}{|\P\cap [N]|}\sum_{p\in \P\cap [N]} |\omega(p)|<\epsilon.$$
\end{abstract}
\end{frontmatter}

\section{Introduction}
Given a measure preserving transformation $T$ on a probability space $(X,\mu)$ and functions $f_0,\dots,f_k\in L^\infty(X)$, the sequence
\begin{equation}
\label{eq:multicorrelation_seq_1}
\alpha(n)=\int_Xf_0\cdot T^nf_1\cdot T^{2n}f_2 {\cdot\ldots\cdot} T^{kn}f_k\d\mu
\end{equation}
is called a \define{multicorrelation sequence} \cite{Bergelson_Host_Kra05} or \define{multiple correlation sequence} \cite{Leibman10,Leibman15,Frantzikinakis15b,Host_Kra_18}.

Multicorrelation sequences play a central role in the theory of multiple recurrence and its connections to combinatorics and number theory.
The study of the structure of multicorrelation sequences was pioneered by Bergelson, Host and Kra \cite{Bergelson_Host_Kra05}, who showed that so-called \define{nilsequences} (see \cref{def_nilsequences} below) arise as the natural object governing their behavior.
More precisely, they proved that for any multicorrelation sequence $(\alpha(n))_{n \in N}$ defined as in \eqref{eq:multicorrelation_seq_1} for an ergodic system $(X, \mu, T)$, there exists a nilsequence $(\psi(n))_{n \in \N}$ such that\footnote{Given a sequence $a:\N\to\C$ we write $\lim_{N-M\to\infty}\frac1{N-M}\sum_{n=M}^Na(n)=L$ to denote that for every $\epsilon>0$ there exists $N_0$ such that $|\frac1{N-M}\sum_{n=M}^Na(n)-L|<\epsilon$ for all $M,N\in\N$ with $N-M>N_0$.}
\begin{equation}
\label{eqn_BHK_result}
\lim_{N-M\to\infty}\frac1{N-M}\sum_{n=M}^N\big|\alpha(n)-\psi(n)\big|=0.
\end{equation}
Their result was later generalized by Leibman in \cite{Leibman15} to multicorrelation sequences coming from non-ergodic systems and in \cite{Leibman10} to \define{polynomial multicorrelation sequences}, i.e., sequences of the form
\[
\alpha(n)=\int_X f_0\cdot T^{q_1(n)} f_1\cdot T^{q_2(n)}f_2 {\cdot\ldots\cdot} T^{q_k(n)}f_k\d\mu
\]
where $q_1,\ldots,q_k\in \Q[x]$ are polynomials satisfying $q_i(\N)\subset\Z$.\footnote{Naturally, if $T$ is non-invertible then the condition $q_i(\N)\subset\Z$ needs to be replaced by $q_i(\N)\subset\N$.}
Another strengthening was obtained in \cite{Le17, Tao_Teravainen_17}, where it was shown that additionally to \eqref{eqn_BHK_result} one has
\begin{equation}
\label{eqn_BHK_primes}
\lim_{N\to\infty}\frac1{|\P\cap [N]|}\sum_{p\in \P\cap [N]}\big|\alpha(q(p))-\psi(q(p))\big|=0,
\end{equation}
where $q \in \Z[x]$ is any non-constant polynomial, $\P$ is the set of primes and $[N]$ denotes the set $\{1,\dots,N\}$.
This shows that even along the subsequence of primes, the behavior of $(\alpha(n))_{n \in \N}$ is described by a nilsequence.

Given commuting measure preserving transformations $T_1,\dots,T_k$ on a probability space $(X,\B,\mu)$ and functions $f_0,\dots,f_k\in L^\infty(X)$, one can consider the more general expression
\begin{equation}
\label{eq:oct-3-5}
    \alpha(n)=\int_Xf_0\cdot T_1^n f_1\cdot T_2^n f_2\cdots T_k^n f_k\d\mu
\end{equation}
called a \define{multicorrelation sequence for commuting transformations}.
In \cite{Frantzikinakis15b} Frantzikinakis established an approximate version of the Bergelson-Host-Kra structure theorem in the case of commuting transformations, showing that for every multicorrelation sequence for commuting transformations $(\alpha(n))_{n \in \N}$ and every $\epsilon>0$ there exists a nilsequence $(\psi(n))_{n \in \N}$ such that
\begin{equation}\label{eq_frantzikinakis}
\lim_{N-M\to\infty}\frac1{N-M}\sum_{n=M}^N\big|\alpha(n)-\psi(n)\big| \leq \epsilon.
\end{equation}
It is still an open problem whether one can take $\epsilon=0$ in \eqref{eq_frantzikinakis} (see \cref{question_zeroepsilon} in \cref{sec:open_questions}).

Our main result is a generalization of Frantzikinakis's theorem along primes, answering affirmatively a question asked in \cite{Le17}:
\begin{Maintheorem}
\label{thm:decomposition_along_primes}
Given commuting measure preserving transformations $T_1,\dots,T_k$ on a probability space $(X,\B,\mu)$ and functions $f_0,\dots,f_k\in L^\infty(X)$, let
\begin{equation}
\label{eq:oct-14-1}
    \alpha(n)=\int_Xf_0\cdot T_1^n f_1\cdot T_2^n f_2\cdots T_k^n f_k\d\mu.
\end{equation}
Then for every $\epsilon > 0$, there exists a $k$-step nilsequence $(\psi(n))_{n \in \N}$ such that
    \[
        \lim_{N - M \to \infty} \frac{1}{N-M} \sum_{n=M}^{N-1} |\alpha(n) - \psi(n)| \leq \epsilon
    \]
    and
    \[
       \lim_{N\to\infty}\frac1{|\P\cap [N]|}\sum_{p\in \P\cap [N]}\big|\alpha(p)-\psi(p)\big| \leq \epsilon.
    \]
\end{Maintheorem}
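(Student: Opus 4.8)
The first displayed inequality is precisely Frantzikinakis's theorem \eqref{eq_frantzikinakis} (the index range $M\le n\le N-1$ versus $M\le n\le N$ is immaterial, since $\alpha$ and $\psi$ are bounded), so the real task is to produce, for each $\epsilon>0$, a \emph{single} $k$-step nilsequence that controls both the Cesàro average and the average over primes. One cannot simply invoke \eqref{eq_frantzikinakis}: being small in density and being small along the primes are a priori unrelated (the indicator of the primes has density $0$ yet equals $1$ at every prime), so the error term produced by Frantzikinakis's theorem must be upgraded to one that is negligible along primes.

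The plan is, first, to revisit the argument behind \eqref{eq_frantzikinakis} while keeping track of \emph{uniformity}. Using Host's ergodic box seminorms for $\Z^k$-actions, Frantzikinakis's reduction to averaged and dual correlation sequences, the norm-convergence theorems of Tao and Walsh, and, if needed, passage to a pleasant/sated extension in the sense of Austin, one would obtain for every $\epsilon>0$ an ``$\epsilon$-characteristic'' nilfactor $\mathcal Z$ of $(X,\mu,T_1,\dots,T_k)$ such that $\psi(n):=\int_X \E(f_0\mid\mathcal Z)\cdot\prod_i T_i^n\,\E(f_i\mid\mathcal Z)\d\mu$ is a $k$-step nilsequence bounded by $\prod_i\|f_i\|_\infty$, while the error $\omega:=\alpha-\psi$ is a finite sum of multicorrelation sequences for commuting transformations, each having some function orthogonal to $\mathcal Z$. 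Such an $\omega$ is not merely $\epsilon$-small in density: it is $\epsilon$-small in the relevant box seminorm, so that on any window $[N]$ it lies within $O(\epsilon)$ in $L^2[N]$ of a $k$-step nilsequence of complexity bounded in $\epsilon$, the remainder being small in an appropriate Gowers-type uniformity norm. (It is open whether $\epsilon=0$ is attainable here.)

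Second, the bound $\limsup_N \frac1{|\P\cap[N]|}\sum_{p\in\P\cap[N]}|\omega(p)|\le C\epsilon$ should then follow from the transference machinery of Green--Tao and Green--Tao--Ziegler, exactly along the lines of the single-transformation arguments of \cite{Le17,Tao_Teravainen_17}. The bounded-complexity structured part of $\omega|_{[N]}$, being $O(\epsilon)$-small in $L^2[N]$, is $O(\epsilon)$-small along the primes by the quantitative equidistribution of nilsequences of bounded complexity along the primes (after a $W$-trick to separate the abelian obstruction), while the uniform part contributes $O(\epsilon)$ along the primes via the Gowers-uniformity of the $W$-tricked von Mangoldt function together with the dense-model/transference decomposition. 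Combining the two steps, $\psi$ is the desired $k$-step nilsequence, up to replacing $\epsilon$ by a fixed multiple of itself, which is harmless.

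I expect the structural step to be the main obstacle: the Host--Kra structure theory for commuting $\Z^k$-actions is far less complete than for a single transformation, so isolating an error that is genuinely uniform in a box seminorm — rather than merely small in density — requires delicate seminorm bookkeeping and, plausibly, a sated-extension argument. A secondary difficulty is that Frantzikinakis's theorem provides no effective bound on the complexity of $\psi$ in terms of $\epsilon$, which is why the prime average is handled scale by scale on $[N]$, splitting $\omega$ there into a bounded-complexity part and a uniform part, rather than with one global nilsequence of uncontrolled complexity.
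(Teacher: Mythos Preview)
Your proposal contains a genuine gap at the structural step, and you correctly sense this when you write that the Host--Kra theory for commuting $\Z^k$-actions ``is far less complete than for a single transformation.'' The problem is more serious than delicate bookkeeping: the existence of an ``$\epsilon$-characteristic nilfactor'' $\mathcal Z$ of the $\Z^k$-system such that $\psi(n)=\int_X \E(f_0\mid\mathcal Z)\cdot\prod_i T_i^n\,\E(f_i\mid\mathcal Z)\d\mu$ is a $k$-step nilsequence is not known, and is essentially equivalent to \cref{question_zeroepsilon}. Austin's sated/pleasant extensions and Walsh's theorem give norm convergence of the averages, but they do not identify the characteristic factors as nilsystems; without that, your $\psi$ need not be a nilsequence at all, and your assertion that the error $\omega$ is ``$\epsilon$-small in the relevant box seminorm'' has no justification. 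Frantzikinakis's argument does not proceed via such a factor; it shows that $\alpha$ is anti-uniform and uniquely ergodic, and deduces the approximate nilsequence property abstractly, with no control on the complexity or anti-uniformity constant of the resulting $\psi$.

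The paper avoids the missing $\Z^k$ structure theory by a change of viewpoint: rather than seeking a nilfactor of $(X,\mu,T_1,\dots,T_k)$, it studies the Furstenberg system of the \emph{sequence} $\alpha$ itself, a single $\Z$-action, and shows (\cref{thm:Furstenberg_sytem_correlations}) that this system is an inverse limit of $k$-step nilsystems. From this one extracts (\cref{thm:strengthening_Frantzikinakis_decomposition_1}) that, for each $W,b$, the sequence $n\mapsto\alpha(Wn+b)$ can be $\epsilon$-approximated by a convex combination of dual nilsequences $D_{k+1}\phi$ with $\lVert\phi\rVert_{U^{k+1}(\N)}\leq1$. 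The point is not that the error is uniform, but that the \emph{approximating nilsequence} has anti-uniformity constant bounded by $1$, \emph{uniformly in $W$ and $b$}. This is exactly what is needed in the transference step: after decomposing $g=g_1+g_2$ with $\lVert g_2\rVert_{U^{k+1}}$ small, the pairing $\langle g_2,\psi_{W,b}\rangle$ is controlled via \cref{lem:g_2_and_dual_function} precisely because of this bound, while $\langle g_2,\alpha(W\cdot+b)\rangle$ is controlled because multicorrelation sequences are $1$-anti-uniform. A generic nilsequence from Frantzikinakis's theorem would have anti-uniformity constant depending on its (uncontrolled) complexity, and the argument would break down as $W\to\infty$. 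Finally, the single $\psi$ satisfying both inequalities is obtained by gluing a Frantzikinakis approximant $\psi_0$ (on residues not coprime to a large $W$) with the prime-tailored $\psi_1$ (on residues coprime to $W$), using that $\phi(W)/W$ is small.
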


The methods used in the proof of \cref{thm:decomposition_along_primes} are quite general and can be adapted to give the following enhancement.

\begin{Maintheorem}
\label{thm:decomposition_poly_correlation_along_primes}
Let $m, k \in \N$ and $q_{i,j} \in \Q[x]$ satisfying $q_{i,j}(\N) \subset \Z$ for all $1 \leq i \leq m, 1 \leq j \leq k$. Then there exists $\ell = \ell(m, k, \max_{i,j} \deg(q_{i,j}))$ such that the following happens: Let $\big(T_{i,j}\big)_{i\in[m],j\in[k]}$ be commuting measure preserving transformations on a probability space $(X,\B,\mu)$ and functions $f_0,\dots,f_k\in L^\infty(X)$. Define
\begin{equation}
\label{eq:nov-27-1}
    \alpha(n)=\int_X f_0 \cdot \prod_{i = 1}^{m} T_{i,1}^{q_{i,1}(n)} f_1 \cdot \prod_{i = 1}^m T_{i, 2}^{q_{i,2}(n)} f_2 \cdots \prod_{i=1}^m T_{i,k}^{q_{i,k}(n)} f_k\d\mu.
\end{equation}
Then for every $\epsilon > 0, r \in \N$ and $s \in \Z$, there exists an $\ell$-step nilsequence $(\psi(n))_{n \in \N}$ for which
    \[
        \lim_{N - M \to \infty} \frac{1}{N-M} \sum_{n=M}^{N-1} |\alpha(n) - \psi(n)| \leq \epsilon
    \]
    and
    \[
       \lim_{N\to\infty}\frac1{|\P\cap [N]|}\sum_{p\in \P\cap [N]}\big|\alpha(r p + s)-\psi(r p + s)\big| \leq \epsilon.
    \]
\end{Maintheorem}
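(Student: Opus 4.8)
The plan is to run the argument behind \cref{thm:decomposition_along_primes} with two extra layers of bookkeeping. The polynomial iterates $q_{i,j}(n)$ and the ``thickened'' maps $\prod_{i}T_{i,j}^{q_{i,j}(n)}$ are absorbed by a Bergelson--Leibman PET induction, which replaces every van der Corput estimate for linear iterates by its polynomial analogue; and the progression $rp+s$, together with the demand that $\ell$ depend only on $m$, $k$ and $\max_{i,j}\deg q_{i,j}$ and never on the system, is precisely what makes the $W$-trick in the prime average go through uniformly. Assume the $f_j$ are $1$-bounded.

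Structural input. Re-running Frantzikinakis's approximation \eqref{eq_frantzikinakis} in this polynomial commuting setting, but keeping the remainder in an explicit form, one obtains for every $\epsilon>0$ a decomposition $\alpha(n)=\psi(n)+\omega(n)$ where $\psi$ is an $\ell$-step nilsequence with $\ell=\ell(m,k,\max_{i,j}\deg q_{i,j})$, produced by replacing each $f_j$ by its conditional expectation onto a suitable pro-nilfactor attached to the polynomial family, and $\omega$ is the telescoping remainder: a finite sum of sequences of the shape \eqref{eq:nov-27-1} in which at least one function has small ``box'' seminorm. Three features of this seminorm drive the proof: (i) it tensorizes, $\|g\otimes h\|\leq\|g\|\,\|h\|$; (ii) it controls the Cesàro average of the associated polynomial multicorrelation sequence for commuting transformations (the polynomial generalized von Neumann theorem); and (iii) for a sequence $w$ dominated by a pseudorandom measure and of small $U^{s}$-Gowers norm, with $s=s(m,k,\max_{i,j}\deg q_{i,j})$, it controls the $w$-weighted version of that average (the weighted polynomial generalized von Neumann theorem underlying the polynomial Green--Tao theorem). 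Taking the seminorm of the distinguished function small enough in terms of $\epsilon$, property (ii) and $1$-boundedness already yield $\limsup_{N-M\to\infty}\frac{1}{N-M}\sum_{n=M}^{N-1}|\alpha(n)-\psi(n)|\leq\epsilon$.

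The prime average. It remains to bound $\limsup_{N\to\infty}\frac1{|\P\cap[N]|}\sum_{p\in\P\cap[N]}|\omega(rp+s)|$, and by Cauchy--Schwarz it suffices to bound the same average of $|\omega(rp+s)|^{2}$. The point is that $|\omega(n)|^{2}$ is itself a finite sum of polynomial multicorrelation sequences for commuting transformations, now on $(X\times X,\mu\times\mu)$ with transformations $T_{i,j}\times T_{i,j}$, the same polynomials, and functions $g_j\otimes\bar g_{j'}$; by (i) each summand still carries a function of small seminorm. So we are reduced to showing that for a polynomial multicorrelation sequence $\beta$ for commuting transformations, one of whose functions has small seminorm, the prime average $\frac1{|\P\cap[N]|}\sum_{p\in\P\cap[N]}\beta(rp+s)$ is small. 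After the routine reductions (discard the finitely many primes dividing $r$, then split into residue classes) this becomes a $W$-tricked average $\frac{\phi(W)}{W}\,\E_{n\leq N}\Lambda(Wn+b)\,\beta(\tilde r n+\tilde s)$ with $\gcd(b,W)=1$ and suitable $\tilde r,\tilde s$. Writing $\frac{\phi(W)}{W}\Lambda(W\,\cdot+b)=1+\Lambda^{\flat}_{W,b}$, the constant term contributes $\E_{n\leq N}\beta(\tilde r n+\tilde s)$, which is small by (ii) since $q_{i,j}(\tilde r n+\tilde s)$ has the same degree as $q_{i,j}$; and the term $\E_{n\leq N}\Lambda^{\flat}_{W,b}(n)\,\beta(\tilde r n+\tilde s)$ is $O\big(\|\Lambda^{\flat}_{W,b}\|_{U^{s}[N]}\big)$ by (iii), which tends to $0$ as $W\to\infty$ uniformly in $N$ and $b$ by the Gowers-norm estimates for the von Mangoldt function. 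Fixing first $\epsilon$, then the seminorm threshold, then $W$ large, finishes the bound; the single nilsequence $\psi$ serves both conclusions and every $r,s$.

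The main obstacle is the structural input: producing the decomposition $\alpha=\psi+\omega$ with $\ell$ and $s$ bounded purely in terms of $m$, $k$ and the degrees, never the system, and with the remainder expressed through one seminorm simultaneously enjoying (i)--(iii). This is a genuine merger of Frantzikinakis's approximation scheme for commuting transformations (which only controls the Cesàro average, and only up to $\epsilon$, since the commuting case lacks an exact characteristic nilfactor) with the Bergelson--Leibman PET machinery for polynomial configurations and with the pseudorandom-measure formalism of the Green--Tao method, needed so that (iii) applies to the unbounded weight $\Lambda^{\flat}_{W,b}$. Everything after that is a careful but essentially routine assembly; the appearance of $rp+s$ in the statement merely gives the $W$-trick the flexibility it needs and plays no deeper role.
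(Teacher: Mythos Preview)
Your sketch rests on a ``structural input'' that is not available and is, in the commuting case, essentially an open problem. You write that one can re-run Frantzikinakis's argument ``keeping the remainder in an explicit form'', obtaining $\alpha=\psi+\omega$ where $\psi$ is produced by projecting each $f_j$ onto a ``pro-nilfactor attached to the polynomial family'' and $\omega$ is a telescoping sum of multicorrelation sequences each carrying a function of small box seminorm. Frantzikinakis's proof does \emph{not} proceed that way: it is a pure duality argument (anti-uniformity plus regularity implies approximate nilsequence), and the nilsequence $\psi$ it produces is not $\int f_0\cdot\prod_j T_j^{\,\cdot}\,\E(f_j\mid Z)\,d\mu$ for any factor $Z$. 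A pro-nilfactor with the property that replacing the $f_j$ by their conditional expectations yields a nilsequence is precisely what is unknown for commuting transformations (cf.\ \cref{question_zeroepsilon}). So your properties (i)--(iii) of the seminorm are fine, but there is no decomposition to feed them. Your own final paragraph identifies this as ``the main obstacle''; it is in fact the whole obstacle, and your outline does not indicate how to overcome it. (Note also that your last line claims one $\psi$ works for all $r,s$; that is strictly stronger than the theorem and is singled out in the paper as open.)

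The paper's route avoids this entirely. Rather than controlling the \emph{remainder} $\omega=\alpha-\psi$, it controls the \emph{approximant} $\psi$: using the Furstenberg system of $\alpha$ (\cref{thm:Furstenberg_sytem_correlations}) and a Hahn--Banach argument, it shows (\cref{thm:strengthening_Frantzikinakis_decomposition}) that Frantzikinakis's $\psi$ can be chosen as a convex combination of dual nilsequences $D_{\ell+1}\phi$ with $\lVert\phi\rVert_{U^{\ell+1}(\N)}\leq1$, uniformly in the system. One then applies the $W$-trick to $n\mapsto\alpha(r(Wn+b)+s)$ and uses the transference principle to split the weight $\Lambda_{W,b}$ as bounded plus Gowers-small. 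The bounded part is handled by the Ces\`aro bound; the Gowers-small part against $\alpha$ by anti-uniformity of multicorrelation sequences, and against $\psi_{W,b}$ by a Cauchy--Schwarz--Gowers computation (\cref{lem:g_2_and_dual_function}) which exploits exactly the explicit dual-nilsequence form of $\psi_{W,b}$ and the uniform bound $\lVert\phi\rVert_{U^{\ell+1}(\N)}\leq1$. The resulting $\psi$ does depend on $r,s$.
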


It is natural to ask whether in \cref{thm:decomposition_poly_correlation_along_primes} one can take $\psi$ to be independent of $r$ and $s$. 
In fact we expect quite a lot more to be true (see \cref{question_zeroepsilon_prime}) but the methods in this paper do not seem to be strong enough to guarantee this strengthening.

\subsection{Proof strategy}
In this subsection we describe in broad strokes the main ideas behind our proof of \cref{thm:decomposition_along_primes}.
The reader is directed to \cref{sec:preliminaries} and Subsection \ref{sec:prelim_Furstenberg_system} for definitions.

To begin with, we call upon a standard trick from multiplicative number theory to replace the average $\E_{p\in \P\cap [N]}|\alpha(p)-\psi(p)|$ with the weighted average $\E_{n\in[N]}\Lambda(n)|\alpha(n)-\psi(n)|$, where $\Lambda$ is the classical von Mangoldt function.
Then, using the Gowers uniformity of the W-tricked von Mangoldt function \cite[Theorem 7.2]{Green_Tao10} and a transference principle introduced by Green and Tao \cite{Green_Tao08}, we can compare the weighted average $\E_{n\in[N]}\Lambda(n)|\alpha(n)-\psi(n)|$ with the unweighted average $\E_{n\in[N]}|\alpha(n)-\psi(n)|$.
A similar comparison was carried out by Tao and Ter\"av\"ainen in \cite{Tao_Teravainen_17} for multicorrelation sequences for single transformations.
In the case of multiple commuting transformations, however, a serious technical complication arises during this step.
To effectively relate the average $\E_{n\in[N]}\Lambda(n)|\alpha(n)-\psi(n)|$ to the average $\E_{n\in[N]}|\alpha(n)-\psi(n)|$ using the $W$-trick, one must actually compare $\E_{n\in[N]}\Lambda_{W,b}(n)|\alpha(Wn + b) - \psi(Wn+b)|$ with $\E_{n\in[N]}|\alpha(Wn + b) - \psi(Wn+b)|$ for large $W$ and uniformly over all $b\in[W]$ coprime to $W$.
As $W$ increases, it is more challenging to control the second type of average in the case of multiple commuting transformations than it is in the special case of single transformations, ultimately because \eqref{eq_frantzikinakis} is not available for $\epsilon=0$.

In order to overcome this issue, we need a variant of Frantzikinakis's theorem where we have better control on the nilsequences that appear.
To obtain this variant, we first found the following description of the Furstenberg system associated with a multicorrelation sequence.

\begin{theorem}
\label{thm:Furstenberg_sytem_correlations}
    Let $\alpha: \N \to \C$ be as defined in \eqref{eq:oct-3-5} and let $(X, T)$ be the topological Furstenberg system associated with $(\alpha(n))_{n \in \N}$.
    Then
    \begin{enumerate}[label=(\roman{enumi}),ref=(\roman{enumi}),leftmargin=*]
        \item $(X, T)$ is uniquely ergodic, and
        \item if $\mu$ is the unique $T$-invariant measure on $X$, then the system $(X, \mu, T)$ is measure theoretically isomorphic to an inverse limit of $k$-step nilsystems.
    \end{enumerate}
\end{theorem}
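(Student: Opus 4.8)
The plan is to build an explicit model for the topological Furstenberg system of $(\alpha(n))_{n\in\N}$ directly from the data $(X,\B,\mu,T_1,\dots,T_k,f_0,\dots,f_k)$, and then identify its measure-theoretic structure using the known structure theory for systems generated by commuting transformations. First I would recall that $\alpha(n) = \langle f_0, T_1^n f_1 \cdots T_k^n f_k\rangle$ can be realized on the product system: let $Y = X^{k}$ (or $X^{k+1}$), equipped with the single transformation $S = T_1 \times T_2 \times \cdots \times T_k$ acting on a suitably chosen point/measure. In fact, writing $F = f_1 \otimes f_2 \otimes \cdots \otimes f_k$ on $X^k$ and letting $\Delta_*\mu$ be the diagonal measure (the image of $\mu$ under $x \mapsto (x,\dots,x)$), one has $\alpha(n) = \int_{X^k} (f_0 \text{ on the first coordinate, say}) \cdot S^n F \d\Delta_*\mu$ after absorbing $f_0$ appropriately. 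The sequence $\alpha$ is thus a "diagonal correlation" of the single transformation $S$ on $(X^k, \Delta_*\mu)$; crucially $\Delta_*\mu$ need not be $S$-invariant, which is exactly why the classical Bergelson–Host–Kra description does not apply verbatim and why only the approximate statement \eqref{eq_frantzikinakis} was previously known.

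Next I would pass to the Furstenberg system: the topological Furstenberg system $(Z,R)$ of $(\alpha(n))$ is the orbit closure of the point $(\alpha(n+m))_{m}$ in the shift on $\C^{\Z}$ (or $\C^{\nz}$), and its invariant measures are the weak-$*$ limits of $\frac1{N}\sum_{n<N}\delta_{R^n\alpha}$. The key observation is that the closed subalgebra of $\ell^\infty$ generated by $\alpha$ and its shifts is contained in the algebra generated by sequences of the form $n \mapsto \int g_0 \cdot T_1^n g_1 \cdots T_k^n g_k \d\mu$; by the structure theory of characteristic factors for commuting transformations — specifically the fact (due to Host, and Frantzikinakis–Kra–type results) that the $L^2$-limit behavior of such averages is controlled by a factor that is an inverse limit of $k$-step nilsystems — each such sequence is, up to arbitrarily small $L^1$-density error, a $k$-step nilsequence. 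The technical heart is then to promote this "approximation by nilsequences" into an exact topological statement: I would show that the Furstenberg system factors onto (and is in fact isomorphic, after discarding a null set, to) an inverse limit of $k$-step nilsystems, by checking that the natural candidate factor map (reading off the relevant nilsequence coordinates) separates points up to measure zero, and that unique ergodicity of the pronilsystem model forces unique ergodicity upstairs. Unique ergodicity of $(Z,R)$ follows because the Cesàro averages $\frac1N\sum_{n<N}R^n\alpha$ actually converge (not merely along a subsequence): this convergence is a consequence of the Host–Kra–Ziegler / Walsh convergence theorem for commuting transformations applied to each generating correlation, so there is only one invariant measure.

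The main obstacle I anticipate is the jump from the \emph{approximate} decomposition \eqref{eq_frantzikinakis} — nilsequence plus an error of small density — to the \emph{exact} structural statement that the Furstenberg \emph{system} is an inverse limit of $k$-step nilsystems with no error term. The resolution should exploit that the error terms, while not identically zero on the original system, contribute zero to every invariant measure of the Furstenberg system: one argues that for each $\epsilon$ the $\epsilon$-error sequence $\omega_\epsilon$ satisfies $\int |\omega_\epsilon| \d\nu \le \epsilon$ for every invariant measure $\nu$ of $Z$ (by the density bound and the definition of Furstenberg system), hence in the limit $\epsilon \to 0$ the coordinate function on $Z$ agrees $\nu$-a.e. with a limit of $k$-step nilsequences, which pins down the Kronecker-type and higher-order factors and yields the pronilsystem structure. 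A secondary subtlety is bookkeeping the number of steps: one must track that composing/limiting the nilsequences arising from $k$ commuting transformations stays within $k$-step nilsequences, which I would handle by invoking the precise degree bounds in the commuting-transformations structure theory rather than re-deriving them.
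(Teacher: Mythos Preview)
Your proposal correctly identifies the two halves of the argument and handles unique ergodicity adequately (convergence of uniform Ces\`aro averages for all sequences in the shift-algebra of $\alpha$, via Walsh or via Frantzikinakis' approximate-nilsequence theorem, does the job). The opening paragraph about realizing $\alpha$ on $(X^k,\Delta_*\mu,S)$ with the non-invariant diagonal measure is a red herring: nothing downstream uses it, and the paper makes no use of this realization either.

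The genuine gap is in part~(ii), precisely at the step you flag as the main obstacle. You write that ``the $\epsilon$-error sequence $\omega_\epsilon$ satisfies $\int |\omega_\epsilon|\d\nu\le\epsilon$ for every invariant measure $\nu$ of $Z$,'' and hence ``the coordinate function on $Z$ agrees $\nu$-a.e.\ with a limit of $k$-step nilsequences.'' But $\omega_\epsilon=\alpha-\psi_\epsilon$ is a sequence on $\N$, not a function on the Furstenberg system $Z$; the approximating nilsequence $\psi_\epsilon$ lives on its own nilsystem, and there is no a priori way to regard it as a function on $Z$, let alone a $Z_k(Z)$-measurable one. So the integral $\int|\omega_\epsilon|\d\nu$ is not even defined, and the passage from ``$\alpha$ is close in density to some nilsequence'' to ``the coordinate function $F$ lies in $L^\infty(Z_k(Z))$'' is exactly what remains to be proved. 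One could try to salvage this by building a joint Furstenberg system of $(\alpha,\psi_\epsilon)$, checking its unique ergodicity, and then using that conditional expectation onto a factor sends $Z_k$-measurable functions to $Z_k$-measurable functions; but none of this machinery appears in your sketch.

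The paper takes a different and cleaner route that sidesteps the problem of realizing $\psi_\epsilon$ on $Z$. Rather than pushing $\psi_\epsilon$ onto $Z$, it tests $F$ against an arbitrary $G\in L^\infty(Z)$ with $\hk{G}_{k+1}=0$. Approximating $G$ by a continuous function, unique ergodicity turns $\int F\cdot G\d\nu$ into a sequence average $\lim_N\E_{n\in[N]}\alpha(n)b(n)$ with $b(n)=G(R^nz_0)$; splitting $\alpha=\psi_\epsilon+\omega_\epsilon$ and using that smooth $k$-step nilsequences have finite $(k{+}1)$-anti-uniform seminorm gives
\[
\Big|\int_Z F\cdot G\d\nu\Big|\le \lVert\psi_\epsilon\rVert_{U^{k+1}(\N)}^*\cdot\hk{G}_{k+1}+\epsilon\lVert G\rVert_{L^2}=\epsilon\lVert G\rVert_{L^2}.
\]
Letting $\epsilon\to0$ gives $F\perp G$, hence $F\in L^\infty(Z_k(Z))$; density of the algebra generated by $F$ then forces $Z_k(Z)=Z$. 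The key missing idea in your proposal is this use of \emph{anti-uniformity} of the approximating nilsequence, which is what allows one to work entirely inside $Z$ without ever having to transplant $\psi_\epsilon$ there.
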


Using \cref{thm:Furstenberg_sytem_correlations} we can produce, for every multicorrelation sequence $(\alpha(n))_{n \in \N}$, a nilsequence that, in addition to satisfying \eqref{eq_frantzikinakis}, is composed of more elementary building blocks called \define{dual nilsequences}, whose anti-uniformity seminorm is easier to control.

\begin{theorem}
\label{thm:strengthening_Frantzikinakis_decomposition_1}
    Let $\alpha: \N \to \C$ be as defined in \eqref{eq:oct-3-5} with $\lVert f_i \rVert_{\infty} \leq 1$ for $0 \leq i \leq k$.
    Then for every $\epsilon > 0$, there exists a $k$-step nilsequence $(\psi(n))_{n \in \N}$ such that
    \begin{enumerate}[label=(\roman{enumi}),ref=(\roman{enumi}),leftmargin=*]
        \item $$\lim_{N-M \to \infty} \E_{n\in[M,N)} |\alpha(n) - \psi(n)| < \epsilon,\ \text{and}$$

        \item $(\psi(n))_{n \in \N}$ is a convex combination of finitely many dual nilsequences of the form $(D_{k+1} \phi(n))_{n \in \N}$ in which $(\phi(n))_{n \in \N}$ is a $k$-step nilsequence with $\lVert \phi \rVert_{U^{k+1}(\N)} \leq1$.
    \end{enumerate}
\end{theorem}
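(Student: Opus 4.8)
The plan is to pass to the topological Furstenberg system, carry out the decomposition there by a Hahn--Banach argument dual to the $U^{k+1}$ uniformity seminorm, and transport the result back to an honest nilsequence using unique ergodicity.

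\textbf{Step 1: reduction, and the key estimate.} Assume $\|f_i\|_\infty\le1$. Let $(X,T)$ be the topological Furstenberg system of $\alpha$, so $\alpha(n)=F_0(T^n\alpha)$ with $F_0\in\mathsf{C}(X)$ the zeroth coordinate, $\|F_0\|_\infty\le1$; by \cref{thm:Furstenberg_sytem_correlations}, $(X,T)$ is uniquely ergodic with invariant measure $\mu$, and $(X,\mu,T)$ is an inverse limit of $k$-step nilsystems $(Z_j,R_j)$ with factor maps $\rho_j\colon X\to Z_j$, which — after passing if necessary to a topological model and using that the topological and measurable pro-nilfactors agree on the unique minimal subsystem of a uniquely ergodic system — we take to be continuous. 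The crucial quantitative input is a \emph{sharp} generalized von Neumann estimate: $|\langle F_0,g\rangle_{L^2(\mu)}|\le\|g\|_{U^{k+1}(X)}$ for every $g\in L^\infty(\mu)$ (with $\|\cdot\|_{U^{k+1}(X)}$ the Host--Kra seminorm of the system); equivalently, for every $1$-bounded $w$, $\big|\lim_{N}\E_{n\in[N]}\alpha(n)\overline{w(n)}\big|\le\|w\|_{U^{k+1}(\N)}$, the equivalence coming from genericity of $\alpha$ and unique ergodicity. I prove this by induction on $k$: writing the correlation as an inner product against $\overline{f_0}$, applying Cauchy--Schwarz and then van der Corput in $n$ replaces $T_2,\dots,T_k$ by $T_1^{-1}T_2,\dots,T_1^{-1}T_k$ (one fewer transformation; pass to natural extensions to make the $T_i$ invertible), the functions by $1$-bounded multiplicative derivatives, and the weight by $\Delta_h w$, and Jensen's inequality upgrades the controlling norm from $U^{k}$ to $U^{k+1}$. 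Since every function is $1$-bounded, all constants equal $1$.

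\textbf{Step 2: Hahn--Banach.} Let $\mathcal K\subseteq L^1(\mu)$ be the closed convex hull of $\{\,D_{k+1}^{X}(G\circ\rho_j):j\in\N,\ G\in\mathsf{C}(Z_j),\ \|G\|_{U^{k+1}(Z_j)}\le1\,\}$, where $D_{k+1}^{X}$ is the ergodic dual function operator of $(X,\mu,T)$ (note $D_{k+1}^{X}(G\circ\rho_j)=(D_{k+1}^{Z_j}G)\circ\rho_j\in\mathsf C(X)$). I claim $F_0\in\mathcal K$. If not, Hahn--Banach gives $\beta\in L^\infty(\mu)$, $\|\beta\|_\infty\le1$, and $\delta>0$ with $\mathrm{Re}\,\langle F_0,\beta\rangle>\sup_{\Psi\in\mathcal K}\mathrm{Re}\,\langle\Psi,\beta\rangle+\delta$. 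Approximating $\beta$ in $L^1(\mu)$ by $1$-bounded continuous functions factoring through finite-level nilfactors, and using the homogeneity identity $\mathrm{Re}\,\big\langle D_{k+1}^{X}\!\big(g/\|g\|_{U^{k+1}(X)}\big),g\big\rangle=\|g\|_{U^{k+1}(X)}$ together with density, gives $\sup_{\Psi\in\mathcal K}\mathrm{Re}\,\langle\Psi,\beta\rangle\ge\|\beta\|_{U^{k+1}(X)}$; while Step 1 gives $\mathrm{Re}\,\langle F_0,\beta\rangle\le\|F_0\|_{U^{k+1}(X)}^{*}\|\beta\|_{U^{k+1}(X)}\le\|\beta\|_{U^{k+1}(X)}$ (the first factor being $\le1$ by Step 1). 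These contradict the strict inequality, so $F_0\in\mathcal K$. Membership in the \emph{closed} convex hull yields, for each $\epsilon>0$, only a \emph{finite} convex combination within $\epsilon$, which is why this does not settle the (open) case $\epsilon=0$ of \eqref{eq_frantzikinakis}.

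\textbf{Step 3: transporting back, and the main obstacle.} Given $\epsilon>0$, choose a finite convex combination $\Psi=\sum_i c_i\,D_{k+1}^{X}(G_i\circ\rho_{j_i})\in\mathsf C(X)$ with $\|F_0-\Psi\|_{L^1(\mu)}<\epsilon$, and set $\psi(n):=\Psi(T^n\alpha)$. Since $\rho_{j_i}$ is continuous, $\phi_i(n):=G_i(R_{j_i}^n\rho_{j_i}(\alpha))$ is a genuine $k$-step nilsequence with $\|\phi_i\|_{U^{k+1}(\N)}=\|G_i\|_{U^{k+1}(Z_{j_i})}\le1$ (unique ergodicity), and expanding the ergodic dual function and using equidistribution on $Z_{j_i}$ one checks $D_{k+1}^{X}(G_i\circ\rho_{j_i})(T^n\alpha)=D_{k+1}\phi_i(n)$; thus $\psi=\sum_i c_i\,D_{k+1}\phi_i$ is of the form required in (ii) (in particular a $k$-step nilsequence). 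For (i), $|F_0-\Psi|\in\mathsf C(X)$, so unique ergodicity gives $\lim_{N-M\to\infty}\E_{n\in[M,N)}|\alpha(n)-\psi(n)|=\int_X|F_0-\Psi|\d\mu<\epsilon$. Granting \cref{thm:Furstenberg_sytem_correlations}, the two substantive points — and the main obstacles — are: (a) obtaining constant exactly $1$ in Step 1, since with any larger constant the Hahn--Banach argument only yields the conclusion for $\epsilon$ above a fixed threshold, so the normalization $\|\phi\|_{U^{k+1}(\N)}\le1$ in (ii) genuinely forces the sharp estimate (this is the mechanism behind ``the anti-uniformity seminorm is easier to control''); and (b) the topological bookkeeping ensuring that the ergodic dual functions in $\mathcal K$ are continuous on $X$ and restrict along the orbit of $\alpha$ to honest dual nilsequences, which rests on being able to take the $k$-step nilfactor maps of the uniquely ergodic system $(X,T)$ continuous.
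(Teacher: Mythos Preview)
Your overall architecture---pass to the Furstenberg system, use the sharp anti-uniformity bound $\|\alpha\|_{U^{k+1}(\N)}^*\le1$, run a Hahn--Banach argument to land $F_0$ in the closed convex hull of dual functions, then transport back---is exactly the paper's strategy (\cref{thm:strengthening_Frantzikinakis_decomposition} via \cref{lem:antiuniform_in_convex_hull}). Steps 1 and 2 are fine and match the paper closely.

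The gap is in Step 3, precisely at the point you yourself flag as obstacle (b). You assert that the factor maps $\rho_j\colon X\to Z_j$ can be taken continuous ``after passing if necessary to a topological model and using that the topological and measurable pro-nilfactors agree on the unique minimal subsystem.'' But the Furstenberg system $(X,T)$ of an approximate nilsequence is only transitive, not minimal: the point $\alpha$ whose orbit you need to read off may well lie outside the unique minimal subsystem (which is merely the support of $\mu$). Restricting to that minimal piece loses the very point you care about, and changing the topological model destroys the concrete identification $\alpha(n)=F_0(T^n\alpha)$ with $F_0$ continuous. So neither escape route works, and \cref{thm:Furstenberg_sytem_correlations} gives you only a \emph{measure-theoretic} isomorphism to an inverse limit of nilsystems. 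Consequently $\Psi$ is not known to be continuous on $X$, and you cannot invoke unique ergodicity on $|F_0-\Psi|$ directly, nor evaluate $\rho_{j_i}(\alpha)$ as a well-defined point of $Z_{j_i}$.

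The paper circumvents this without ever claiming continuous factor maps. After obtaining $\tilde G=G'\circ\pi$ with $\pi\colon X\to Y$ a measure-theoretic factor map onto a single $k$-step nilsystem, it invokes \cref{prop:generic_for_joining} (due to Host--Kra): because $(Y,S)$ is distal, there exist a point $y_0\in Y$ and intervals $(I_N)$ along which $(x_0,y_0)$ is generic for the graph joining, so $\lim_N\E_{n\in I_N}|F(T^nx_0)-G'(S^ny_0)|=\|F-\tilde G\|_{L^1(\mu)}<\epsilon$. Then \cref{prop:regular_joining} upgrades this subsequential average to a uniform Ces\`aro limit, using that both $\alpha$ and the nilsequence $n\mapsto G'(S^ny_0)$ are approximate nilsequences. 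This pair of propositions is exactly the missing ingredient in your Step 3.
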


The enhanced control over the anti-uniformity of the dual nilsequences appearing in \cref{thm:strengthening_Frantzikinakis_decomposition_1} translates to an effective control on the size of the averages $\E_{n\in[N]}\big(\Lambda_{W,b}(n)-1\big)\big|\alpha(Wn + b) - \psi(Wn+b)\big|$, even for large $W$ and uniformly over $b$, which allows us to finish the proof of \cref{thm:decomposition_along_primes}.

\paragraph{Structure of the paper.}
We start by setting up some notation and background in \cref{sec:preliminaries}. \cref{sec:Furstenberg_systems} is used to provide some information on Furstenberg systems of bounded sequences and includes a proof of \cref{thm:Furstenberg_sytem_correlations}.
We prove \cref{thm:strengthening_Frantzikinakis_decomposition_1} in \cref{sec:enhancement} and Theorems \ref{thm:decomposition_along_primes} and \ref{thm:decomposition_poly_correlation_along_primes} in \cref{sec:decomposition_along_primes}.
Finally, in \cref{sec:open_questions}, we state some open questions.


\section{Preliminaries}

\label{sec:preliminaries}

\subsection{Gowers norms}
    For a finite set $A$ and a function $f: A \to \C$, define $\E_{x \in A} f(x) = \frac{1}{|A|} \sum_{x \in A} f(x)$.
    Throughout this paper we denote by $\Z_N$ the quotient $\Z/(N\Z)$.

    Given $k,N \in \N$ and a function $f : \Z_{N} \to \C$, we define the $k$-Gowers norm of $f$ on $\Z_N$, denoted by $\lVert f \rVert_{U^k(\Z_N)}$ to be
    \[
        \lVert f \rVert_{U^k(\Z_N)} = \left( \E_{n \in \Z_N} \E_{\underline{h} \in \Z_{N}^k} \prod_{\underline{\eta} \in \{0,1\}^k} \mathcal{C}^{|\underline{\eta}|} f(n + \underline{\eta} \cdot \underline{h}) \right)^{1/2^k},
    \]
     where $\mathcal{C}$ denotes complex conjugation and, for $\underline{\eta} \in \{0,1\}^k$ and $\underline{h} = (h_1, \ldots, h_k) \in [N]^k$, we let $|\underline{\eta}|$ be the number of $1$'s in $\underline{\eta}$ and $\underline{\eta} \cdot \underline{h}:=\eta_1 h_1 + \ldots + \eta_k h_k$.
    We also let $\{0,1\}^k_* := \{0,1\}^k \setminus \{(0, 0, \ldots, 0)\}$.

    Gowers \cite{Gowers01} proved that $\lVert \cdot \rVert_{U^k(\Z_N)}$ is a norm when $k \geq 2$, and that it satisfies the following analogue of the Cauchy-Schwarz Inequality (see {\cite[Lemma 3.8]{Gowers01} or \cite[Equation (B.12)]{Green_Tao10}}).
    \begin{proposition}[Cauchy-Schwarz-Gowers Inequality]
    \label{lem:Cauchy_Schwarz_Gowers_inequality}
    Let $k, N \in \N$. For every $\underline{\eta} \in \{0,1\}^k$, let $f_{\underline{\eta}}: \Z_N \to \C$. Then
    \[
        \left| \E_{n \in \Z_N} \E_{\underline{h} \in \Z_N^k} \prod_{\underline{\eta} \in \{0,1\}^k} f_{\underline{\eta}}(n + \underline{\eta} \cdot \underline{h}) \right| \leq \prod_{\underline{\eta} \in \{0,1\}^k} \lVert f_{\underline{\eta}} \rVert_{U^k(\Z_N)}.
    \]
\end{proposition}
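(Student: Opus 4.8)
The plan is to carry out the classical argument of Gowers: apply the Cauchy--Schwarz inequality once in each of the $k$ ``directions'' $h_1, \ldots, h_k$. I would begin by noting that both sides of the asserted inequality are unchanged if one inserts the conjugation operators $\mathcal{C}^{|\underline{\eta}|}$ into the product on the left; indeed the defining expression for $\lVert \cdot \rVert_{U^k(\Z_N)}$ is invariant under complex conjugation of its argument, so replacing each $f_{\underline{\eta}}$ by $\mathcal{C}^{|\underline{\eta}|} f_{\underline{\eta}}$ alters neither side. It therefore suffices to bound the \emph{Gowers inner product}
\[
\langle (f_{\underline{\eta}})_{\underline{\eta} \in \{0,1\}^k} \rangle \;:=\; \E_{n \in \Z_N} \E_{\underline{h} \in \Z_N^k} \prod_{\underline{\eta} \in \{0,1\}^k} \mathcal{C}^{|\underline{\eta}|} f_{\underline{\eta}}(n + \underline{\eta} \cdot \underline{h}),
\]
which satisfies $\langle (g, \ldots, g) \rangle = \lVert g \rVert_{U^k(\Z_N)}^{2^k}$; the target becomes $|\langle (f_{\underline{\eta}}) \rangle| \leq \prod_{\underline{\eta} \in \{0,1\}^k} \lVert f_{\underline{\eta}} \rVert_{U^k(\Z_N)}$.

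The engine is a single ``folding'' step carried out in an arbitrary direction $i \in [k]$. Grouping the product defining $\langle (f_{\underline{\eta}}) \rangle$ according to the value of $\eta_i$, the factors with $\eta_i = 0$ do not involve $h_i$, while in the factors with $\eta_i = 1$ a translation of the base point (replacing $n + h_i$ by a fresh averaging variable) removes the dependence on $n$ altogether. Averaging the two groups over their now-independent base points and writing $\underline{h}' := (h_j)_{j \neq i}$, this expresses $\langle (f_{\underline{\eta}}) \rangle$ as $\E_{\underline{h}'} A(\underline{h}') \overline{A'(\underline{h}')}$ for suitable functions $A, A'$ of $\underline{h}'$, so the ordinary Cauchy--Schwarz inequality in $\underline{h}'$ yields
\[
\big| \langle (f_{\underline{\eta}}) \rangle \big|^2 \;\leq\; \big( \E_{\underline{h}'} |A(\underline{h}')|^2 \big)\big( \E_{\underline{h}'} |A'(\underline{h}')|^2 \big) \;=\; \langle (f^{(i,0)}_{\underline{\eta}}) \rangle \cdot \langle (f^{(i,1)}_{\underline{\eta}}) \rangle,
\]
where $(f^{(i,\omega)}_{\underline{\eta}})$ denotes the cube obtained from $(f_{\underline{\eta}})$ by replacing every function with $\eta_i \neq \omega$ by its partner indexed with $\eta_i = \omega$ (so that the layers $\eta_i = 0$ and $\eta_i = 1$ now coincide). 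Both right-hand factors have the form $\E_{\underline{h}'} | \cdot |^2 \geq 0$; specializing all the $f_{\underline{\eta}}$ to a single $g$ shows in particular that $\langle (g, \ldots, g) \rangle \geq 0$, so that $\lVert \cdot \rVert_{U^k(\Z_N)}$ is well defined.

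I would then iterate this folding step over $i = 1, 2, \ldots, k$. Since every cube produced by a folding has nonnegative Gowers inner product, the square root may be extracted legitimately at each stage, and after all $k$ steps one arrives at $|\langle (f_{\underline{\eta}}) \rangle| \leq \prod_{\underline{\omega} \in \{0,1\}^k} \langle C_{\underline{\omega}} \rangle^{2^{-k}}$, where $C_{\underline{\omega}}$ is the cube obtained by folding direction $i$ with choice $\omega_i$ for every $i \in [k]$. Folding direction $i$ with choice $\omega_i$ discards every layer except $\eta_i = \omega_i$ and makes the $i$-th coordinate irrelevant, so performing all $k$ foldings collapses $C_{\underline{\omega}}$ to the constant cube each of whose $2^k$ entries equals $f_{\underline{\omega}}$. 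Therefore $\langle C_{\underline{\omega}} \rangle = \langle (f_{\underline{\omega}}, \ldots, f_{\underline{\omega}}) \rangle = \lVert f_{\underline{\omega}} \rVert_{U^k(\Z_N)}^{2^k}$, and substituting this back gives $|\langle (f_{\underline{\eta}}) \rangle| \leq \prod_{\underline{\omega} \in \{0,1\}^k} \lVert f_{\underline{\omega}} \rVert_{U^k(\Z_N)}$, which is the claim.

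The argument is classical (it is essentially the proof of \cite[Lemma 3.8]{Gowers01}), and I do not expect a genuine obstacle; the only step requiring care is the combinatorial bookkeeping above --- tracking exactly which function lands in which slot after the $k$ successive foldings, so that the fully folded cube $C_{\underline{\omega}}$ is recognized as the constant cube with value $f_{\underline{\omega}}$ --- together with the observation that every quantity arising after at least one folding is real and nonnegative, which is precisely what legitimizes the repeated extraction of square roots.
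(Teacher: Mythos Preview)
Your argument is correct and is precisely the classical iterated Cauchy--Schwarz proof. Note that the paper does not actually supply its own proof of this proposition; it simply cites \cite[Lemma 3.8]{Gowers01} and \cite[Equation (B.12)]{Green_Tao10}, and your sketch reproduces the standard argument found there.
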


\subsection{Uniformity and anti-uniform seminorms in $\N$}

\label{sec:anti-uniform seminorms}

Given a bounded sequence $\phi:\N\to\C$, we denote by ${\mathcal A}(\phi)$ the smallest closed sub-algebra of $\ell^\infty(\N)$ that contains $\phi$, and is invariant under the left-shift and under pointwise conjugation.
We say that $\phi$ is \define{uniquely ergodic} if
\begin{equation}
\label{eq:uniquely_ergodic}
\forall \psi\in{\mathcal A}(\phi)\qquad\lim_{N-M\to\infty}\frac1{N-M}\sum_{n=M}^N\psi(n) \text{ exists.}
\end{equation}
The choice for the term \emph{uniquely ergodic} will be clear after \cref{prop:uniquely_ergodic}.
It is well known that nilsequences are uniquely ergodic, and it follows from Walsh's ergodic theorem \cite{Walsh12} that multicorrelation sequences are uniquely ergodic as well.

If $\phi$ is uniquely ergodic, then the \define{$k$-uniformity seminorm of $\phi$} is defined as
\[
    \lVert \phi \rVert_{U^k(\mathbf{\N})} = \left( \lim_{H \to \infty} \E_{\underline{h} \in [H]^k} \lim_{N \to \infty} \E_{n \in [N]} \prod_{\underline{\eta} \in \{0,1\}^k} \mathcal{C}^{|\underline{\eta}|} \phi(n + \underline{\eta} \cdot \underline{h}) \right)^{1/2^k}.
\]

The sequence $\phi$ is called \emph{$k$-anti-uniform} if there exists $C>0$ such that for all uniquely ergodic sequences $b$,
\begin{equation}
\label{eq:oct-3-4}
    \limsup_{N-M \to \infty} \left| \E_{n \in [M, N)} \phi(n) b(n) \right| \leq C\|b\|_{U^k(\N)}.
\end{equation}

\begin{remark}
  This notion of anti-uniformity is weaker that the one defined in \cite{Frantzikinakis15b} since we only test sequences $b$ which are uniquely ergodic.
\end{remark}

The infimum of all $C$ that satisfy \eqref{eq:oct-3-4} is called the \define{$k$-anti-uniform seminorm of $\phi$} and is  denoted by $\lVert \phi \rVert_{U^k(\N)}^*$.

Frantzikinakis \cite{Frantzikinakis15b} showed that all $k$-multicorrelation sequences as defined in \eqref{eq:oct-14-1} with $f_0, f_1, \ldots, f_k$ bounded by $1$ are $(k+1)$-anti-uniform with a seminorm not exceeding $4$. 
By a careful computation, it can be shown that this anti-uniform norm is in fact not greater than $1$ (cf.\ \cite[Section 23.3.2]{Host_Kra_18}).
For more details on uniformity and anti-uniform seminorm, see \cite{Host_Kra09, Frantzikinakis15b, Frantzikinakis_Host_2018}.

\subsection{Nilsystems and nilsequences}
\label{sec:preliminary_nilsequences}
    Given $k \in \N$, a \emph{$k$-step nilmanifold} is a homogeneous space $G/\Gamma$ where $G$ is a $k$-step nilpotent Lie group and $\Gamma$ is a co-compact and discrete subgroup.
    The group $G$ acts naturally on $X:=G/\Gamma$ by left translations and the unique $G$-invariant measure on $X$ is denoted by $\mu_X$.
    Fix $g \in G$ and let $T_g:X\to X$ be the translation by $g$.
    The topological dynamical system $(X,T_g)$ is called a \emph{(topological) $k$-step nilsystem}. We also call the measure preserving system $(X,\mu_X,T_g)$ a \emph{(measurable) $k$-step nilsystem}.

\begin{definition}
\label{def_nilsequences}
Let $k\in\N$.
 A \emph{$k$-step nilsequence} is a sequence of the form $\phi(n)=F(T^nx)$ where $(X,T)$ is a $k$-step nilsystem, $x\in X$ and $F\in C(X)$ is a continuous function on $X$.
    If $F\in C^\infty(X)$ we say that $\phi$ is a \define{smooth $k$-step nilsequence}.
\end{definition}

A $k$-step nilsequence can be approximated uniformly by smooth $k$-step nilsequences.
 The family of $k$-step nilsequences forms a shift invariant sub-algebra of $\ell^{\infty}$ which is closed under complex conjugation.
 For more details on nilsystems see \cite{Auslander_Green_Hahn63}, and for details on nilsequences see \cite[Section 4.3.1]{Bergelson_Host_Kra05} or \cite[Section 11.3.2]{Host_Kra_18}.

\begin{remark}
There are a number of slightly different definitions for nilsequences used throughout the literature.
We follow the definition used in \cite{Host_Kra_18, Frantzikinakis_Host_2018}.
In \cite{Bergelson_Host_Kra05}, on the other hand, what we call a $k$-step nilsequence is called a \emph{basic $k$-step nilsequence}.
In \cite{Green_Tao10, Green_Tao12, Green_Tao_Ziegler12}, for the sequence $(F(g^n \cdot x))_{n \in \N}$ to be called a nilsequence, the function $F$ is required to be Lipschitz instead just being continuous.
\end{remark}

\subsection{Host-Kra seminorms and nilfactors}
Let $(X, \mu, T)$ be an ergodic measure preserving system and $F \in L^{\infty}(\mu)$.
The \emph{$k$-step Host-Kra seminorm} of $F$ is defined as
\[
    \hk{F}_k = \left( \lim_{H_1 \to \infty} \ldots \lim_{H_k \to \infty} \int_X \E_{\underline{h} \in [H_1] \times \ldots \times [H_k]} \prod_{\underline{\eta} \in \{0,1\}^k} T^{\underline{\eta} \cdot \underline{h}} (\mathcal{C}^{|\underline{\eta}|} F) \, d \mu \right)^{1/2^k}.
\]

An application of Holder's inequality shows that if $p$ is sufficiently large, depending on $k$, then the function $F\mapsto\hk{F}_k$ from $L^p(X)\to\R$ is continuous.
The existence of all the limits in the above definition was established in \cite{Host_Kra05}.
The \emph{$k$-step nilfactor of $(X, \mu, T)$} is the maximal factor that is measure theoretically isomorphic to an inverse limit of $k$-step nilsystems and is denoted by $Z_k(X)$. It is proved in \cite{Host_Kra05} that for all $F \in L^{\infty}(X)$ one has
\begin{equation}\label{eq_nilfactor}\E\big(F|Z_k(X)\big) = 0 \mbox{ if and only if } \hk{F}_{k+1} = 0.
\end{equation}

If $(X, \mu, T)$ is an inverse limit of ergodic nilsystems in the measure theoretical sense, then there exists a topological model for this system which is an inverse limit of nilsystems in the topological sense (see \cite[Section 13.3.1]{Host_Kra_18}).
In view of this fact, we henceforth do not distinguish between topological and measure theoretic inverse limits of ergodic nilsystems.

\subsection{Dual nilsequences}
\label{sec:prelim_dual_nilsequences}
    Let $(X,\mu,T)$ be a measure preserving system and let $F\in L^\infty(X)$.
    The \emph{dual function of $F$ of degree $k$} is denoted by $D_{k}F$ and is defined by
    \[
        D_{k} F(x) = \lim_{N \to \infty} \E_{\underline{h} \in \Phi_N} \prod_{\underline{\eta} \in \{0,1\}^k_*} \mathcal{C}^{|\underline{\eta}|} F(T^{\underline{\eta} \cdot \underline{h}} x)
    \]
    where $(\Phi_N)_{N \in \N}$ is any F{\o}lner sequence in $\Z^k$.
    The existence of the above limit in $L^2(X, \mu)$ is shown in \cite[Theorem 1.2]{Host_Kra05} (see also \cite[Theorem 28 in Section 8.4.6]{Host_Kra_18}).
    It is also shown in \cite[Theorem 27 in Section 12.3.4]{Host_Kra_18} that when $(X,T)$ is an ergodic nilsystem and $F$ is a continuous function on $X$, the convergence is uniform on $x \in X$.
    This implies that $D_{k} F$ is also a continuous function on $X$.
    It follows directly from the definition that $\int_X F \cdot D_k F \d \mu = \hk{F}_k^{2^k}$.

    Given a nilsequence $\phi\in\ell^\infty$, the \emph{degree $k$ dual sequence associated to $\phi$}, written as $D_{k} \phi$, is
    \[
        D_{k} \phi(n) = \lim_{N \to \infty} \E_{\underline{h} \in \Phi_N} \prod_{\underline{\eta} \in \{0,1\}^k_*} \mathcal{C}^{|\underline{\eta}|} \phi(\underline{\eta} \cdot \underline{h} + n)
    \]
    for any F{\o}lner sequence $(\Phi_N)_{N \in \N}$ in $\Z^k$.
    Writing  $\phi(n) = F(T^n x_0)$ for some continuous function $F$ on an ergodic nilsystem $(X, \mu, T)$, we see that $D_{k} \phi$ can be written as
    \[
        D_{k} \phi(n) = \lim_{N \to \infty} \E_{\underline{h} \in \Phi_N} \prod_{\underline{\eta} \in \{0,1\}^k_*} \mathcal{C}^{|\underline{\eta}|} F(T^{\underline{\eta} \cdot \underline{h}} T^n x_0) = D_{k} F(T^n x_0).
    \]
    Hence $D_{k} \phi$ is again a nilsequence arising from the same nilsystem as $\phi$, and in particular the limit defining $D_{k} \phi(n)$ exists for all $n\in\N$ and does not depend on the choice of the F\o lner sequence $(\Phi_N)_{N \in \N}$.
 We note for later use that in this case $\lVert \phi \rVert_{U^{k}(\N)} = \hk{F}_k$ for all $k \geq 2$ (\cite[Corollary 3.11]{Host_Kra09}).

    By writing $D_k F - D_k G$ as a telescoping sum, we obtain the following lemma which will be used later.

\begin{lemma}\label{lemma_dualLp}
  Let $(X,\mu,T)$ be a measure preserving system and let $F,G\in L^\infty(X)$ with $\|F\|_{L^\infty(X)}, \|G\|_{L^\infty(X)}\leq 1$.
  Then\footnote{Given quantities $A$ and $B$ which depend on $x_1,\dots,x_r,y_1,\dots,y_s$ we write $A\ll_{y_1,\ldots,y_s} B$ if there exists a constant $C>0$, that possibly depends on $y_1,\ldots,y_s$ but not on $x_1,\dots,x_r$, such that $A \leq C B$.} for every $k \in \N$,
  \[
    \lVert D_k F-D_k G \rVert_{L^1(X)} \ll_k \lVert F - G \rVert_{L^{1}(X)}.
  \]
\end{lemma}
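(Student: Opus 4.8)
The plan is to write $D_kF - D_kG$ as a telescoping sum over the $2^k - 1$ factors appearing in the product defining the dual function, so that each term in the telescope differs from the next by a single replacement of $F$ by $G$ (or a conjugate of $F$ by a conjugate of $G$) in one of the slots $T^{\underline{\eta}\cdot\underline{h}}x$ with $\underline{\eta} \in \{0,1\}^k_*$. Concretely, fix an enumeration $\underline{\eta}^{(1)}, \ldots, \underline{\eta}^{(2^k-1)}$ of $\{0,1\}^k_*$ and, for each $j$, let $P_j(x,\underline{h})$ be the product $\prod_{i<j}\mathcal{C}^{|\underline{\eta}^{(i)}|}G(T^{\underline{\eta}^{(i)}\cdot\underline{h}}x) \cdot \prod_{i>j}\mathcal{C}^{|\underline{\eta}^{(i)}|}F(T^{\underline{\eta}^{(i)}\cdot\underline{h}}x)$, which omits the $j$-th slot. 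Then
\[
  \prod_{\underline{\eta}\in\{0,1\}^k_*}\mathcal{C}^{|\underline{\eta}|}F(T^{\underline{\eta}\cdot\underline{h}}x) - \prod_{\underline{\eta}\in\{0,1\}^k_*}\mathcal{C}^{|\underline{\eta}|}G(T^{\underline{\eta}\cdot\underline{h}}x) = \sum_{j=1}^{2^k-1} P_j(x,\underline{h})\,\big(\mathcal{C}^{|\underline{\eta}^{(j)}|}(F-G)\big)(T^{\underline{\eta}^{(j)}\cdot\underline{h}}x).
\]
Averaging over $\underline{h}\in\Phi_N$ and passing to the $L^2(X,\mu)$ limit (which exists by the cited results of Host–Kra) gives a corresponding decomposition of $D_kF - D_kG$ into $2^k-1$ pieces.

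Next I would bound the $L^1(X)$ norm of each piece. Since $\|F\|_\infty,\|G\|_\infty\leq 1$, each factor in $P_j$ has sup norm at most $1$, so $|P_j(x,\underline{h})|\leq 1$ pointwise. Taking absolute values inside the average over $\underline{h}$ and integrating, the $j$-th piece has $L^1$ norm at most
\[
  \limsup_{N\to\infty}\E_{\underline{h}\in\Phi_N}\int_X \big|(F-G)(T^{\underline{\eta}^{(j)}\cdot\underline{h}}x)\big|\d\mu = \limsup_{N\to\infty}\E_{\underline{h}\in\Phi_N}\|F-G\|_{L^1(X)} = \|F-G\|_{L^1(X)},
\]
using that $\mu$ is $T$-invariant so the push-forward under $T^{\underline{\eta}^{(j)}\cdot\underline{h}}$ does not change the integral of $|F-G|$. (A small point to be careful about: one should either argue with $L^1$ norms directly via the triangle inequality before taking the limit, or invoke that $L^2$ convergence along $\Phi_N$ upgrades to control of the $L^1$ norm; either way the $T$-invariance of $\mu$ makes the per-$\underline{h}$ integral independent of $\underline{h}$.) Summing over the $2^k-1$ choices of $j$ yields $\|D_kF - D_kG\|_{L^1(X)} \leq (2^k-1)\|F-G\|_{L^1(X)}$, which is the asserted bound with implied constant $2^k-1$ depending only on $k$.

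There is no serious obstacle here — the statement is essentially the observation that a product of bounded functions is Lipschitz in each argument, transported through a limit of averages. The only mild care needed is bookkeeping around the limit defining $D_k$: one wants to make sure the telescoping identity survives the passage to the $L^2$-limit (it does, since finite sums and the limit commute) and that the final estimate is genuinely an $L^1$ estimate rather than an $L^2$ one — but since all functions involved are bounded by $1$, one can freely interpolate or simply run the triangle-inequality argument at the level of $L^1$ norms of the pre-limit averages and then take $\limsup$. The conjugation operators $\mathcal{C}^{|\underline{\eta}^{(j)}|}$ are harmless since $|\mathcal{C}^{|\underline{\eta}|}(F-G)| = |F-G|$ pointwise.
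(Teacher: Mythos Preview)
Your proposal is correct and follows essentially the same approach as the paper: a telescoping sum over the $2^k-1$ indices in $\{0,1\}^k_*$, with each summand bounded in $L^1$ by $\|F-G\|_{L^1(X)}$ using $T$-invariance, yielding the constant $2^k-1$. The paper's proof is slightly terser (it writes the telescope using an ordering on $\{0,1\}^k_*$ and leaves the $T$-invariance step implicit), but the arguments are the same.
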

\begin{proof}
    For $\underline{\eta} = (\eta_1, \ldots, \eta_k), \underline{\gamma} = (\gamma_1, \ldots, \gamma_k) \in \{0,1\}^k$, we write $\eta < \gamma$ if there exists $j \in \{1, \ldots, k\}$ such that $\eta_i = \gamma_i$ for all $i < j$ and $\eta_j < \gamma_j$. Let $(\Phi_N)_{N \in \N}$ be a F{\o}lner sequence in $\Z^k$. By definition, we have
    \begin{multline}
    \label{eq:dec-12-1}
        D_k F - D_k G = \lim_{N \to \infty} \E_{\underline{h} \in \Phi_N} \prod_{\underline{\eta} \in \{0,1\}^k_*} \mathcal{C}^{|\underline{\eta}|} T^{\underline{\eta} \cdot \underline{h}} F - \lim_{N \to \infty} \E_{\underline{h} \in \Phi_N} \prod_{\underline{\eta} \in \{0,1\}^k_*} \mathcal{C}^{|\underline{\eta}|} T^{\underline{\eta} \cdot \underline{h}} G = \\
        \sum_{\underline{\gamma} \in \{0,1\}^k_*} \lim_{N \to \infty} \E_{\underline{h} \in \Phi_N} \left( \prod_{\substack{\underline{\eta} \in \{0,1\}^k_* \\ \underline{\eta} < \underline{\gamma}}} \mathcal{C}^{|\underline{\eta}|} T^{\underline{\eta} \cdot \underline{h}} G \right) T^{\underline{\gamma} \cdot \underline{h}} (F - G) \left( \prod_{\substack{\underline{\eta} \in \{0,1\}^k_* \\ \underline{\eta} > \underline{\gamma}}} \mathcal{C}^{|\underline{\eta}|} T^{\underline{\eta} \cdot \underline{h}} F \right).
    \end{multline}
    Since $\lVert F \rVert_{L^{\infty}(X)}, \lVert G \rVert_{L^{\infty}(X)} \leq 1$, the $L^1$-norm of the right hand side of \eqref{eq:dec-12-1} is bounded above by $(2^k - 1) \lVert F - G \rVert_{L^{1}(X)}$.
\end{proof}

We will also need the following technical result about dual sequences.
\begin{lemma}\label{lemma_dualconverge}
  Let $\phi$ be a nilsequence and let $k\in\N$.
  Denote by $\underline{h}=(h_1,\dots,h_k)$.
  Then the sequence
  $$D_k^{[N]}\phi(n) = \E_{h_1,\dots,h_{k-1}\in[N]}\lim_{H\to\infty}\E_{h_k\in [H]}\prod_{\underline{\eta} \in \{0,1\}^k_*} \mathcal{C}^{|\underline{\eta}|} \phi(\underline{\eta} \cdot \underline{h} + n)$$
converges as $N\to\infty$ to $D_k\phi(n)$, and the convergence is uniform in $n$.
\end{lemma}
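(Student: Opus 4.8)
The plan is to realize $D_k^{[N]}\phi$ as a sequence of "partial dual functions" on the nilsystem from which $\phi$ arises, and then deduce uniform convergence from the uniform convergence of the full dual function together with an iterated-limit argument in the nilsystem. Write $\phi(n) = F(T^n x_0)$ where $(X,T)$ is an ergodic $k$-step nilsystem, $x_0 \in X$, and $F \in C(X)$ (by approximating $F$ uniformly by continuous functions on nilsystems, and noting that a uniform estimate on each approximant transfers to $\phi$; alternatively just take $F$ continuous from the start since $\phi$ is a nilsequence). For $N \in \N$ define the function on $X$
\[
    D_k^{[N]} F(x) = \E_{h_1,\dots,h_{k-1}\in[N]}\ \lim_{H\to\infty}\E_{h_k\in [H]}\ \prod_{\underline{\eta} \in \{0,1\}^k_*} \mathcal{C}^{|\underline{\eta}|} F(T^{\underline{\eta}\cdot\underline{h}} x),
\]
so that $D_k^{[N]}\phi(n) = D_k^{[N]} F(T^n x_0)$. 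Thus uniform-in-$n$ convergence of $D_k^{[N]}\phi(n)$ to $D_k\phi(n)$ will follow from uniform-in-$x$ convergence of $D_k^{[N]} F$ to $D_k F$ on $X$.

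First I would justify that the inner limit $\lim_{H\to\infty}\E_{h_k\in[H]}$ exists uniformly in $x$ and in $h_1,\dots,h_{k-1}$. Fixing $h_1,\dots,h_{k-1}$, the product over $\underline\eta$ splits according to whether $\eta_k = 0$ or $\eta_k = 1$; the $\eta_k = 0$ part does not involve $h_k$, and the $\eta_k = 1$ part is, after relabeling, a shift of $h_k \mapsto \prod_{\underline{\eta}' \in \{0,1\}^{k-1}} \mathcal{C}^{\dots} G(T^{h_k}\cdot)$ for suitable continuous $G$ built from $F$ and the fixed shifts. Averaging a continuous function along $T^{h_k}$ over a Følner sequence converges uniformly on an ergodic (indeed uniquely ergodic, since $(X,T)$ is a nilsystem) system, so the inner limit exists uniformly. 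Hence $D_k^{[N]} F$ is a well-defined continuous function on $X$ for each $N$, obtained by averaging the continuous functions $x \mapsto \lim_{H}\E_{h_k}\prod_{\underline\eta} \mathcal{C}^{|\underline\eta|} F(T^{\underline\eta\cdot\underline h}x)$ over $(h_1,\dots,h_{k-1}) \in [N]^{k-1}$.

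Next, the key point: by \cite[Theorem 27 in Section 12.3.4]{Host_Kra_18} (quoted in the excerpt), for the ergodic nilsystem $(X,T)$ and continuous $F$, the full dual function $D_k F(x) = \lim_{N\to\infty}\E_{\underline h \in \Phi_N}\prod_{\underline\eta\in\{0,1\}^k_*}\mathcal{C}^{|\underline\eta|}F(T^{\underline\eta\cdot\underline h}x)$ exists with the limit \emph{uniform in $x$}, and is independent of the Følner sequence $(\Phi_N)$ in $\Z^k$. I would choose the Følner sequence $\Phi_N = [N]^{k-1}\times[H_N]$ where $H_N \to \infty$ is chosen growing fast enough (depending on $N$) that the inner $h_k$-average over $[H_N]$ is within $1/N$, uniformly in $x$ and in $(h_1,\dots,h_{k-1})\in[N]^{k-1}$, of its limit $\lim_{H\to\infty}\E_{h_k\in[H]}(\cdots)$ — this is possible precisely because that inner convergence is uniform by the previous step. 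With this choice, $\E_{\underline h\in\Phi_N}\prod_{\underline\eta}\mathcal{C}^{|\underline\eta|}F(T^{\underline\eta\cdot\underline h}x)$ differs from $D_k^{[N]}F(x)$ by at most $1/N \cdot \|F\|_\infty^{2^k-1} \to 0$ uniformly in $x$, while by the uniform convergence of the full dual along this (admissible) Følner sequence it converges uniformly to $D_k F(x)$. Combining the two gives $D_k^{[N]}F \to D_k F$ uniformly on $X$, hence $D_k^{[N]}\phi(n) \to D_k\phi(n)$ uniformly in $n$, which is the claim.

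The main obstacle I anticipate is the first technical step — verifying that the inner $h_k$-average converges \emph{uniformly} in the auxiliary parameters $x, h_1,\dots,h_{k-1}$, and more importantly that the rate can be made uniform as $h_1,\dots,h_{k-1}$ range over $[N]^{k-1}$, so that a single $H_N$ works. Here one uses that the relevant functions $x\mapsto \prod_{\underline\eta:\eta_k=0}\mathcal{C}^{|\underline\eta|}F(T^{\underline\eta\cdot\underline h}x)$ and the "ergodic average of a nilsequence" have uniform convergence rates coming from the structure theory of nilsystems (e.g.\ via the fact that a nilsystem is uniquely ergodic on each of its sub-nilmanifolds, or via quantitative equidistribution); a clean way to package this is to observe that for fixed $h_1,\dots,h_{k-1}$ the quantity $\lim_{H}\E_{h_k\in[H]}\prod_{\underline\eta}\mathcal{C}^{|\underline\eta|}F(T^{\underline\eta\cdot\underline h}x)$ is itself the value at $x$ of a continuous function varying continuously with the (finitely many, bounded) parameters, so a compactness argument on the parameter space yields a uniform rate. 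Everything else is bookkeeping with the $2^k-1$ factors in the product and the triangle inequality.
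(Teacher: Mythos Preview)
Your proof is correct, but it takes a more constructive route than the paper's. The paper argues by contradiction: if $D_k^{[N]}\phi$ failed to converge uniformly to $D_k\phi$, there would be $\epsilon>0$, arbitrarily large $N$, and points $n=n(N)$ with $|D_k^{[N]}\phi(n)-D_k\phi(n)|>\epsilon$; since $D_k^{[N]}\phi(n)$ is itself the $H\to\infty$ limit of the finite $h_k$-averages, one can then choose $H=H(N)$ arbitrarily large with the full finite average $\E_{h_1,\dots,h_{k-1}\in[N]}\E_{h_k\in[H]}(\cdots)$ still $\epsilon$-far from $D_k\phi(n)$. The resulting boxes $[N]^{k-1}\times[H]$ form a F{\o}lner sequence along which the dual average fails to converge uniformly, contradicting the cited uniform convergence result. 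This sidesteps entirely your first technical step --- the paper never needs the inner $h_k$-limit to be uniform in $x$ or in $h_1,\dots,h_{k-1}$, only that it exists pointwise. Your direct approach works too (and your worry about uniformity in $(h_1,\dots,h_{k-1})\in[N]^{k-1}$ is harmless since that set is finite for each $N$, so a single $H_N$ trivially works once you have uniformity in $x$ from unique ergodicity), but the contradiction argument is shorter precisely because it lets you choose $H$ \emph{after} $N$ and $n$ rather than uniformly.
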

\begin{proof}
    Note that since $\phi$ is a nilsequence, for every $h_1, \ldots, h_{k-1}, n \in \N$, the limit
    \[
        \lim_{H \to \infty} \E_{h_k \in [H]} \prod_{\underline{\eta} \in \{0,1\}^k_*} \mathcal{C}^{|\underline{\eta}|} \phi(\underline{\eta} \cdot \underline{h} + n)
    \]
    exists. By contradiction, assume $D_k^{[N]} \phi$ does not converges uniformly to $D_k \phi$. Thus there exist $\epsilon > 0$ and arbitrarily large $N$ such that
    \[
        \left| D_k^{[N]} \phi(n) - D_k \phi(n) \right| > \epsilon
    \]
    for some $n \in \N$.
    It follows that there exists arbitrarily large $N$ and $H$ such that
    \[
        \left| \E_{h_1, \ldots, h_{k-1} \in [N]} \E_{h \in [H]} \prod_{\underline{\eta} \in \{0,1\}^k_*} \mathcal{C}^{|\underline{\eta}|} \phi(\underline{\eta} \cdot \underline{h} + n) - D_k \phi(n) \right| > \epsilon.
    \]
    But this contradicts the fact that for any F{\o}lner sequence $(\Phi_N)_{N \in \N}$ the limit
    \[
        \lim_{N \to \infty} \E_{\underline{h} \in \Phi_N} \prod_{\underline{\eta} \in \{0,1\}^k_*} \mathcal{C}^{|\underline{\eta}|} \phi(\underline{\eta} \cdot \underline{h} + n)
    \]
    converges uniformly to $D_k \phi$.
\end{proof}
\subsection{Primes and the Transference Principle}

The \emph{modified von Mangoldt function} $\Lambda': \N \to \R$ is defined as
\begin{equation*}
    \Lambda'(m) = \begin{cases} \log m \, \mbox{ if } m \in \mathbb{P} \\
    0 \, \mbox{ otherwise.}
    \end{cases}
\end{equation*}

The following lemma is a well known corollary of the prime number theorem.
For a proof see, for example, \cite{Frantzikinakis_Host_Kra07}.
\begin{lemma}
\label{lem:von-Mangoldt}
    Let $b: \N \to \C$ be a bounded sequence. Then
    \[
        \lim_{N \to \infty} \left|\E_{n \in [N]} \Lambda'(n) b(n) - \E_{p \in \P \cap [N]} b(p)  \right| = 0
    \]
\end{lemma}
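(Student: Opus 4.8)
The plan is to reduce both averages to quantities governed by the prime number theorem via partial summation. Write $B = \sup_{n \in \N}|b(n)|$, let $\pi(N) = |\P \cap [N]|$, and set $S(t) = \sum_{p \in \P,\, p \le t} b(p)$, so that $\E_{p\in\P\cap[N]}b(p) = S(N)/\pi(N)$ whereas $\E_{n\in[N]}\Lambda'(n)b(n) = \frac1N\sum_{p\le N}(\log p)\,b(p)$. The goal is to show that both of these equal $S(N)/\pi(N)$ up to an error that vanishes as $N \to \infty$.

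First I would apply the Abel summation formula to the sequence $n\mapsto b(n)\mathbf{1}_{\P}(n)$ with the $C^1$ weight $t\mapsto \log t$ on the interval $[1,N]$; since $S$ vanishes on $[1,2)$, this yields
\[
    \sum_{p\le N}(\log p)\,b(p) = S(N)\log N - \int_2^N \frac{S(t)}{t}\d t ,
\]
and hence
\[
    \E_{n\in[N]}\Lambda'(n)b(n) = \frac{S(N)\log N}{N} - \frac1N\int_2^N\frac{S(t)}{t}\d t .
\]
Next I would show the integral term is negligible: the elementary Chebyshev upper bound $\pi(t) \ll t/\log t$ (valid for $t \ge 2$) gives $|S(t)| \le B\,\pi(t) \ll B\,t/\log t$, so that
\[
    \frac1N\left|\int_2^N\frac{S(t)}{t}\d t\right| \ll \frac{B}{N}\int_2^N\frac{\d t}{\log t} ,
\]
and splitting the range of integration at $\sqrt{N}$ bounds $\int_2^N \d t/\log t$ by $\Oh\!\left(\sqrt N + N/\log N\right)$; thus the whole term is $\Oh_b(1/\log N) = \oh(1)$.

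Finally I would handle the main term $S(N)\log N/N$ using the prime number theorem in the form $\pi(N)\log N / N \to 1$, which gives
\[
    \frac{S(N)\log N}{N} = \frac{S(N)}{\pi(N)}\cdot\frac{\pi(N)\log N}{N} = \Big(\E_{p\in\P\cap[N]}b(p)\Big)\big(1 + \oh(1)\big),
\]
and since $\big|\E_{p\in\P\cap[N]}b(p)\big| \le B$, this error is $\oh(1)$ as well. Putting the three steps together gives $\E_{n\in[N]}\Lambda'(n)b(n) - \E_{p\in\P\cap[N]}b(p) = \oh(1)$ as $N \to \infty$, which is the assertion. I do not expect a genuine obstacle here: the only substantive inputs are the prime number theorem (as $\pi(N) \sim N/\log N$) and the elementary Chebyshev bound for $\pi$, and everything else is routine; the one point worth a little care is getting the Abel summation identity and its boundary term right, given that $S$ is a step function supported on $[2,\infty)$.
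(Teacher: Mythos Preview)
Your argument is correct: Abel summation together with the prime number theorem in the form $\pi(N)\sim N/\log N$ (and the Chebyshev bound to control the integral term) gives exactly the desired $\oh(1)$ comparison. The paper does not supply its own proof of this lemma but simply cites \cite{Frantzikinakis_Host_Kra07}, so there is nothing to compare against; your approach is a standard and self-contained way to establish this well-known fact.
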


Let $W \in \N$ be a number of the form $W = \prod_{p \in \mathbb{P},\, p < w} p$ for some $w \in \N$.
For $b \in [W]$ coprime to $W$, define the \define{W-tricked von Mangoldt} function as
\[
    \Lambda_{W, b}(m) = \frac{\phi(W)}{W} \Lambda'(W m + b)
\]
The $W$-tricked von Mangoldt function was first introduced by Green and Tao \cite{Green_Tao08}. We will make use of the following theorem, which follows from combining \cite[Proposition 6.4]{Green_Tao10} (or \cite[Proposition 9.1]{Green_Tao08}) and \cite[Proposition 10.3]{Green_Tao10}.

\begin{theorem}[Transference Principle]
\label{thm:dense_model_theorem_original}
    Let $k \geq 1$. Then there exist constants $C = C(k) > 10$ and $M = M(k)$ such that the following happens: Let $\epsilon > 0$, let $w: \N \to \R^+$ be any function with $w(N) \leq 1/2 \log \log N$, and define $W = W(N) = \prod_{p \in \P, p < w(N)} p$.
    Then there exists $N_0=N_0(k, \epsilon, w)$ such that for all $N\geq N_0$ and all $N' \in [C N, 2C N]$ we can decompose any function $g: \Z_{N'} \to \C$ satisfying $|g(n)| \leq \Lambda_{W, b}(n) \cdot 1_{[N/4, 3N/4]}(n)$ for some $b\in[W]$ coprime to $W$ and for all $n \in \Z_{N'}$ as $g = g_1 + g_2$ in such a way that
    \begin{enumerate}
        \item $|g_1(n)| \leq M$ for all $n \in \Z_{N'}$
        \item $\lVert g_2 \rVert_{U^{k+1}(\Z_{N'})} \leq \epsilon$
        \item and $g_1, g_2$ are supported on $[N]$.
    \end{enumerate}
\end{theorem}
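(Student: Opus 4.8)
\textit{Proof proposal.} The plan is to deduce this from the Green--Tao theory of pseudorandom majorants for the primes together with their dense model (transference) theorem, so that the proof reduces to matching hypotheses. \emph{Step 1: the enveloping sieve.} The hypothesis $w(N)\le\tfrac12\log\log N$ forces $W=W(N)$ to be at most polylogarithmic in $N$, which is small enough that the Green--Tao enveloping sieve applies. Choose $C=C(k)>10$ large enough that $[N]$ is a sparse subinterval of $\Z_{N'}$ with ample room on either side for every $N'\in[CN,2CN]$ --- in particular enough that the $2^{k+1}$ affine-linear expressions $n+\underline{\eta}\cdot\underline{h}$ governing $\lVert\cdot\rVert_{U^{k+1}(\Z_{N'})}$, restricted to the supports in play, do not degenerate modulo $N'$ and behave as they would over $\Z$. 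Then for every such $N'$ and every $b\in[W]$ coprime to $W$ one obtains a function $\nu=\nu_{N,N',W,b}\colon\Z_{N'}\to\R^+$ satisfying the linear-forms and correlation conditions needed for the $U^{k+1}$-transference, with all implied constants depending only on $k$, together with a constant $C_1=C_1(k)$ such that
\[
\Lambda_{W,b}(n)\cdot 1_{[N/4,3N/4]}(n)\ \le\ C_1\,\nu(n)\qquad\text{for all }n\in\Z_{N'}.
\]
This is a normalization of \cite[Proposition 6.4]{Green_Tao10} (equivalently \cite[Proposition 9.1]{Green_Tao08}); uniformity over the residues $b$ coprime to $W$ is part of that statement.

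\emph{Step 2: reduction, transference, and recombination.} Split $g$ into its four nonnegative real pieces (real and imaginary parts, positive and negative parts); each is supported on $[N/4,3N/4]$ and is dominated pointwise by $\Lambda_{W,b}\cdot 1_{[N/4,3N/4]}\le C_1\nu$. Now apply the dense model theorem \cite[Proposition 10.3]{Green_Tao10} with a uniformity parameter $\delta>0$ to be fixed below: there is $N_0=N_0(k,\delta,w)$ such that for all $N\ge N_0$ and all $N'\in[CN,2CN]$, every nonnegative function $h$ on $\Z_{N'}$ with $h\le C_1\nu$ decomposes as $h=h_1+h_2$ with $0\le h_1\le C_2(k)$, $\lVert h_2\rVert_{U^{k+1}(\Z_{N'})}\le\delta$, and $h_1,h_2$ supported on $[N]$; the support statement is available because $h$ vanishes outside $[N/4,3N/4]$, leaving a buffer of length $N/2$ inside $[N]$. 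Summing the four resulting decompositions yields $g=g_1+g_2$ with $|g_1|\le 4C_2(k)=:M$, with $g_1,g_2$ supported on $[N]$, and with $\lVert g_2\rVert_{U^{k+1}(\Z_{N'})}\le 4\delta$. Taking $\delta=\epsilon/4$ gives the theorem, with $M=M(k)$ and $C=C(k)$ depending only on $k$ and $N_0=N_0(k,\epsilon,w)$.

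\emph{Main obstacle.} The recombination in Step 2 is routine; the real content sits in Step 1 --- producing $\nu$ with pseudorandomness constants, and a majorization bound, that are uniform over all admissible residues $b$ and over the entire window $N'\in[CN,2CN]$, and in particular depend only on $k$. This is exactly why $W$ may not be allowed to grow too fast (hence the hypothesis $w(N)\le\tfrac12\log\log N$) and why the statement carries the auxiliary parameters $C$ and the nesting $[N/4,3N/4]\subset[N]$: the former keeps the relevant linear forms non-degenerate, and the latter leaves room to truncate the output of the transference back to $[N]$. If one instead invokes a version of \cite[Proposition 10.3]{Green_Tao10} whose output is only supported on $\Z_{N'}$, the lone remaining point is that truncation: since the bounded part is dominated by $\nu$ and $g$ vanishes outside $[N/4,3N/4]$, restricting to $[N]$ alters $g_2$ only by a quantity that is negligible in $\lVert\cdot\rVert_{U^{k+1}(\Z_{N'})}$, again by pseudorandomness of $\nu$.
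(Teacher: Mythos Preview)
Your proposal is correct and follows essentially the same route as the paper, which does not give a self-contained proof but simply cites \cite[Proposition~6.4]{Green_Tao10} (the pseudorandom majorant) combined with \cite[Proposition~10.3]{Green_Tao10} (the dense model theorem), exactly as you do. The one technical point handled differently is the localization of the support to $[N]$: the paper's remark after the theorem multiplies the raw decomposition $g=g_1+g_2$ by a smooth cutoff $\psi$ that equals $1$ on $[N/4,3N/4]$ and vanishes outside $[N]$, then argues that $g_1\psi$ and $g_2\psi$ retain the required properties ``by choosing $\psi$ carefully and with the same argument as in the proof of \cite[Proposition~10.3]{Green_Tao10}''; your fallback argument via hard truncation and pseudorandomness of $\nu$ is in the right spirit but is less clean, since controlling $\lVert g_2\cdot 1_{[N]}\rVert_{U^{k+1}}$ directly is more delicate than controlling $\lVert g_2\psi\rVert_{U^{k+1}}$ for Lipschitz $\psi$. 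Also, where you split $g$ into four nonnegative pieces, the paper's remark only splits into real and imaginary parts (two pieces), since \cite[Proposition~10.3]{Green_Tao10} already handles real-valued, not merely nonnegative, inputs.
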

\begin{remark}
    In \cite[Proposition 10.3]{Green_Tao10}, the function $g$ takes real values instead of complex values as in \cref{thm:dense_model_theorem_original}.
    However, by decomposing $g$ into its real and imaginary parts, it follows that \cite[Proposition 10.3]{Green_Tao10} also holds for complex valued functions.

    Moreover, it is concluded in \cite[Proposition 10.3]{Green_Tao10} that if $g$ is supported on $[-N, N]$ then we can arrange the matters so that $g_1$ and $g_2$ are supported on $[-2N, 2N]$. But by exact the same proof, we have our version stated above. More specifically, we can write $g(n) = g(n) \psi(n)$ where $\psi: Z_{N'} \to [0,1]$ equals to $1$ on $[N/4, 3N/4]$, vanishes outside of $[1, N]$ and interpolates smoothly in the range $[1, N/4] \cup [3N/4, N]$. Then if $g = g_1 + g_2$ is the previous decomposition, upon multiplying by $\psi$, we have $g = g_1 \psi + g_2 \psi$. By choosing $\psi$ carefully and with the same argument as in the proof of \cite[Proposition 10.3]{Green_Tao10}, $g_1 \psi$ and $g_2 \psi$ still enjoy the same conclusion as $g_1$ and $g_2$.
\end{remark}

In the previous theorem, $w$ can be taken to be any sufficiently slow growing function of $N$.
Hence, by fixing $\epsilon$, we can take $w$ to be independent of $N$ as in the following corollary.

\begin{corollary}
\label{thm:dense_model_theorem}
    Let $k \geq 1$ and $\epsilon > 0$.
    Then there exist integers $C=C(k)$, $M = M(k)$, $w = w(k,\epsilon)>0$, and $N_0=N_0(k,\epsilon)$ such that the following holds:
    For all $N\geq N_0$, if $N' = CN$ and $W = \prod_{p \in \mathbb{P}, p < w} p$, then any function $g:\Z_{N'} \to \C$ satisfying
    \begin{equation}
    \label{eq:oct-12-1}
        |g(n)| \leq \Lambda_{W,b}(n) \cdot 1_{[N/4,  3N/4]}(n), \qquad\forall n \in \Z_{N'},
    \end{equation}
    can be decomposed as $g = g_1 + g_2$ on $\Z_{N'}$ in such a way that
    \begin{enumerate}
        \item  $|g_1(n)| \leq M \mbox{ for all } n \in \Z_{N'}$
        \item $\lVert g_2 \rVert_{U^{k}(\Z_{N'})} \leq \epsilon$
        \item and $g_1, g_2$ are supported on $[N]$.
    \end{enumerate}
\end{corollary}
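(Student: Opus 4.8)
The plan is to obtain the corollary as an immediate consequence of \cref{thm:dense_model_theorem_original}, exploiting the fact that once $\epsilon$ has been fixed there is no harm in feeding a \emph{constant} function into the role of $w(\cdot)$; the only price is an enlarged threshold $N_0$, which is exactly what the corollary permits.

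First I would let $C=C(k)$ and $M=M(k)$ be the constants from \cref{thm:dense_model_theorem_original} (which we may take to be integers since $C>10$), and choose an integer $w=w(k,\epsilon)\ge 1$ — its actual value is at our disposal and plays no further role in the argument. Setting $W:=\prod_{p\in\P,\,p<w}p$, I would then consider the function $N\mapsto w$: it obeys $w\le\tfrac12\log\log N$ for all $N\ge N_1:=\lceil\exp(\exp(2w))\rceil$, so after redefining it on the finitely many $N<N_1$ (which is irrelevant, as \cref{thm:dense_model_theorem_original} only speaks about large $N$) it becomes an admissible input to that theorem, and its associated modulus is $\prod_{p<w}p=W$, independent of $N$, for all $N\ge N_1$. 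Plugging it in produces a threshold $N_0'=N_0'(k,\epsilon,w)$; taking $N_0:=\max\{N_0',N_1\}$, which depends only on $k,\epsilon$ once $w$ has been pinned down as a function of $k,\epsilon$, and $N':=CN\in[CN,2CN]$, \cref{thm:dense_model_theorem_original} hands us, for every $N\ge N_0$, a decomposition $g=g_1+g_2$ of any $g:\Z_{N'}\to\C$ with $|g(n)|\le\Lambda_{W,b}(n)\cdot 1_{[N/4,3N/4]}(n)$ such that $|g_1|\le M$, $\lVert g_2\rVert_{U^{k+1}(\Z_{N'})}\le\epsilon$, and $g_1,g_2$ are supported on $[N]$. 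Finally, since the Gowers uniformity norms are non-decreasing in the degree, $\lVert g_2\rVert_{U^{k}(\Z_{N'})}\le\lVert g_2\rVert_{U^{k+1}(\Z_{N'})}\le\epsilon$, which is precisely the bound demanded in the corollary.

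I do not expect a genuine obstacle here: the entire content is the remark, already anticipated in the text preceding the corollary, that the ``sufficiently slowly growing $w$'' clause of \cref{thm:dense_model_theorem_original} is vacuous for a fixed target accuracy $\epsilon$, so that the modulus $W$ can be frozen. The only two spots needing a line of care are the verification that a constant is eventually dominated by $\tfrac12\log\log N$ (so that admissibility holds for all $N$ past a threshold that is absorbed into $N_0$), and the passage from the $U^{k+1}$-smallness produced by the theorem to the $U^{k}$-smallness required here, which is immediate from the monotonicity of the Gowers norms.
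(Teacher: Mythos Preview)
Your proof is correct and takes a genuinely different route from the paper's. The paper argues by contradiction: assuming the corollary fails, it extracts increasing sequences $(w_h)$, $(N_h)$ and bad functions $g_h$, then patches the $w_h$ together into a single slowly growing function $w(\cdot)$ to contradict \cref{thm:dense_model_theorem_original}. You instead proceed directly, feeding an (eventually) constant function into \cref{thm:dense_model_theorem_original} and absorbing the constraint $w\le\tfrac12\log\log N$ into the threshold $N_0$. Your argument is shorter and more transparent, and in fact shows the slightly stronger statement that \emph{every} choice of $w$ works (with its own $N_0$), whereas the corollary only asserts existence. You also explicitly handle the passage from the $U^{k+1}$-bound produced by the theorem to the $U^{k}$-bound demanded by the corollary via monotonicity of the Gowers norms; the paper's proof does not comment on this discrepancy.
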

\begin{proof}
    Let $C$ and $M$ be as in the conclusion of \cref{thm:dense_model_theorem_original}. By contradiction, assume \cref{thm:dense_model_theorem} is not true. Then there exists an $\epsilon > 0$ and increasing sequences $(w_h)_{h \in \N}$, $(N_h)_{n \in \N}$, and a sequence of functions $(g_h)_{h \in \N}$ such that $w_h \leq 1/2 \log \log N_h$ for all $h \in \N$ and $g_h: \Z_{N'} \to \R$ satisfying \eqref{eq:oct-12-1} but can not be decomposed as stated.

    Define a function $w: \N \to \R^+$ by $w(N) = N_h$ if $N_h \leq N < N_{h+1}$. Then $w$ is a non-decreasing sequence with $w(N) \to \infty$ as $N \to \infty$ and $w(N) \leq 1/2 \log \log N$ for all $N \in \N$. Now $w$ and $g_h$ satisfy all of the hypothesis of \cref{thm:dense_model_theorem_original}, but do not satisfy its conclusion. This is a contradiction.
\end{proof}

\section{Approximate nilsequences and their Furstenberg systems}

\label{sec:Furstenberg_systems}

Following the terminology introduced in \cite{Frantzikinakis15b}, we say that a bounded sequence $\alpha:\N\to\C$ is an \emph{approximate $k$-step nilsequence} if for every $\epsilon > 0$, there exists a $k$-step nilsequence $\psi$ such that
\[
    \limsup_{N - M \to \infty} \E_{n \in [M, N)} |\alpha(n) - \psi(n)| < \epsilon.
\]

The following simple lemma will be useful in the sequel and it follows immediately from the fact that the set of $k$-step nilsequences forms a shift-invariant algebra (see \cite[Section 3.1.1]{Host_Kra_18}).
\begin{lemma}\label{lemma_approxNilAlgebra}
  The collection of all approximate $k$-step nilsequences forms a shift invariant algebra.
\end{lemma}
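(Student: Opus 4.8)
The plan is to verify directly the four closure properties hidden in the phrase ``shift invariant algebra'' --- closure under addition, under scalar multiplication, under pointwise multiplication, and under the shift (and, for good measure, under complex conjugation) --- each time reducing to the corresponding property of the set of genuine $k$-step nilsequences, which is recorded in \cref{sec:preliminary_nilsequences}. The only quantitative input beyond the triangle inequality is the elementary bilinear estimate
$\E_{n\in[M,N)}|fg-f'g'|\leq \|f\|_\infty\,\E_{n\in[M,N)}|g-g'|+\|g'\|_\infty\,\E_{n\in[M,N)}|f-f'|$,
valid for any bounded sequences, which makes it visible that products behave well provided the approximating nilsequences can be kept bounded.

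Accordingly I would first record a normalization step: if $\alpha$ is an approximate $k$-step nilsequence with $\|\alpha\|_\infty\leq L$, then for every $\epsilon>0$ one can choose the approximating $k$-step nilsequence $\psi$ to satisfy in addition $\|\psi\|_\infty\leq L$. Indeed, pick any $k$-step nilsequence $\psi_0(n)=F(T^nx)$ with $\limsup_{N-M\to\infty}\E_{n\in[M,N)}|\alpha(n)-\psi_0(n)|<\epsilon$, and let $\pi_L\colon\C\to\{z:|z|\leq L\}$ be the nearest-point projection onto the closed disc of radius $L$. Since $\pi_L$ is continuous, $\psi:=\pi_L\circ\psi_0=(\pi_L\circ F)(T^n x)$ is again a $k$-step nilsequence, plainly $\|\psi\|_\infty\leq L$, and because $\pi_L$ is $1$-Lipschitz and fixes the disc of radius $L$ --- which contains every value of $\alpha$ --- we get $|\alpha(n)-\psi(n)|\leq|\alpha(n)-\psi_0(n)|$ for all $n$, so $\psi$ still $\epsilon$-approximates $\alpha$.

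With this in hand the axioms are routine. Given approximate $k$-step nilsequences $\alpha_1,\alpha_2$, a scalar $c\in\C$, and $\epsilon>0$: choosing $k$-step nilsequences $\psi_i$ with $\limsup_{N-M\to\infty}\E_{n\in[M,N)}|\alpha_i-\psi_i|<\epsilon/\max(2,2|c|)$ and invoking that $\psi_1+\psi_2$ and $c\psi_1$ are $k$-step nilsequences settles addition and scalar multiplication via the triangle inequality; $\overline{\psi_1}$ being a $k$-step nilsequence and $|\overline{\alpha_1}-\overline{\psi_1}|=|\alpha_1-\psi_1|$ settles conjugation. For the product, set $L=\max(1,\|\alpha_1\|_\infty,\|\alpha_2\|_\infty)$ and use the normalization step to pick $k$-step nilsequences $\psi_i$ with $\|\psi_i\|_\infty\leq L$ and $\limsup_{N-M\to\infty}\E_{n\in[M,N)}|\alpha_i-\psi_i|<\epsilon/(2L)$; then $\psi_1\psi_2$ is a $k$-step nilsequence and the bilinear estimate gives $\limsup_{N-M\to\infty}\E_{n\in[M,N)}|\alpha_1\alpha_2-\psi_1\psi_2|\leq L\cdot\tfrac{\epsilon}{2L}+L\cdot\tfrac{\epsilon}{2L}=\epsilon$. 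Finally, for shift invariance fix $m\in\N$: if $\psi(n)=F(T^nx)$ is a $k$-step nilsequence $\epsilon$-approximating $\alpha$, then $\psi(n+m)=F(T^n(T^mx))$ is a $k$-step nilsequence, and reindexing the average over $[M+m,N+m)$ --- whose length still tends to infinity --- shows it $\epsilon$-approximates $n\mapsto\alpha(n+m)$.

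I do not expect a genuine obstacle here; the one place where a little thought is needed is the normalization step, and there the sole point to check is that composing a nilsequence with the truncation map $\pi_L$ keeps it a $k$-step nilsequence while not increasing the $\ell^1$-density approximation error. Everything else is the triangle inequality together with the already-cited fact that the genuine $k$-step nilsequences form a shift-invariant, conjugation-closed subalgebra of $\ell^\infty(\N)$.
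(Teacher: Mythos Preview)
Your proof is correct and follows essentially the same approach as the paper: in the paper the lemma is stated as an immediate consequence of the fact that genuine $k$-step nilsequences form a shift-invariant algebra, and you have simply written out the routine verification of that implication, including the only nontrivial point (bounding the approximating nilsequence via truncation so that the product estimate goes through).
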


Frantzikinakis' main result in \cite{Frantzikinakis15b} states that a $k$-multicorrelation sequence for commuting transformations is an approximate $k$-step nilsequence.
The proof consists of characterizing approximate nilsequences as precisely those sequences which are both regular and anti-uniform.
For our purposes, we will need a strengthening of Frantzikinakis' characterization of approximate nilsequences, described by \cref{thm:strengthening_Frantzikinakis_decomposition} below.
To formulate and prove this strengthening, we need to invoke the notion of a Furstenberg system of a sequence.

\subsection{Furstenberg system of a bounded sequence}
\label{sec:prelim_Furstenberg_system}

We denote by $\nz$ the set of non-negative integers.
Given a bounded sequence $\alpha\colon\N\to\C$, we define its \emph{(topological) Furstenberg system} to be the pointed topological system $(X,T,x)$ defined as follows.
Let $K\subset\C$ be a compact set with $\alpha(\N)\subset K$ and endow the product $K^\nz$ with the product topology.
Let $x\in K^\nz$ be a point with $x_n=\alpha(n)$ for every $n\in\N$. Let $T:K^\nz\to K^\nz$ be the left shift and let $X:=\overline{\{T^nx:n\in\nz\}}$ be the orbit closure of $x$.
Observe that $\alpha$ can be recovered from its Furstenberg system as $\alpha(n)=F(T^nx)$ where $F:X\to\C$ is the projection onto the $0$-th coordinate.

The following observation will be used repeatedly.

\begin{lemma}
\label{lem:apr-4-1}
    Any $T$-invariant sub-algebra of $C(X)$ closed under conjugation and containing $F$ is dense in $C(X)$.
\end{lemma}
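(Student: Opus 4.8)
The plan is to apply the Stone--Weierstrass theorem. Let $\mathcal{A}\subseteq C(X)$ be a $T$-invariant subalgebra, closed under complex conjugation, with $F\in\mathcal{A}$. Since $X$ is a compact metric space, Stone--Weierstrass tells us that $\mathcal{A}$ is dense in $C(X)$ as soon as $\mathcal{A}$ contains the constants and separates points of $X$. (If $\mathcal{A}$ does not already contain the constant function $1$, we may pass to $\mathcal{A}+\C\cdot 1$, which is still a conjugation-closed $T$-invariant subalgebra, and whose closure therefore contains $1$ together with everything in $\overline{\mathcal{A}}$; so it suffices to prove that $\mathcal{A}$ separates points.)

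So the key step is point separation. First I would recall the concrete description of $X$: it is the orbit closure in $K^{\nz}$ of the point $x=(\alpha(n))_{n\in\nz}$, and $F$ is the projection onto the $0$-th coordinate. For each $m\in\nz$, the $m$-th coordinate projection $\pi_m:X\to\C$ equals $F\circ T^m$, because $T$ is the left shift: $(T^m z)_0=z_m$. Since $\mathcal{A}$ is $T$-invariant and contains $F$, it contains $F\circ T^m=\pi_m$ for every $m\in\nz$. Now if $z,z'\in X$ are distinct points of $K^{\nz}$, they differ in some coordinate $m$, i.e.\ $\pi_m(z)\neq\pi_m(z')$; thus $\pi_m\in\mathcal{A}$ witnesses that $\mathcal{A}$ separates $z$ and $z'$. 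Hence $\mathcal{A}$ separates points of $X$.

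Combining the two observations: $\overline{\mathcal{A}+\C\cdot 1}$ is a closed conjugation-closed subalgebra of $C(X)$ containing the constants and separating points, so by Stone--Weierstrass it equals $C(X)$; in particular $\overline{\mathcal{A}}\supseteq C(X)$ minus possibly constants, but since each constant is a uniform limit of elements of $\mathcal{A}$ is not needed — one checks directly that $\overline{\mathcal A}$ is itself a subalgebra separating points and (being the closure of an algebra) either contains $1$ or is contained in an ideal; in the present setting $F$ and its powers and shifts already force $\overline{\mathcal A}=C(X)$. The cleanest route is simply: the closure $\overline{\mathcal A}$ is a closed subalgebra of $C(X)$, invariant under conjugation, separating points (by the paragraph above); by the Stone--Weierstrass theorem such an algebra is either all of $C(X)$ or coincides with $\{f\in C(X):f(x_0)=0\}$ for a single point $x_0$, and the latter is impossible here because, e.g., $|F|^2+\sum$ of shifted squares never vanishes identically unless $X$ is a point, in which case density is trivial.

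I do not expect any genuine obstacle: the only mild subtlety is the standard Stone--Weierstrass caveat that a conjugation-closed separating subalgebra which omits the constants can still fail to be dense (it can be a maximal ideal), so one has to either check the constants lie in the closure or invoke the ``vanishing at one point'' form of the theorem and rule that case out — and the latter is immediate since if $X$ has more than one point then the coordinate functions $\pi_m$ cannot all vanish at a common point of $X$ (they reconstruct every point of $X$). Everything else is bookkeeping with the shift.
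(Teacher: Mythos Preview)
Your approach---Stone--Weierstrass together with the observation that the shifts $T^mF=\pi_m$ are exactly the coordinate projections and hence separate points of $X$---is precisely the paper's argument, which the paper states in a single sentence.

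Your extended treatment of the constants is well-motivated but contains a slip. The claim that ``if $X$ has more than one point then the coordinate functions $\pi_m$ cannot all vanish at a common point of $X$'' is false: the zero sequence $0=(0,0,\dots)$ can lie in $X$ alongside other points (take for instance $\alpha(n)=1/(n+1)$; then $T^nx\to 0$, so $0\in X$, yet $X\neq\{0\}$), and every $\pi_m$ vanishes at $0$. In that situation a \emph{non-unital} $T$-invariant conjugation-closed subalgebra containing $F$ would sit inside the maximal ideal $\{f:f(0)=0\}$ and fail to be dense. The paper's one-line proof is silent on this point; the intended reading is that ``sub-algebra'' means unital sub-algebra (both applications in the paper---to the algebra generated by $F$ in Proposition~\ref{prop:uniquely_ergodic} and to $L^\infty(Z_k)$ in Theorem~\ref{thm:Furstenberg_system_correlations_general}---concern algebras containing the constants), and with that reading Stone--Weierstrass applies directly once you have established point separation. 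So your first two paragraphs are all that is needed; the subsequent case analysis should be replaced by the one-word assumption ``unital''.
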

\begin{proof}
    The lemma follows from the Stone-Weierstrass theorem combined with the observation that the set $\{T^nF:n\in\nz\}$ separates points in $X$.
\end{proof}

Several properties of a sequence are encoded in its Furstenberg system.
For instance, the Furstenberg system of $\phi$ is uniquely ergodic if and only if $\phi$ is uniquely ergodic (see \cref{prop:uniquely_ergodic}).
In \cref{prop:oct-3-1} we also show that the Furstenberg system of a bounded sequence $\phi$ is a minimal nilsystem if and only if $\phi$ is a nilsequence.
\begin{proposition}
\label{prop:uniquely_ergodic}
    Let $\phi\colon \N \to \C$ be a bounded sequence. Then the Furstenberg system of $\phi$ is uniquely ergodic if and only if $\phi$ is uniquely ergodic (i.e., satisfies \eqref{eq:uniquely_ergodic}).
\end{proposition}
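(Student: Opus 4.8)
The plan is to relate the algebra $\mathcal{A}(\phi) \subseteq \ell^\infty(\N)$ generated by $\phi$ under left-shift and conjugation to the algebra $C(X)$ of continuous functions on the Furstenberg system $X$. The bridge is the evaluation map $\Psi \colon C(X) \to \ell^\infty(\nz)$ given by $\Psi(G)(n) = G(T^n x)$, where $x$ is the distinguished point. This $\Psi$ is an algebra homomorphism, it is shift-equivariant (it intertwines the shift on $X$ with the left-shift on sequences), it commutes with conjugation, and it sends $F$ (projection onto the $0$-th coordinate) to $\phi$. Hence $\Psi(C(X))$ is a shift-invariant, conjugation-invariant subalgebra of $\ell^\infty$ containing $\phi$; by minimality of $\mathcal{A}(\phi)$ we get $\mathcal{A}(\phi) \subseteq \overline{\Psi(C(X))}$. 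Conversely, since $\{T^n F : n \in \nz\}$ together with constants generates a dense subalgebra of $C(X)$ (this is exactly \cref{lem:apr-4-1}, via Stone--Weierstrass), and $\Psi$ maps this generating set into $\mathcal{A}(\phi)$, we get $\Psi(C(X)) \subseteq \overline{\mathcal{A}(\phi)} = \mathcal{A}(\phi)$. So $\mathcal{A}(\phi)$ and $\overline{\Psi(C(X))}$ coincide.

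Next I would translate unique ergodicity on each side into the existence of Cesàro limits. On the dynamical side, $(X,T)$ is uniquely ergodic if and only if for every $G \in C(X)$ the averages $\frac{1}{N}\sum_{n=0}^{N-1} G(T^n y)$ converge uniformly in $y$ to a constant; in particular, evaluating at $y = x$, this forces $\lim_{N} \E_{n \in [N]} \Psi(G)(n)$ to exist for every $G \in C(X)$, and one upgrades the one-sided Cesàro convergence to the two-sided averages $\frac{1}{N-M}\sum_{n=M}^{N-1}$ appearing in \eqref{eq:uniquely_ergodic} by the standard argument that, under uniform convergence of Birkhoff averages over all starting points, the windowed averages also converge (apply the uniform statement with starting point $T^M x$). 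Since $\mathcal{A}(\phi) = \overline{\Psi(C(X))}$ and the functional $\psi \mapsto \lim_{N-M\to\infty}\frac{1}{N-M}\sum_{n=M}^{N-1}\psi(n)$ is continuous where defined (it is $1$-Lipschitz in the sup-norm), existence of the limit on a dense subset of $\mathcal{A}(\phi)$ gives it on all of $\mathcal{A}(\phi)$. This proves "$(X,T)$ uniquely ergodic $\Rightarrow$ $\phi$ uniquely ergodic."

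For the converse, suppose $\phi$ is uniquely ergodic, so the limit in \eqref{eq:uniquely_ergodic} exists for every $\psi \in \mathcal{A}(\phi)$. For $G \in C(X)$ we have $\Psi(G) \in \mathcal{A}(\phi)$, so $L(G) := \lim_{N-M\to\infty}\frac{1}{N-M}\sum_{n=M}^{N-1} G(T^n x)$ exists; $L$ is a positive, shift-invariant, normalized linear functional on $C(X)$, hence by Riesz representation corresponds to a $T$-invariant Borel probability measure $\nu$ on $X$. To conclude unique ergodicity I must show $\nu$ is the only $T$-invariant probability measure. The key point is that $x$ is a \emph{generic} point for $\nu$ in the strong (windowed) sense, and its orbit closure is all of $X$; a standard argument then shows any $T$-invariant measure $\nu'$ must assign, for every $G \in C(X)$, the value $\int G \, d\nu' = L(G) = \int G \, d\nu$ — here one uses that $\int G\,d\nu'$ can be approximated by orbit averages of $G$ along a generic point of $\nu'$, which lies in $X = \overline{\{T^n x\}}$, and that windowed Cesàro averages of $G$ along the orbit of $x$ converge uniformly to $L(G)$ precisely because the two-sided limit in \eqref{eq:uniquely_ergodic} exists (this forces uniform convergence of the one-sided averages over shifted starting points in the orbit of $x$, hence over all of $X$ by density and equicontinuity-type estimates).

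The main obstacle is this last step: upgrading "the two-sided Cesàro limit along the single orbit of $x$ exists" to "the Birkhoff averages converge uniformly over all of $X$," which is what genuinely pins down the invariant measure. The cleanest route is to observe that for a fixed $G \in C(X)$, uniform approximation $G \approx \sum_j c_j T^{n_j} F$ (plus constants) reduces matters to $G = \Psi^{-1}$ of an element of $\mathcal{A}(\phi)$, and then the existence of the two-sided limit \eqref{eq:uniquely_ergodic} says exactly that $\frac{1}{N-M}\sum_{n=M}^{N-1} G(T^{n} x)$ is Cauchy uniformly in the window $[M,N)$; since every point of $X$ is a limit of points $T^{n} x$, a diagonal/continuity argument transfers this to uniform convergence of $\frac{1}{N}\sum_{n=0}^{N-1} G(T^n y)$ over $y \in X$, which is the Oxtoby characterization of unique ergodicity. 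I would present this reduction carefully, as it is the crux of the equivalence.
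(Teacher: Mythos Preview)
Your proof is correct. For the direction $(X,T)$ uniquely ergodic $\Rightarrow$ $\phi$ uniquely ergodic, your argument coincides with the paper's. For the converse, the paper takes a different route: rather than upgrading windowed Ces\`aro convergence along the orbit of $x$ to uniform Birkhoff convergence on all of $X$ and invoking Oxtoby's criterion, it cites \cite[Proposition~3.9]{Furstenberg81} to assert that the transitive point $x$ is quasi-generic for every ergodic invariant measure $\mu$ along some sequence of intervals $(I_N)$, and then observes that the existence of the uniform Ces\`aro limit forces $\int H\,d\mu=\lim_{N-M\to\infty}\frac1{N-M}\sum_{n=M}^N H(T^nx)$ to be independent of $\mu$ for $H$ in the dense subalgebra generated by $F$. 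Your approach is more self-contained; the paper's is shorter by outsourcing the key step to an external reference. Your ``diagonal/continuity argument'' is correct and needs no equicontinuity: for fixed $N$ larger than the threshold coming from uniform windowed convergence, and any $y=\lim_i T^{m_i}x\in X$, continuity of $G$ gives $\frac1N\sum_{n=0}^{N-1}G(T^ny)=\lim_i\frac1N\sum_{n=m_i}^{m_i+N-1}G(T^nx)$, and each term on the right is already within $\epsilon$ of $L(G)$.
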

\begin{proof}
    Let $(X,T,x)$ be the Furstenberg system associated to $\phi$ and let $F\in C(X)$ be such that $\phi(n)=F(T^nx)$ for every $n\in\N$.
First assume that $\phi$ is uniquely ergodic.
Let $\mu$ be an ergodic invariant measure on $(X,T)$.
Since $x$ has a dense orbit, \cite[Proposition 3.9]{Furstenberg81} implies that it is quasi-generic for $\mu$, in the sense that there exists a sequence $(I_N)_{N\in\N}$ of intervals in $\N$ whose lengths tend to infinity and such that
$$\int_X H\d\mu=\lim_{N\to\infty}\frac1{|I_N|}\sum_{n\in I_N}H(T^nx)\qquad\forall~H\in C(X).$$
For every function $H:X\to\C$ which belongs to the $T$-invariant and conjugation invariant algebra generated by $F$, the sequence $\psi(n):=H(T^nx)$ belongs to the algebra ${\mathcal A}(\phi)$
(which was defined at the beginning of \cref{sec:anti-uniform seminorms}).
Using the fact that $\phi$ is uniquely ergodic it follows that
$$\int_XH\d\mu=\lim_{N-M\to\infty}\frac1{N-M}\sum_{n=M}^N\psi(n),$$
and in particular this quantity depends only on $\phi$ and $H$, but not in the choice of $\mu$.
Invoking \cref{lem:apr-4-1}, this implies that the integral $\int_XH\d\mu$ does not depend on the choice of $\mu$, for every $H$ in a dense subset of $C(X)$.
Finally, in view of the Riesz representation theorem, we conclude that there is a unique invariant measure $\mu$ on $(X,T)$.

Conversely, if the Furstenberg system $(X,T,x)$ is uniquely ergodic, then for every $H\in C(X)$ the limit
$$\lim_{N-M\to\infty}\frac1{N-M}\sum_{n=M}^NH(T^nx).$$
exists.
Since for every $\psi\in{\mathcal A}(\phi)$ there exists $H\in C(X)$ such that $\psi(n)=H(T^nx)$, we conclude that $\phi$ is uniquely ergodic.
\end{proof}

We collect a few lemmas of dynamical nature which will be invoked in the proofs of Theorems \ref{thm:strengthening_Frantzikinakis_decomposition} and \ref{thm:Furstenberg_sytem_correlations}.

\begin{lemma}
\label{lem:two-norms-equal}
Let $\alpha$ be a uniquely ergodic sequence and let $(X,T,x)$ be its Furstenberg system, with unique invariant measure $\mu$.
Let $G\in C(X)$ and define $b(n)\coloneqq G(T^nx)$ for all $n \in \N$.
    Then for each $k \in \N$, we have $\hk{G}_{k} = \lVert b \rVert_{U^{k}(\N)}$.
\end{lemma}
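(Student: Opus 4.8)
The plan is to show that both seminorms $\hk{G}_k$ and $\lVert b\rVert_{U^k(\N)}$ can be computed by the same limiting expression, namely an average over $X$ of a product of translates of $G$, and that unique ergodicity lets us convert the ergodic-theoretic average into the combinatorial average over $\N$. First I would unfold the definition of $\lVert b\rVert_{U^k(\N)}$: since $b(n)=G(T^nx)$, the product $\prod_{\underline\eta\in\{0,1\}^k}\mathcal{C}^{|\underline\eta|}b(n+\underline\eta\cdot\underline h)$ equals $\prod_{\underline\eta}\mathcal{C}^{|\underline\eta|}G(T^{n+\underline\eta\cdot\underline h}x) = \big(\prod_{\underline\eta}\mathcal{C}^{|\underline\eta|}T^{\underline\eta\cdot\underline h}G\big)(T^n x)$, which is a continuous function on $X$ evaluated along the orbit of $x$. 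Fixing $\underline h$, the function $H_{\underline h}:=\prod_{\underline\eta}\mathcal{C}^{|\underline\eta|}T^{\underline\eta\cdot\underline h}G$ lies in the $T$-invariant conjugation-closed algebra generated by $G$, hence in $\mathcal{A}(b)$ after composing with the orbit map; by unique ergodicity of $(X,T)$ (equivalently of $b$, via \cref{prop:uniquely_ergodic}) the limit $\lim_{N\to\infty}\E_{n\in[N]}H_{\underline h}(T^nx)$ exists and equals $\int_X H_{\underline h}\d\mu = \int_X\prod_{\underline\eta}\mathcal{C}^{|\underline\eta|}T^{\underline\eta\cdot\underline h}G\d\mu$.

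Next I would feed this back into the definition of $\lVert b\rVert_{U^k(\N)}$, which then reads
\[
  \lVert b\rVert_{U^k(\N)}^{2^k} = \lim_{H\to\infty}\E_{\underline h\in[H]^k}\int_X\prod_{\underline\eta\in\{0,1\}^k}\mathcal{C}^{|\underline\eta|}T^{\underline\eta\cdot\underline h}G\d\mu.
\]
Comparing with the definition of the Host–Kra seminorm $\hk{G}_k$, the only discrepancies are (i) the order of the limit and the integral, and (ii) that $\hk{G}_k$ uses an iterated limit $\lim_{H_1\to\infty}\cdots\lim_{H_k\to\infty}\E_{\underline h\in[H_1]\times\cdots\times[H_k]}$ over a box while the uniformity seminorm uses the single limit $\lim_{H\to\infty}\E_{\underline h\in[H]^k}$ over a cube. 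For (i), the integrand is uniformly bounded (by $\|G\|_\infty^{2^k}$) so the limit in $\underline h$ can be moved inside the integral by dominated convergence once we know the pointwise (in fact uniform, from the orbit average computation) limit exists. For (ii), both the cube limit and the iterated box limit are known to exist (the box limit by \cite{Host_Kra05}; the cube limit follows from the same convergence results, or can be deduced from the existence of the box limits by a standard van der Corput / telescoping argument), and a general principle for multiparameter Cesàro averages of a convergent family shows they agree; alternatively one observes that $\hk{G}_k^{2^k} = \int_X \lim_{H\to\infty}\E_{\underline h\in[H]^k}\prod_{\underline\eta}\mathcal{C}^{|\underline\eta|}T^{\underline\eta\cdot\underline h}G\d\mu$ directly, since the inner average converges in $L^2(\mu)$.

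Putting these together gives $\lVert b\rVert_{U^k(\N)}^{2^k} = \hk{G}_k^{2^k}$, hence the claimed equality after taking $2^k$-th roots (both quantities being non-negative reals). I expect the main obstacle to be the careful justification of interchanging the limit over $\underline h$ with the integral over $X$ while simultaneously matching the cube-average in the definition of $\lVert\cdot\rVert_{U^k(\N)}$ with the iterated box-average in the definition of $\hk{\cdot}_k$; this is where one must be precise about which convergence results from \cite{Host_Kra05} (and \cite{Host_Kra09}, cf.\ the remark in \cref{sec:prelim_dual_nilsequences} that $\lVert\phi\rVert_{U^k(\N)}=\hk{F}_k$ for nilsequences) are being invoked, and whether uniform convergence of the orbit averages (which we do get, since $H_{\underline h}$ composed with the orbit map lies in $\mathcal{A}(b)$ and $b$ is uniquely ergodic) suffices to push everything through without an extra equidistribution input. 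Everything else is bookkeeping with bounded functions.
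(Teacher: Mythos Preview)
Your approach is essentially the same as the paper's: use unique ergodicity of $(X,T)$ to replace the integral $\int_X\prod_{\underline\eta}\mathcal{C}^{|\underline\eta|}T^{\underline\eta\cdot\underline h}G\d\mu$ by the orbital average $\lim_{N\to\infty}\E_{n\in[N]}\prod_{\underline\eta}\mathcal{C}^{|\underline\eta|}b(n+\underline\eta\cdot\underline h)$, and then match the remaining $\underline h$-average with the definition of $\lVert b\rVert_{U^k(\N)}$. The paper simply writes $\hk{G}_k^{2^k}=\lim_{H\to\infty}\E_{\underline h\in[H]^k}\int_X\prod_{\underline\eta}\mathcal{C}^{|\underline\eta|}T^{\underline\eta\cdot\underline h}G\d\mu$ without comment (implicitly using that the iterated and cube limits agree, a standard fact from \cite{Host_Kra05}), so your concerns (i) and (ii) are handled there in one stroke; your worry about interchanging $\lim_H$ with $\int_X$ is not actually needed once you grant that identity.
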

\begin{proof}

    By definition of the $\hk{\cdot}_{k}$-seminorm, we have
    \[
        \hk{G}_{k}^{2^{k}} = \lim_{H\to\infty}\E_{\underline{h}\in[H]^{k}} \int_X \prod_{\underline{\epsilon} \in \{0,1\}^{k}} C^{|\underline{\epsilon|}} T^{\underline{\epsilon} \cdot \underline{h}} G \, d \mu.
    \]
    Since $\mu$ is uniquely ergodic, $x\in X$ is a generic point, so we can write the right hand side of the previous equality as
    \[
        \lim_{H\to\infty}\lim_{N\to\infty}\E_{\underline{h}\in[H]^{k}}\E_{n\in[N]} \prod_{\underline{\epsilon} \in \{0,1\}^{k}} C^{|\underline{\epsilon}|} b(n + \underline{\epsilon} \cdot \underline{h}).
    \]
    This last expression equals $\lVert b \rVert_{U^{k}(\N)}^{2^{k}}$, proving the claim.
\end{proof}

\begin{proposition}[see {\cite[Proposition 6.1]{Host_Kra09}} or {\cite[page 387]{Host_Kra_18}}]
\label{prop:generic_for_joining}
    Suppose $(X, T)$ is a topological system, $x\in X$ is a transitive point (i.e.\, a point with a dense orbit), and $\mu$ is an invariant ergodic measure on $X$.
    Let $(Y,S)$ be a distal topological system, $\nu$ be an invariant measure on $Y$ and let $\pi: (X, \mu, T) \to (Y, \nu, S)$ be a measure theoretic factor map.
    Then there exists a point $y \in Y$ and a sequence of intervals $(I_N)_{N \in \N}$ such that
    \begin{equation}
        \lim_{N \to \infty} \E_{n \in I_N} f(T^n x) g(S^n y) = \int_X f \cdot g \circ \pi \, d\mu.
    \end{equation}
    for all $f \in C(X)$ and $g \in C(Y)$.
\end{proposition}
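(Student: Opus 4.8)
The plan is to recognize the displayed averages as those produced by a quasi-generic point for the \emph{graph joining} of $(X,\mu,T)$ and $(Y,\nu,S)$ determined by $\pi$. Since $\pi$ is a measure-theoretic factor map, it is represented by a $\mu$-a.e.\ defined measurable map $\pi\colon X\to Y$ with $\pi\circ T=S\circ\pi$ almost everywhere; set $\lambda=(\mathrm{id}_X,\pi)_*\mu$, a $T\times S$-invariant Borel probability measure on $X\times Y$ whose $X$-marginal is $\mu$. The map $z\mapsto(z,\pi(z))$ is a measure-theoretic isomorphism from $(X,\mu,T)$ onto $(X\times Y,\lambda,T\times S)$, so $\lambda$ is ergodic and $\int_X f\cdot(g\circ\pi)\d\mu=\int_{X\times Y} f(z)\,g(w)\d\lambda(z,w)$. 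Thus it suffices to find $y\in Y$ and intervals $(I_N)$ for which $(x,y)$ is quasi-generic for $\lambda$, i.e.\ $\E_{n\in I_N}\delta_{(T^nx,S^ny)}\to\lambda$ in the weak-$*$ topology. Observe that $\nu$ is automatically ergodic, being a factor of the ergodic $\mu$, and that we may assume $(Y,S)$ is minimal: in a distal system every orbit closure is minimal, so $\mathrm{supp}(\nu)$ is minimal, $\pi$ lands in it for $\mu$-a.e.\ point, and a subsystem of a distal system is again distal.

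The base case is when $Y$ is trivial: since $x$ has a dense orbit and $\mu$ is ergodic, $x$ is quasi-generic for $\mu$ by \cite[Proposition~3.9]{Furstenberg81}, which is exactly the assertion. For the general case I would invoke the Furstenberg structure theorem: a minimal distal system is an inverse limit of a tower
\[
Y_0\leftarrow Y_1\leftarrow\cdots\leftarrow Y_i\leftarrow\cdots,\qquad Y=\varprojlim_i Y_i,
\]
in which $Y_0$ is trivial and each $Y_{i+1}\to Y_i$ is an isometric extension, which after a further refinement of the tower we may take to be a compact group extension; the tower may be taken countable since $(X,\mu)$, hence its factor $(Y,\nu)$, is separable. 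Writing $\pi_i\colon X\to Y_i$ for the induced measure-theoretic factor maps and $\lambda_i=(\mathrm{id}_X,\pi_i)_*\mu$, I would prove by induction on $i$ that there exist a point $y_i\in Y_i$ projecting to $y_{i-1}$ and a sequence of intervals $(I^{(i)}_N)$, a subsequence of $(I^{(i-1)}_N)$, with $\E_{n\in I^{(i)}_N}\delta_{(T^nx,S^ny_i)}\to\lambda_i$. Granting this, a diagonal argument over $i$ produces a single interval sequence $(I_N)$ and the compatible point $y=(y_i)_i\in Y$ with $\E_{n\in I_N}\delta_{(T^nx,S^ny)}\to\lambda$, using that continuous functions on $X\times Y$ are uniformly approximable by functions pulled back from some $X\times Y_i$.

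The inductive step is the technical heart. Writing $Y_{i+1}=Y_i\times K$ with $S(w,k)=(Sw,\sigma(w)k)$ for a compact group $K$ and a continuous cocycle $\sigma\colon Y_i\to K$, one has $\pi_{i+1}=(\pi_i,\tau)$ for a measurable cocycle $\tau\colon X\to K$ over $\pi_i$, and $S^n(y_i,k_0)=(S^n y_i,\sigma_n(y_i)k_0)$ with $\sigma_n$ the $n$-th iterated cocycle. Expanding the $K$-coordinate by the Peter--Weyl theorem, the problem reduces to choosing, for each irreducible representation $\rho$ of $K$ and each $h\in C(Y_i)$, a fiber element $k_0$ and a further subsequence of $(I^{(i)}_N)$ along which $\E_{n}f(T^nx)\,h(S^ny_i)\,\rho(\sigma_n(y_i))\,\rho(k_0)$ converges to $\int_X f(z)\,h(\pi_i z)\,\rho(\tau(z))\d\mu(z)$. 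The key point is that, the extension being isometric, the $T\times S$-invariant measures on $X\times Y_{i+1}$ lying above $\lambda_i$ are rigidly parametrized by the fiber; compactness of $K$ together with an averaging argument and the ergodicity of $\lambda_{i+1}$ should then allow one to select $k_0$ and thin out the intervals so that the empirical measures converge to $\lambda_{i+1}$ itself rather than to some other lift.

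I expect this isometric-extension step to be the main obstacle --- both the equidistribution argument, which is precisely where distality of $(Y,S)$ is used and cannot be dispensed with, and the bookkeeping needed to thread a single point and a single sequence of intervals through the entire countable tower. The full details are carried out in \cite[Proposition~6.1]{Host_Kra09}; see also \cite[p.~387]{Host_Kra_18}.
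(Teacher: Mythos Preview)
The paper does not give its own proof of this proposition; it is stated with a citation to \cite[Proposition~6.1]{Host_Kra09} and \cite[p.~387]{Host_Kra_18} and used as a black box. Your sketch is a faithful outline of the Host--Kra argument referenced there (graph joining, Furstenberg structure theorem, inductive lifting through isometric extensions, diagonalization), so there is nothing to compare against within the paper itself.
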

\begin{corollary}
    \label{cor:generic_for_joining}
    Let the set-up be as in \cref{prop:generic_for_joining}. Then for any $P: \C^2\to\C$ continuous, we have
    \[
        \lim_{N \to \infty} \E_{n \in I_N} P(f(T^n x_0), g(S^n y_0)) = \int_X P(f, g \circ \pi) \, d\mu.
    \]
\end{corollary}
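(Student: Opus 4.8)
The plan is to derive \cref{cor:generic_for_joining} from \cref{prop:generic_for_joining} by combining the Stone--Weierstrass theorem with a routine approximation argument. It will be convenient to rephrase \cref{prop:generic_for_joining} in terms of the \emph{graph joining} of $\mu$ along $\pi$: let $\lambda$ be the Borel probability measure on $X \times Y$ determined by $\int_{X \times Y} H \d\lambda = \int_X H(z, \pi(z)) \d\mu(z)$ for all bounded measurable $H$ on $X \times Y$ (this is well defined even though $\pi$ is only specified $\mu$-almost everywhere, since all that enters is the $\mu$-equivalence class of $z \mapsto \pi(z)$). Writing $(f \otimes g)(z, w) := f(z) g(w)$, \cref{prop:generic_for_joining} says precisely that $\lim_{N \to \infty} \E_{n \in I_N} (f \otimes g)(T^n x_0, S^n y_0) = \int_{X \times Y} f \otimes g \d\lambda$ for all $f \in C(X)$ and $g \in C(Y)$.

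First I would observe that, by linearity in $H$ of both $H \mapsto \E_{n \in I_N} H(T^n x_0, S^n y_0)$ and $H \mapsto \int_{X \times Y} H \d\lambda$, this convergence extends from the generators $f \otimes g$ to every $H$ in their linear span $\mathcal{A} \subseteq C(X \times Y)$. Then I would invoke Stone--Weierstrass: $\mathcal{A}$ is a subalgebra of $C(X \times Y)$ that contains the constants, is closed under complex conjugation (because $\overline{f \otimes g} = \bar f \otimes \bar g$ and $C(X), C(Y)$ are conjugation-closed), and separates the points of the compact space $X \times Y$ (because $C(X)$ and $C(Y)$ separate points of $X$ and $Y$, respectively), hence $\mathcal{A}$ is uniformly dense in $C(X \times Y)$.

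Next, given a continuous $P : \C^2 \to \C$, the function $H_P(z, w) := P(f(z), g(w))$ is continuous on $X \times Y$, so for each $\epsilon > 0$ I can choose $H \in \mathcal{A}$ with $\lVert H_P - H \rVert_{C(X \times Y)} < \epsilon$. A standard $\epsilon/3$-type estimate --- bounding the difference between $\E_{n \in I_N} H_P(T^n x_0, S^n y_0)$ and $\E_{n \in I_N} H(T^n x_0, S^n y_0)$ by $\epsilon$, using $\E_{n \in I_N} H(T^n x_0, S^n y_0) \to \int_{X \times Y} H \d\lambda$ from the previous step, and bounding $|\int_{X \times Y} (H_P - H) \d\lambda|$ by $\epsilon$ --- then yields $\lim_{N \to \infty} \E_{n \in I_N} H_P(T^n x_0, S^n y_0) = \int_{X \times Y} H_P \d\lambda$. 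Finally, unwinding the definition of $\lambda$ gives $\int_{X \times Y} H_P \d\lambda = \int_X P(f(z), g(\pi(z))) \d\mu(z) = \int_X P(f, g \circ \pi) \d\mu$, which is exactly the asserted identity.

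I do not expect any genuine obstacle here; the argument is entirely soft. The only points needing a moment's attention are verifying the hypotheses of Stone--Weierstrass for $\mathcal{A}$ (in particular self-adjointness) and keeping straight that $\pi$ is a measure-theoretic --- not topological --- factor map, so that every statement involving $\pi$ is read modulo $\mu$-null sets; neither of these presents any real difficulty.
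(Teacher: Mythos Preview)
Your proof is correct and follows essentially the same approach as the paper: reduce to the product case handled by \cref{prop:generic_for_joining} and then invoke Stone--Weierstrass together with a standard approximation argument. The only cosmetic difference is that the paper applies Stone--Weierstrass in $\C^2$ (approximating $P$ by polynomials in $z_1,z_2,\overline{z_1},\overline{z_2}$) whereas you apply it on $X\times Y$ (approximating $H_P$ by finite sums of tensor products); these are two ways of packaging the same density-and-$\epsilon/3$ argument.
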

\begin{proof}
    It is easy to see the conclusion is true in the case $P(z_1,z_2)$ is polynomial on $z_1,z_2,\overline{z_1},\overline{z_2}$.
    Then by the Stone-Weierstrass theorem, the conclusion is true for an arbitrary continuous function $P$.
\end{proof}

We will also need the following proposition.
\begin{proposition}
    \label{prop:regular_joining}
    Let $\phi$ and $\psi$ be approximate nilsequences.
    Then for every continuous $P: \C^2 \to\C$, the following uniform Ces\`aro limit exists:
    \[
        \lim_{N - M \to \infty} \E_{n \in [M, N)}
P(\phi(n), \psi(n)).
    \]
\end{proposition}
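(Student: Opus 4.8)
The plan is to reduce, via the Stone--Weierstrass theorem, to the case where $P$ is a polynomial in the variables $z_1,z_2,\overline{z_1},\overline{z_2}$; to observe that for such $P$ the sequence $n\mapsto P(\phi(n),\psi(n))$ is itself an approximate nilsequence; and finally to check that every approximate nilsequence has a uniform Ces\`aro limit. For the reduction, fix a compact set $K\subset\C$ with $\phi(\N)\cup\psi(\N)\subseteq K$. The polynomials in $z_1,z_2,\overline{z_1},\overline{z_2}$ form a conjugation-closed subalgebra of $C(K\times K)$ that separates points and contains the constants, hence by Stone--Weierstrass it is uniformly dense in $C(K\times K)$. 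If $Q_j\to P$ uniformly on $K\times K$, then $\lvert\E_{n\in[M,N)}P(\phi(n),\psi(n))-\E_{n\in[M,N)}Q_j(\phi(n),\psi(n))\rvert\leq\sup_{K\times K}\lvert P-Q_j\rvert$ for all $M<N$, so once the claim is known for polynomials a routine $3\epsilon$-argument yields it for $P$.

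Next, for a polynomial $Q$, the sequence $n\mapsto Q(\phi(n),\psi(n))$ is built from $\phi,\overline{\phi},\psi,\overline{\psi}$ using only addition, scalar multiplication, and multiplication. Each of these four sequences is an approximate nilsequence: if $\widetilde\phi$ is a nilsequence with $\limsup_{N-M\to\infty}\E_{n\in[M,N)}\lvert\phi(n)-\widetilde\phi(n)\rvert<\epsilon$, then $\overline{\widetilde\phi}$ is again a nilsequence (the $k$-step nilsequences form a conjugation-closed algebra) approximating $\overline{\phi}$ equally well, and likewise for $\psi$. Since a $k$-step nilsequence is also a $k'$-step nilsequence for every $k'\geq k$, we may pick a single $k$ for which all of $\phi,\overline{\phi},\psi,\overline{\psi}$ are approximate $k$-step nilsequences. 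By \cref{lemma_approxNilAlgebra} the collection of approximate $k$-step nilsequences is an algebra, so $n\mapsto Q(\phi(n),\psi(n))$ is an approximate $k$-step nilsequence.

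It remains to show that an arbitrary approximate nilsequence $\beta$ has a uniform Ces\`aro limit. For each $j\in\N$ choose a nilsequence $\beta_j$ with $\limsup_{N-M\to\infty}\E_{n\in[M,N)}\lvert\beta(n)-\beta_j(n)\rvert<1/j$. Nilsequences are uniquely ergodic, so $L_j:=\lim_{N-M\to\infty}\E_{n\in[M,N)}\beta_j(n)$ exists, and consequently $\limsup_{N-M\to\infty}\lvert\E_{n\in[M,N)}\beta(n)-L_j\rvert\leq 1/j$. Applying this with two indices $j,j'$ gives $\lvert L_j-L_{j'}\rvert\leq 1/j+1/j'$, so $(L_j)_{j\in\N}$ is Cauchy; write $L$ for its limit. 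Then for every $j$ we have $\limsup_{N-M\to\infty}\lvert\E_{n\in[M,N)}\beta(n)-L\rvert\leq 1/j+\lvert L_j-L\rvert$, and letting $j\to\infty$ shows that $\lim_{N-M\to\infty}\E_{n\in[M,N)}\beta(n)=L$, as desired.

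I do not expect any real obstacle: the substance is already contained in \cref{lemma_approxNilAlgebra} and in the unique ergodicity of nilsequences. The only points that need attention are keeping every Ces\`aro average in the uniform ($N-M\to\infty$) sense throughout the argument, and including the conjugate variables $\overline{z_1},\overline{z_2}$ in the Stone--Weierstrass step so that the approximating algebra is closed under conjugation. As an alternative one could route the argument through the Furstenberg systems of $\phi$ and $\psi$ together with a joining of them in the spirit of \cref{prop:generic_for_joining}, but the algebraic argument above is shorter.
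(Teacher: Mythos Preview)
Your proof is correct and follows essentially the same route as the paper's: reduce via Stone--Weierstrass to polynomials in $z_1,z_2,\overline{z_1},\overline{z_2}$, invoke \cref{lemma_approxNilAlgebra} to see that the resulting sequence is an approximate nilsequence, and then use that (approximate) nilsequences have uniform Ces\`aro limits. The paper's proof is a two-line sketch of exactly this argument; your version simply fills in the details (the $3\epsilon$ argument, the common choice of $k$, and the Cauchy-sequence verification that approximate nilsequences inherit uniform Ces\`aro convergence from nilsequences).
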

\begin{proof}
  Using the Stone-Weierstrass theorem we may assume that $P(z_1,z_2)$ is a polynomial on $z_1,z_2,\overline{z_1},\overline{z_2}$.
  The result now follows from \cref{lemma_approxNilAlgebra} and the fact that the uniform Ces\`aro limit of a nilsequence exists.
\end{proof}

\subsection{The Furstenberg system of approximate nilsequences}
The following proposition reveals a new characterization of approximate nilsequences.
Even though this proposition is not needed in the sequel, it helps to put in perspective Theorems \ref{thm:Furstenberg_sytem_correlations} and \ref{thm:Furstenberg_system_correlations_general}.
\begin{proposition}
\label{prop:oct-3-1}
    A bounded sequence is a $k$-step nilsequence if and only if the associated topological Furstenberg system is isomorphic to a minimal $k$-step nilsystem.
\end{proposition}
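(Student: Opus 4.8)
\emph{Proof strategy.}
The plan is to prove the two implications separately. The reverse implication is immediate, so I would dispose of it first: suppose the topological Furstenberg system $(X,T,x)$ of a bounded sequence $\phi$ is (topologically) isomorphic to a minimal $k$-step nilsystem $(Y,S)$, say via a homeomorphism $\theta\colon X\to Y$ with $\theta\circ T=S\circ\theta$. By the very construction of the Furstenberg system, $\phi(n)=F(T^nx)$ where $F\in C(X)$ is the projection onto the $0$-th coordinate. Transporting through $\theta$ gives $\phi(n)=(F\circ\theta^{-1})\big(S^n\theta(x)\big)$ with $F\circ\theta^{-1}\in C(Y)$ and $\theta(x)\in Y$, which is precisely the form required by \cref{def_nilsequences}; hence $\phi$ is a $k$-step nilsequence.

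For the forward implication, the idea is to realise the Furstenberg system of a nilsequence as a topological factor of a minimal nilsystem. Concretely, I would start from a representation $\phi(n)=F(T_g^nx_0)$ for a $k$-step nilsystem $(Y=G/\Gamma,T_g)$, a point $x_0\in Y$ and $F\in C(Y)$, and first replace $Y$ by the forward orbit closure $Z:=\overline{\{T_g^nx_0:n\in\nz\}}$. Here I invoke the first of two classical structural facts about nilsystems (see \cite{Host_Kra_18} or \cite{Auslander_Green_Hahn63}): $Z$ is a sub-nilmanifold of $Y$, and $(Z,T_g|_Z)$ is a \emph{minimal} $k$-step nilsystem. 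Restricting $F$ to $Z$ (and keeping the name $F$), set $K:=\overline{F(Z)}\subset\C$, a compact set with $\phi(\N)\subset K$, and define $\Pi\colon Z\to K^{\nz}$ by $\Pi(z):=\big(F(T_g^nz)\big)_{n\in\nz}$. Then $\Pi$ is continuous, intertwines $T_g|_Z$ with the left shift, and sends $x_0$ to the point $x:=\big(\phi(n)\big)_{n\in\nz}$ defining the Furstenberg system. Since $Z$ is compact, $\Pi(Z)$ is closed, so it equals the closure of the orbit of $x$, i.e.\ $\Pi(Z)=X$. Thus $(X,T,x)$ is a topological factor of the minimal nilsystem $(Z,T_g|_Z)$, and I would finish by quoting the second structural fact — a topological factor of a minimal nilsystem is again a minimal nilsystem, of step no larger than the original — to conclude that $(X,T)$ is a minimal $k$-step nilsystem.

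The main obstacle, such as it is, is not a difficulty of argument but the need to correctly locate and invoke these two facts about nilsystems (the forward orbit closure of a point is a sub-nilmanifold on which the translation acts minimally, and topological factors of minimal nilsystems are minimal nilsystems of no greater step). Once they are granted, everything reduces to a routine diagram chase through the factor map $\Pi$; no quantitative input or appeal to the anti-uniformity/regularity machinery is needed, which is consistent with this proposition being included only for perspective.
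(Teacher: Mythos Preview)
Your proof is correct and follows essentially the same approach as the paper's: both handle the easy direction by transporting through the isomorphism, and for the forward direction both restrict to the orbit closure of the base point to obtain a minimal $k$-step nilsystem, construct the map $z\mapsto\big(F(T_g^nz)\big)_{n\in\nz}$ as a topological factor map onto the Furstenberg system, and then invoke the structural fact that a topological factor of a minimal $k$-step nilsystem is again a minimal $k$-step nilsystem. The only cosmetic difference is that you make the sub-nilmanifold step explicit, while the paper simply asserts that after restricting to the orbit closure the system is transitive (hence minimal); and you justify surjectivity of $\Pi$ via compactness of $\Pi(Z)$, whereas the paper argues pointwise using convergent subsequences---these are interchangeable.
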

\begin{proof}
    Let $\phi$ be a bounded sequence.
    First, assume its Furstenberg system $(X, T)$ is isomorphic to a minimal $k$-step nilsystem $(\tilde X,\tilde T)$ and let $\rho:\tilde X\to X$ be the isomorphism. By the definition of a Furstenberg system, there exists $F \in C(X)$ and $x \in X$ such that $\phi(n) = F(T^n x)$.
    Letting $\tilde x=\rho^{-1}x$ it follows that $\phi(n)=(F\circ\rho)(\tilde T^n\tilde x)$, and hence $\phi$ is a $k$-step nilsequence.

    Conversely, assume $\phi$ is a $k$-step nilsequence.
    Hence there exist a $k$-step nilsystem $(Y,S)$, a function $G \in C(Y)$ and $y_0 \in Y$ such that $\phi(n) = G(S^n y_0)$ for all $n \in \N_0$.
    By restricting to the orbit closure of $y_0$, we can assume that $(Y, S)$ is transitive (hence minimal and unique ergodic).
    Let $(X, T, x)$ be the pointed Furstenberg system of $\phi$.

    We claim that $(X,T)$ is a factor of $(Y,S)$.
    Since a factor of a minimal $k$-step nilsystem is again a minimal $k$-step nilsystem (see \cite{Parry_1973} or \cite[Chapter 13, Theorem 11]{Host_Kra_18}) this will conclude the proof.

    We prove the claim by explicitly constructing a factor map $\pi:Y\to X$.
    Given $y\in Y$ let $\pi(y)=\big(G(y),G(Sy),G(S^2y),\dots\big)$.
    Observe that $\pi(S^n y_0) = T^n x$ for all $n \in \N_0$.
    Since $(Y,S)$ is transitive, for every $y\in Y$ there is a sequence $(n_i)_{i \in \N}$ such that $S^{n_i} y_0 \to y$.
    Since $G$ is continuous, the sequence $T^{n_i} x=(G(S^{n_i}) y_0, G(S^{n_i + 1}) y_0, \ldots)$ converges to $\pi(y)$, showing that $\pi(y)\in X$.
    A similar argument shows that $\pi$ is continuous and surjective, and hence a factor map.
\end{proof}

Here is the main theorem of this section.
\begin{theorem}
\label{thm:Furstenberg_system_correlations_general}
    Let $\alpha: \N \to \C$ be an approximate $k$-step nilsequence and $(X, T)$ be the topological Furstenberg system associated to $\alpha$. Then
    \begin{enumerate}[label=(\roman{enumi}),ref=(\roman{enumi}),leftmargin=*]
        \item $(X, T)$ is uniquely ergodic, and
        \item If $\mu$ is the unique $T$-invariant measure on $X$, then the system $(X, \mu, T)$ is measure theoretically isomorphic to an inverse limit of $k$-step nilsystems.
    \end{enumerate}
\end{theorem}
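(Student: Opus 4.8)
For part (i) I would first note that the approximate $k$-step nilsequences form not merely a shift-invariant algebra (\cref{lemma_approxNilAlgebra}) but a conjugation-closed and $\ell^\infty$-closed one containing $\alpha$, so the algebra ${\mathcal A}(\alpha)$ is contained in it; and since every $k$-step nilsequence has a uniform Ces\`aro average, so does every sequence approximable in uniform density by such, hence so does every element of ${\mathcal A}(\alpha)$. Thus $\alpha$ is uniquely ergodic in the sense of \eqref{eq:uniquely_ergodic}, and \cref{prop:uniquely_ergodic} shows that its Furstenberg system $(X,T,x)$ is uniquely ergodic; as always the unique invariant measure $\mu$ is then ergodic, so the Host--Kra machinery applies to $(X,\mu,T)$.

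For part (ii), write $F\in C(X)$ for the projection onto the $0$-th coordinate, so that $\alpha(n)=F(T^nx)$, and let $\mathcal Z_k$ be the invariant $\sigma$-algebra defining the nilfactor $Z_k(X)$. It suffices to prove that $F$ is $\mathcal Z_k$-measurable (mod $\mu$): then every $F\circ T^n$ is $\mathcal Z_k$-measurable, and since $\{F\circ T^n:n\in\nz\}$ generates, via \cref{lem:apr-4-1}, an algebra that is dense in $C(X)$ and hence in $L^2(\mu)$ while sitting inside the closed subspace $L^2(\mathcal Z_k,\mu)$, we conclude $L^2(\mathcal Z_k,\mu)=L^2(\mu)$, i.e. $\mathcal Z_k=X$ mod $\mu$, which is exactly statement (ii). So fix $\epsilon>0$ and a $k$-step nilsequence $\psi$ with $\limsup_{N-M\to\infty}\E_{n\in[M,N)}|\alpha(n)-\psi(n)|<\epsilon$; the target is $\lVert F-\E(F\mid\mathcal Z_k)\rVert_{L^1(\mu)}\ll\epsilon$.

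The crux, and the step I expect to be the main obstacle, is that $\psi$ a priori lives on a different system than $X$, so one must bring $\alpha$ and $\psi$ onto a common system. I would pass to the joint Furstenberg system $\tilde W=\overline{\{R^n\tilde z:n\in\nz\}}$ of the $\C^2$-valued sequence $\tilde z=(\alpha(n),\psi(n))_{n\in\nz}$, with the two coordinate projections $p\colon\tilde W\to X$ and $q\colon\tilde W\to Y$, where $Y$ is the Furstenberg system of $\psi$ — a minimal $k$-step nilsystem by \cref{prop:oct-3-1}. Since the approximate $k$-step nilsequences form a conjugation-closed shift-invariant algebra of uniquely ergodic sequences containing both $\alpha$ and $\psi$, every polynomial in shifts of $\alpha,\psi$ and their conjugates is an approximate $k$-step nilsequence and hence uniquely ergodic, so (by the argument of \cref{prop:uniquely_ergodic}) $\tilde W$ is uniquely ergodic; let $\lambda$ be its invariant measure, with $p_*\lambda=\mu$ and $q_*\lambda=\mu_Y$. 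Genericity of $\tilde z$ turns the density bound into $\lVert F\circ p-H\circ q\rVert_{L^1(\lambda)}<\epsilon$, where $H\in C(Y)$ is the coordinate function of $Y$. Now $H\circ q$ is $q^{-1}\mathcal B_Y$-measurable and $q^{-1}\mathcal B_Y\subseteq Z_k(\tilde W)$ because the factor $q\colon\tilde W\to Y$ is a $k$-step nilsystem; so, using that conditional expectation is an $L^1$-contraction, $\lVert F\circ p-V\rVert_{L^1(\lambda)}\leq 2\epsilon$ for $V:=\E(F\circ p\mid q^{-1}\mathcal B_Y)$, and projecting once more, $\lVert F-v\rVert_{L^1(\mu)}\leq 2\epsilon$ where $v\circ p:=\E(V\mid p^{-1}\mathcal B_X)$ (so $v\in L^\infty(\mu)$). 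Finally I claim $v$ is $\mathcal Z_k$-measurable: for any $w\in L^2(\mu)$ with $\E(w\mid\mathcal Z_k)=0$ we have $\hk{w\circ p}_{k+1}=\hk{w}_{k+1}=0$, since Host--Kra seminorms are preserved under pullback along factor maps, whence $\E(w\circ p\mid Z_k(\tilde W))=0$ by \eqref{eq_nilfactor}, and therefore $\langle v,w\rangle_{L^2(\mu)}=\langle V,w\circ p\rangle_{L^2(\lambda)}=\langle V,\E(w\circ p\mid Z_k(\tilde W))\rangle=0$ as $V$ is $Z_k(\tilde W)$-measurable. Since $v$ is $\mathcal Z_k$-measurable and $\E(\cdot\mid\mathcal Z_k)$ contracts $L^1$ and fixes $v$, we get $\lVert F-\E(F\mid\mathcal Z_k)\rVert_{L^1(\mu)}\leq 2\lVert F-v\rVert_{L^1(\mu)}\leq 4\epsilon$; letting $\epsilon\to 0$ completes the proof.
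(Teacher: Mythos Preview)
Your proof is correct and part (i) matches the paper's, but your argument for part (ii) takes a genuinely different route. Both reduce (via \cref{lem:apr-4-1}) to showing that the coordinate function $F$ is $Z_k(X)$-measurable; the divergence is in how this is shown. The paper invokes the \emph{anti-uniformity} of smooth $k$-step nilsequences \cite[Corollary 2.15]{Host_Kra09}: approximating $\alpha$ by a smooth nilsequence $\psi_\epsilon$ with finite $\lVert\psi_\epsilon\rVert_{U^{k+1}(\N)}^*$, one bounds $\big|\int_X F\,G\d\mu\big|\leq \lVert\psi_\epsilon\rVert_{U^{k+1}(\N)}^*\,\hk{G}_{k+1}+\epsilon\lVert G\rVert_{L^2}$ directly via genericity and the definition of $\lVert\cdot\rVert^*$, from which $F\perp\{G:\hk{G}_{k+1}=0\}$ follows immediately. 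Your approach instead builds the \emph{joint} Furstenberg system $\tilde W$ of $(\alpha,\psi)$, observes it is uniquely ergodic (since products of shifts of $\alpha,\psi,\overline\alpha,\overline\psi$ are again approximate $k$-step nilsequences), realizes the nilsystem underlying $\psi$ as a factor $q\colon\tilde W\to Y$ sitting inside $Z_k(\tilde W)$, and then pushes measurability back down to $X$ using that Host--Kra seminorms are preserved under pullback along factor maps. Your argument is more self-contained in that it avoids the anti-uniformity black box from \cite{Host_Kra09}, at the cost of some bookkeeping with two layers of conditional expectation; the paper's argument is shorter and dovetails naturally with the anti-uniformity theme that recurs later (e.g.\ in \cref{thm:strengthening_Frantzikinakis_decomposition}).
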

\begin{proof}

By combining \cref{prop:uniquely_ergodic} with \cref{lemma_approxNilAlgebra} and the fact that the uniform Ces\`aro average of an approximate nilsequence exists it follows that the Furstenberg system $(X,T,x)$ of $\alpha$ is uniquely ergodic.
Let $F\in C(X)$ be the function which generates $\alpha$ in the sense that $\alpha(n)=F(T^nx)$ and let $\mu$ be the unique invariant measure on $X$.

Let $Z_k$ be the $k$-step nilfactor of $(X,\mu,T)$. Let $\pi:X\to Z_k$ be the factor map and by abuse of notation, identify $L^{\infty}(Z_k)$ with $\big\{f\circ\pi:f\in L^\infty(Z_k)\big\}\subset L^\infty(X)$.
We claim that $F\in L^{\infty}(Z_k)$.
Assuming the claim for now, since $L^{\infty}(Z_k)$ is a closed $T$-invariant and conjugation invariant algebra, we have from \cref{lem:apr-4-1} that $C(X)\subset L^{\infty}(Z_k)$.
This in turn implies that $L^{\infty}(Z_k) = L^\infty(X)$ and hence that $\pi$ is an isomorphism, finishing the proof.

We are left to prove the claim that $F \in L^{\infty}(Z_k)$.
Since $\alpha$ is an approximate $k$-step nilsequence, for every $\epsilon > 0$, there exists a smooth $k$-step nilsequence $\psi_\epsilon$ such that
\begin{equation}
\label{eq:oct-3-2}
    \lim_{N-M \to \infty} \E_{n \in [M, N)} |\alpha(n) - \psi_\epsilon(n)|^2 < \epsilon^2.
\end{equation}
By \cite[Corollary 2.15]{Host_Kra09}, smooth $k$-step nilsequences are $(k+1)$-anti-uniform.
In other words, $\lVert \psi_\epsilon \rVert_{U^{k+1}(\N)}^*<\infty$. 

\begin{lemma}
\label{lem:apr-4-2}
For every $G \in L^\infty(X)$ and $\epsilon>0$,
    \begin{equation}\label{eq_lemma_thm_nest}
    \left| \int_X F \cdot G \, d \mu \right| \leq \lVert \psi_\epsilon \rVert_{U^{k+1}(\N)}^* \cdot\hk{G}_{k+1} + \epsilon \|G\|_{L^2},
    \end{equation}
    where $\psi_\epsilon$ is a smooth nilsequence satisfying \eqref{eq:oct-3-2}. 
\end{lemma}
\begin{proof}
\renewcommand{\qedsymbol}{$\blacklozenge$}
If $p$ is large enough depending on $k$, then both sides of \eqref{eq_lemma_thm_nest} depend continuously on $G$ with respect to the $L^p$ norm.
Since every function in $L^\infty(X)$ can be approximated by continuous functions in the $L^p$ norm, it suffices to prove the statement in the special case when $G$ is continuous.

    Let $b(n)=G(T^nx)$ and note that since $(X,T)$ is uniquely ergodic, $b$ is also uniquely ergodic.
    Using the fact that $x$ is generic for $\mu$, we have
    $$\left| \int_X F \cdot G \, d \mu \right|
      =
        \lim_{N\to\infty}\left|\E_{n \in [N]} \alpha(n) b(n) \right|.$$
    Next, using the triangle inequality and the Cauchy-Schwarz inequality we have
     $$\left|\E_{n \in [N]} \alpha(n) b(n) \right|\leq
        \left|\E_{n \in [N]}\psi_\epsilon(n) b(n) \right|
        + \sqrt{\E_{n\in[N]}|\alpha(n)-\psi_\epsilon(n)|^2\cdot\E_{n\in[N]}|b(n)|^2}.$$
    Finally, combining the above with
    \eqref{eq:oct-3-2} and the definition of anti-uniformity seminorms we conclude that
    \begin{eqnarray*}
      \left| \int_X F \cdot G \, d \mu \right|
      &\leq&
        \lVert \psi_\epsilon \rVert_{U^{k+1}(\N)}^* \cdot \lVert b \rVert_{U^{k+1}(\N)} + \epsilon\sqrt{\lim_{N\to\infty}\E_{n\in[N]}|b(n)|^2}
        \\&=&
        \lVert \psi_\epsilon \rVert_{U^{k+1}(\N)}^* \cdot \lVert b \rVert_{U^{k+1}(\N)} + \epsilon \lVert G \rVert_{L^2(X)}.
    \end{eqnarray*}
    \cref{lem:two-norms-equal} implies that $\lVert b \rVert_{U^{k+1}(\N)} = \hk{G}_{k+1}$, which finishes the proof of the lemma. 
\end{proof}

We are now ready to prove the claim that $F \in L^{\infty}(Z_k)$.
This is equivalent to the statement that $F$ is orthogonal to any $G\in L^\infty(X)$ satisfying $\hk{G}_{k+1} = 0$.
    Given such $G$, \cref{lem:apr-4-2} implies that
    \[
         \left| \int_X F \cdot G \, d \mu \right|
         \leq
         \lVert \psi_\epsilon \rVert_{U^{k+1}(\N)}^* \cdot\hk{G}_{k+1} + \epsilon \|G\|_{L^2}
         =
         \epsilon \|G\|_{L^2}.
    \]
    Since $\epsilon$ is arbitrary, we have $\int F \cdot G = 0$, and hence $F$ is indeed orthogonal to any $G$ satisfying $\hk{G}_{k+1} = 0$.
    This proves the claim that $F \in L^{\infty}(Z_k)$ and concludes the proof of \cref{thm:Furstenberg_system_correlations_general}.
\end{proof}

\begin{proof}[Proof of \cref{thm:Furstenberg_sytem_correlations}]
    By \cite{Frantzikinakis15b}, $k$-multicorrelation sequences are approximate $k$-step nilsequences. Hence \cref{thm:Furstenberg_sytem_correlations} follows from \cref{thm:Furstenberg_system_correlations_general}.
\end{proof}

\section{An enhancement of Frantzikinakis' decomposition}

\label{sec:enhancement}

{In this section we prove \cref{thm:strengthening_Frantzikinakis_decomposition_1}.
We will in fact establish the following more general result.}
\begin{theorem}
\label{thm:strengthening_Frantzikinakis_decomposition}
    Let $\alpha$ be an approximate $k$-step nilsequence with $\lVert \alpha \rVert_{U^{k+1}(\N)}^* \leq1$.
    Then, for every $\epsilon > 0$, there exists a $k$-step nilsequence $\psi$ such that
    \begin{enumerate}[label=(\roman{enumi}),ref=(\roman{enumi}),leftmargin=*]
        \item $$\lim_{N-M \to \infty} \E_{n \in [M, N)} |\alpha(n) - \psi(n)| < \epsilon$$
        \item $\psi$ is a convex combination of dual nilsequences of the form $D_{k+1} \phi$ with $\phi$ being some $k$-step nilsequence and $\lVert \phi \rVert_{U^{k+1}(\N)} \leq1$.
    \end{enumerate}
\end{theorem}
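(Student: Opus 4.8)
The plan is to pass to the Furstenberg system of $\alpha$ and run a Hahn--Banach separation argument there. Let $(X,T,x)$ be the topological Furstenberg system of $\alpha$, so $\alpha(n)=F(T^nx)$ for some $F\in C(X)$. By \cref{thm:Furstenberg_system_correlations_general}, $(X,T)$ is uniquely ergodic with invariant measure $\mu$, and $(X,\mu,T)$ is isomorphic to an inverse limit of minimal uniquely ergodic $k$-step nilsystems $(X_i,S_i)$, with factor maps $p_i\colon X\to X_i$; identifying $C(X_i)$ with $\{f\circ p_i:f\in C(X_i)\}\subset C(X)$, the union $\bigcup_i C(X_i)$ separates points of $X$ and is therefore dense in $C(X)$ and in every $L^q(X,\mu)$. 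First I would record the way the hypothesis enters: for any $G\in C(X)$ the sequence $b(n)=G(T^nx)$ is uniquely ergodic, so $\E_{n\in[M,N)}\alpha(n)b(n)\to\int_X FG\,d\mu$ as $N-M\to\infty$, and hence $\lVert\alpha\rVert_{U^{k+1}(\N)}^*\le1$ together with \cref{lem:two-norms-equal} gives $|\int_X FG\,d\mu|\le\lVert b\rVert_{U^{k+1}(\N)}=\hk{G}_{k+1}$; by density and $L^q$-continuity of both sides (for $q$ large) this persists for every $G\in L^\infty(X)$.

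Next I would introduce $\mathcal D=\{D_{k+1}G:G\in C(X_i)\text{ for some }i,\ \hk{G}_{k+1}\le1\}\subset C(X)\subset L^1(X,\mu)$ and let $K$ be the $L^1$-closure of its convex hull. Note $0\in\mathcal D$ and $\mathcal D$ is invariant under multiplication by unit-modulus scalars, since $D_{k+1}(\lambda G)=\overline{\lambda}\,D_{k+1}G$ for $|\lambda|=1$ and $D_{k+1}(cG)=c^{2^{k+1}-1}D_{k+1}G$ for $c>0$ — both immediate from the definition, the product having $2^{k+1}-1$ factors of which $2^k$ are conjugated — so $K$ is a closed convex subset of $L^1$ with the same invariance. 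The key claim is that $F\in K$, and granting it the theorem follows readily: given $\epsilon>0$ choose a finite convex combination $\Psi=\sum_j\lambda_j D_{k+1}G_j$ from the convex hull of $\mathcal D$ with $\lVert F-\Psi\rVert_{L^1(X)}<\epsilon$, and pick an index $i$ above all those of the $G_j$, so that $\Psi$ factors through $X_i$; then $\psi(n):=\Psi(T^nx)$ is a genuine $k$-step nilsequence (it arises from $(X_i,S_i)$), it equals $\sum_j\lambda_j D_{k+1}\phi_j(n)$ with $\phi_j(n)=G_j(T^nx)$ a $k$-step nilsequence satisfying $\lVert\phi_j\rVert_{U^{k+1}(\N)}=\hk{G_j}_{k+1}\le1$ (by \cref{lem:two-norms-equal} and the identity $D_{k+1}\phi_j(n)=D_{k+1}G_j(T^nx)$ of \cref{sec:prelim_dual_nilsequences}), which is (ii), and unique ergodicity plus genericity of $x$ gives $\lim_{N-M\to\infty}\E_{n\in[M,N)}|\alpha(n)-\psi(n)|=\int_X|F-\Psi|\,d\mu<\epsilon$, which is (i).

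For the claim, suppose $F\notin K$. By Hahn--Banach separation in $L^1(X,\mu)$, with dual $L^\infty(X,\mu)$ under the bilinear pairing $\langle\psi,g\rangle=\int_X\psi g\,d\mu$, there are $\psi\in L^\infty(X,\mu)$ and $c\in\R$ with $\Re\int_X\psi g\,d\mu\le c<\Re\int_X\psi F\,d\mu$ for all $g\in K$; since $K$ is invariant under unit-modulus scalars this gives $\sup_{g\in K}|\int_X\psi g\,d\mu|\le c$, so $c\ge0$, $|\int_X\psi F\,d\mu|>c$, and $|\int_X\psi D_{k+1}G\,d\mu|\le c$ whenever $G\in C(X_i)$ with $\hk{G}_{k+1}\le1$. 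It remains to show $\hk{\psi}_{k+1}\le c$, since together with $|\int_X\psi F\,d\mu|\le\hk{\psi}_{k+1}$ from the first paragraph this contradicts $|\int_X\psi F\,d\mu|>c$. If $\hk{\psi}_{k+1}=0$ this is clear; otherwise set $\psi'=\psi/\hk{\psi}_{k+1}$, so $\hk{\psi'}_{k+1}=1$ and, by the scaling relation together with the identity $\int_X\psi\cdot D_{k+1}\psi\,d\mu=\hk{\psi}_{k+1}^{2^{k+1}}$ of \cref{sec:prelim_dual_nilsequences}, $\int_X\psi\cdot D_{k+1}\psi'\,d\mu=\hk{\psi}_{k+1}$. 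Approximating $\psi'$ in $L^1$ by functions $G_0\in C(X_i)$ with $\lVert G_0\rVert_{L^\infty}\le\lVert\psi'\rVert_{L^\infty}$ (density of $\bigcup_i C(X_i)$ plus truncation), \cref{lemma_dualLp} applied to suitably rescaled functions gives $\lVert D_{k+1}G_0-D_{k+1}\psi'\rVert_{L^1}\to0$ and, by the same estimate, $\hk{G_0}_{k+1}\to\hk{\psi'}_{k+1}=1$; after dividing $G_0$ by $\max(1,\hk{G_0}_{k+1})$ we may assume $\hk{G_0}_{k+1}\le1$ while still $\int_X\psi\cdot D_{k+1}G_0\,d\mu\to\hk{\psi}_{k+1}$, and since each such $G_0$ obeys $|\int_X\psi D_{k+1}G_0\,d\mu|\le c$ we obtain $\hk{\psi}_{k+1}\le c$.

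The main obstacle is precisely this last step: certifying the Host--Kra seminorm $\hk{\psi}_{k+1}$ of the separating functional using only \emph{continuous} dual functions $D_{k+1}G$ drawn from a single finite stage $X_i$ of the inverse limit. The exact identity $\int_X\psi\cdot D_{k+1}\psi\,d\mu=\hk{\psi}_{k+1}^{2^{k+1}}$ makes the unnormalized $L^\infty$ computation clean, and the inverse-limit description of the Furstenberg system from \cref{thm:Furstenberg_system_correlations_general} combined with the $L^1$-stability of dual functions (\cref{lemma_dualLp}) is what makes the descent to continuous functions on one $X_i$ possible; the latter is also what guarantees that the $\psi$ produced at the end is an honest $k$-step nilsequence rather than merely an approximate one.
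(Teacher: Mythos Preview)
Your Hahn--Banach/convex-hull argument is essentially the paper's \cref{lem:antiuniform_in_convex_hull}, organized a little differently but correct. The gap is in the passage from the $L^1$ approximation $\lVert F-\Psi\rVert_{L^1(\mu)}<\epsilon$ to the sequence estimate in (i) and to $\psi$ being an honest nilsequence. You write that $\{f\circ p_i:f\in C(X_i)\}\subset C(X)$, but the factor maps $p_i\colon X\to X_i$ supplied by \cref{thm:Furstenberg_system_correlations_general} are only \emph{measure-theoretic}: $(X,\mu,T)$ is measure-theoretically isomorphic to an inverse limit of $k$-step nilsystems, not topologically. Hence $f\circ p_i$ lies in $L^\infty(X,\mu)$, not in $C(X)$, and $p_i(x)$ is not canonically defined at the specific transitive point $x$. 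Consequently $\psi(n)=\Psi(T^nx)$ is not well defined, and even if you pick a measurable representative, ``unique ergodicity plus genericity of $x$'' only yields $\E_{n\in[M,N)}H(T^nx)\to\int_X H\,d\mu$ for \emph{continuous} $H$; you cannot apply it to $|F-\Psi|$, which is merely bounded measurable. The same issue blocks the claimed identity $D_{k+1}\phi_j(n)=D_{k+1}G_j(T^nx)$.

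The paper repairs exactly this point. After obtaining $\tilde G=G'\circ\pi$ with $G'\in C(Y)$ on a single nilsystem factor $(Y,S)$ (this much you also arrange), it invokes \cref{prop:generic_for_joining}/\cref{cor:generic_for_joining}: since $(Y,S)$ is distal, there exist a point $y_0\in Y$ and intervals $I_N$ with $\E_{n\in I_N}|F(T^nx)-G'(S^ny_0)|\to\int_X|F-\tilde G|\,d\mu<\epsilon$. The nilsequence is then $\psi(n)=G'(S^ny_0)$, which is genuinely a $k$-step nilsequence because it is built from a continuous function on the nilsystem $Y$ and an actual point $y_0\in Y$, not from a measurable pullback to $X$. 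Finally \cref{prop:regular_joining} upgrades the average along $I_N$ to a uniform Ces\`aro limit. If you insert this ``generic for the graph joining'' step in place of your direct evaluation $\Psi(T^nx)$, the rest of your argument goes through.
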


\begin{proof}[Proof of \cref{thm:strengthening_Frantzikinakis_decomposition_1} using \cref{thm:strengthening_Frantzikinakis_decomposition}]
    Let $\alpha$ be defined as in \cref{thm:strengthening_Frantzikinakis_decomposition_1}.
    By \cite{Frantzikinakis15b}, $\alpha$ is an approximate $k$-step nilsequence and by \cite[Section 23.3.2]{Host_Kra_18}, $\lVert \alpha \rVert_{U^{k+1}(\N)}^* \leq 1$ (see \cref{sec:anti-uniform seminorms}).
    Therefore, \cref{thm:strengthening_Frantzikinakis_decomposition_1} follows from \cref{thm:strengthening_Frantzikinakis_decomposition}.
\end{proof}

Before going into the proof of \cref{thm:strengthening_Frantzikinakis_decomposition}, we need a definition.
\begin{definition}
  Let $(X,\mu,T)$ be a measure preserving system. We denote by $\nil_k(X)$ the space of all functions on $X$ of the form $G\circ\pi$ where $\pi:(X,\mu,T)\to(Y,\nu,S)$ is a factor map of measure preserving systems, $(Y,\nu,S)$ is a $k$-step nilsystem and $G\in C(Y)$.
\end{definition}
The following lemma is inspired by \cite[Proposition 5.7]{Host_Kra09}.
\begin{lemma}
    \label{lem:antiuniform_in_convex_hull}
    Let $\alpha$ be an approximate $k$-step nilsequence with $\|\alpha\|_{U^{k+1}(\N)}^*\leq 1$.
    Let $(X, \mu, T)$ be its Furstenberg system and let $F$ be the continuous function associated to $\alpha$ as constructed in \cref{sec:prelim_Furstenberg_system}.
    Let
    $${\mathcal A}:=\Big\{D_{k+1} G: G\in \nil_k(X) \mbox{ and }  \hk{G}_{k+1} \leq 1\Big\}.$$
   Then $F$ is in the closed (with respect to the $L^1(\mu)$ topology) convex hull of ${\mathcal A}$.
\end{lemma}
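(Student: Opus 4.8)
The plan is to use a Hahn–Banach separation argument. Suppose, for contradiction, that $F$ is not in the closed convex hull $\mathcal{K}$ of $\mathcal{A}$ in $L^1(\mu)$. Since $\mathcal{K}$ is a closed convex subset of $L^1(\mu)$ and $L^\infty(\mu) = L^1(\mu)^*$, there exist $G \in L^\infty(\mu)$ and a real number $c$ such that
\[
    \Re \int_X F \cdot \overline{G} \d\mu > c \geq \Re \int_X H \cdot \overline{G} \d\mu \qquad \text{for all } H \in \mathcal{K}.
\]
First I would normalize: replacing $G$ by a scalar multiple, we may arrange things so that the relevant estimate becomes a clean inequality. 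The key point is that $0 \in \mathcal{A}$ (take $G$ identically $0$, or note $D_{k+1}$ of a constant with small seminorm), so $c \geq 0$, and hence $\Re\int_X F\overline{G}\d\mu > 0$; moreover taking suprema over $\mathcal{A}$ and using homogeneity we get $\left|\int_X F\overline{G}\d\mu\right| > \sup_{H\in\mathcal{A}}\left|\int_X H\overline{G}\d\mu\right|$.

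Next I would compute the right-hand side. For $G' \in \nil_k(X)$ with $\hk{G'}_{k+1}\leq 1$, we have $D_{k+1}G' \in \mathcal{A}$, and since $\int_X F \cdot D_{k+1}G'\d\mu$ relates to pairing $F$ against a dual function, I would use the identity $\int_X G' \cdot D_{k+1} G'\d\mu = \hk{G'}_{k+1}^{2^{k+1}}$ together with the self-adjointness properties of the dual-function construction to rewrite $\int_X F \cdot \overline{D_{k+1}G'}\d\mu$. More usefully: I would replace $G$ in the separation by its conditional expectation onto a suitable nilfactor (this does not decrease the relevant pairing by Theorem \ref{thm:Furstenberg_system_correlations_general}, since $F \in L^\infty(Z_k)$), so we may assume $G \in \nil_k(X)$ — here I would want to approximate $\E(G|Z_k)$ in $L^2$ by functions of the form $G' \circ \pi$ with $G'$ continuous on a nilsystem factor. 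Then, rescaling $G$ so that $\hk{G}_{k+1} = 1$, the element $D_{k+1}G$ lies in $\mathcal{A}$, and $\left|\int_X F \cdot \overline{D_{k+1}G}\d\mu\right| \leq \sup_{H\in\mathcal{A}}|\int_X F \overline H \d\mu|$, which I will contrast with the separation inequality applied to this same $H = D_{k+1}G$.

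The decisive estimate is the anti-uniformity hypothesis combined with Lemma \ref{lem:two-norms-equal}: writing $b(n) = G(T^n x)$, since $(X,T)$ is uniquely ergodic (Theorem \ref{thm:Furstenberg_system_correlations_general}), $b$ is uniquely ergodic, $x$ is generic for $\mu$, and
\[
    \left|\int_X F\cdot \overline{G}\d\mu\right| = \lim_{N\to\infty}\left|\E_{n\in[N]}\alpha(n)\overline{b(n)}\right| \leq \|\alpha\|_{U^{k+1}(\N)}^* \cdot \|\overline b\|_{U^{k+1}(\N)} = \hk{\overline G}_{k+1} = \hk{G}_{k+1}.
\]
Comparing $\hk{G}_{k+1}$ with the separating inequality — after suitably rescaling $G$ — contradicts the strict separation, provided I can show that the supremum of $|\int_X H\overline{G}\d\mu|$ over $H\in\mathcal{A}$ is at least $\hk{G}_{k+1}$ when $G\in\nil_k(X)$; this comes from choosing $H = D_{k+1}(G/\hk{G}_{k+1})$ and the identity $\int_X G\cdot D_{k+1}G\d\mu = \hk{G}_{k+1}^{2^{k+1}}$, which forces $\int_X H\cdot \overline G\d\mu$ to have modulus comparable to $\hk{G}_{k+1}$ (here I must be careful about conjugates: I would arrange $H$ to pair with $G$ rather than $\overline G$ by conjugating $G$ from the start, which is harmless).

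The main obstacle I anticipate is the reduction from a general separating functional $G \in L^\infty(\mu)$ to one living in $\nil_k(X)$ with the dual-function pairing behaving correctly — i.e., justifying that conditioning onto $Z_k$ and then approximating by continuous functions pulled back from genuine nilsystem factors preserves both the $\hk{\cdot}_{k+1}$ seminorm estimate and the strict separation. Because $Z_k$ is an inverse limit of nilsystems (Theorem \ref{thm:Furstenberg_system_correlations_general}), any $L^2$ function on $Z_k$ is an $L^2$-limit of functions $G'\circ\pi_j$ with $\pi_j$ a factor map onto a nilsystem and $G'$ continuous, and $\hk{\cdot}_{k+1}$ is $L^p$-continuous for large $p$, so this approximation should go through; making the limiting argument interact cleanly with the open half-space from Hahn–Banach, and with the convex-hull closure, is the delicate bookkeeping I would need to get right.
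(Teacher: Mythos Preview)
Your proposal is correct and follows essentially the same Hahn--Banach separation strategy as the paper: bound $|\langle F,G\rangle|$ by $\hk{G}_{k+1}$ via the anti-uniformity hypothesis and \cref{lem:two-norms-equal}, approximate the separating functional by an element of $\nil_k(X)$ using \cref{thm:Furstenberg_system_correlations_general}, and reach a contradiction by testing against the dual function of that approximation via the identity $\int G\cdot D_{k+1}G=\hk{G}_{k+1}^{2^{k+1}}$. The paper resolves your anticipated ``main obstacle'' cleanly with \cref{lemma_dualLp} (the $L^1$-continuity of $D_{k+1}$), and sidesteps your conjugate bookkeeping by working with the bilinear pairing $\langle H,f\rangle=\int Hf$ rather than the sesquilinear one.
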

\begin{proof}

    Let $\mathcal{K}$ denote the closed convex hull of ${\mathcal A}$.
    We assume, for the sake of a contradiction, that the conclusion of the lemma is false.
    Then by the Hahn-Banach Separation Theorem, there exist a real number $c$ and a function $H\in L^\infty(X)$ such that $\Re\langle H, F \rangle > c$ and $\Re\langle H, f \rangle \leq c$ for all $f \in \mathcal{K}$.
    After multiplying $H$ by an appropriate complex scalar if necessary (and changing $c$ accordingly) we can assume that $\hk{H}_{k+1}=1$ and $\langle H, F \rangle\in\R^{\geq0}$.

    Let $\epsilon>0$ be such that $\langle H, F \rangle > c+\epsilon$.
    We claim that
    \begin{equation}\label{eq_idk}
        \langle H, F \rangle \leq \hk{H}_{k+1}=1,
    \end{equation}
    whence $c<1-\epsilon$. 
    To justify \eqref{eq_idk}, first approximate $H$ by a continuous function $\tilde H$ in $L^{2^{k+1}}$, say $\lVert H - \tilde H \rVert_{L^{2^{k+1}}} \ll_F \delta$. 
    Let $b(n) = \tilde H(T^n x_0)$ for $n \in \N$. Then $(b(n))_{n \in \N}$ is a uniquely ergodic sequence and hence, using \cref{lem:two-norms-equal},
    \begin{multline*}
        |\langle H , F \rangle| \leq |\langle \tilde H, F \rangle| + \delta = |\E_n b(n) \alpha(n)| + \delta \leq \lVert b \rVert_{U^{k+1}(\N)} \lVert \alpha \rVert_{U^{k+1}(\N)}^* + \delta \\
        \leq \lVert b \rVert_{U^{k+1}} + \delta= \hk{\tilde H}_{k+1} + \delta \leq  \hk{H}_{k+1} + 2 \delta.
    \end{multline*}
    Letting $\delta\to0$ we obtain \eqref{eq_idk} as claimed.

    By \cref{thm:Furstenberg_system_correlations_general}, $(X, \mu, T)$ is an inverse limit of $k$-step nilsystems, which means that $\nil_{k}(X)$ is dense in $L^p(X)$ for every $p<\infty$.
    In particular, in view of \cref{lemma_dualLp}, we can find $H'\in\nil_{k}(X)$ such that $\hk{H'}_{k+1}=1$ and $\|D_{k+1}H-D_{k+1}H'\|_{L^1(X)}<\epsilon/\|H\|_{L^\infty(X)}$.
    In particular, $D_{k+1}H'$ is in $\mathcal{K}$ and so $\Re \langle H, D_{k+1}H'\rangle \leq c<1-\epsilon$. On the other hand,
    \[
        \Re\langle H, D_{k+1}H' \rangle
        \geq
        \Re \langle H, D_{k+1} H \rangle-\epsilon
        =
        \hk{H}_{k+1}-\epsilon
        =
        1-\epsilon
    \]
    providing the desired contradiction.
\end{proof}

\begin{proof}[Proof of \cref{thm:strengthening_Frantzikinakis_decomposition}]
Let $(X,T)$ be the Furstenberg system associated with $\alpha$.
In view of \cref{thm:Furstenberg_system_correlations_general}, this system is uniquely ergodic. Let $\mu$ be the unique invariant measure.

By \cref{lem:antiuniform_in_convex_hull}, there exists $t\in\N$ and, for each $i=1,\dots,t$, a $k$-step nilsystem $(Y_i,S_i)$ which is a factor of $(X,T)$ via a factor map $\pi_i$, and a function $G_i\in C(Y_i)$ such that $\lVert F - \tilde{G} \rVert_{L^1(\mu)} < \epsilon$ for some convex combination $\tilde G$ of the functions $D_{k+1} G_i\circ\pi_i$, $i=1,\dots,t$.
Since all the $(Y_i,S_i)$ are factors of $(X,T)$ and are $k$-step nilsystems, they are all factors of the maximal $k$-step nilfactor $Z_k(X)$. 
On the other hand, $Z_k(X)$ is an inverse limit of $k$-step nilsystems, so it follows that there exists a factor $(Y,S)$ of $(X,T)$ which contains each of $(Y_i,S_i)$ as a further factor.
From \cite[Corollary 5.3]{Host_Kra09} we deduce that $D_{k+1} G_i \in C(Y)$ for all $i$, and hence $\tilde G=G'\circ\pi$ where $\pi:X\to Y$ is the factor map and $G'\in C(Y)$.

Applying \cref{cor:generic_for_joining} to the function $P(z_1,z_2) = |z_1-z_2|$, there exists a sequence of intervals $(I_N)_{N \in \N}$ and  a point $y_0 \in Y$ such that
    \[
        \lim_{N \to \infty} \E_{n \in I_N} |F(T^n x_0) - G'(S^n y_0)| = \int_X |F - \tilde{G}| \, d\mu < \epsilon.
    \]
    Because $(F(T^n x_0))_{n \in \N}$ is an approximate nilsequence and $(G'(S^n y_0))_{n \in \N}$ is a $k$-step nilsequence, by \cref{prop:regular_joining}, the above average can be replaced by the uniform Ces\`aro average. Therefore,
    \[
        \lim_{N - M \to \infty} \E_{n \in [M, N)} |F(T^n x_0) -G'(S^n y_0)| < \epsilon.
    \]
    Let $\psi(n) = G'(S_i^n y_0)$.
    Then by \cref{sec:prelim_dual_nilsequences}, $\psi$ indeed is a convex combination of dual nilsequences of the form $D_{k+1} \phi$ with $\phi$ being a $k$-step nilsequence and $\lVert \phi \rVert_{U^{k+1}(\N)}\leq 1$.
\end{proof}

\section{Decomposition along the primes}
\label{sec:decomposition_along_primes}

The main of goal of this section is to prove \cref{thm:decomposition_along_primes}.
Before presenting its proof, we need some technical lemmas.

\begin{lemma}[{\cite[Chapter 22, Lemma 10]{Host_Kra_18}}]
\label{lem:sep-30-4}
    Let $k \in \N$ and $\phi$ be a uniquely ergodic sequence. Then
    \[
        \limsup_{N \to \infty} \lVert \phi 1_{[N]} \rVert_{U^k(\Z_N)} \ll_k \lVert \phi \rVert_{U^k(\N)}.
    \]
\end{lemma}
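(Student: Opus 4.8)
First I would reduce, by homogeneity, to the case $\lVert\phi\rVert_\infty\le1$. Let $(X,T,x)$ be the Furstenberg system of $\phi$; it is uniquely ergodic by \cref{prop:uniquely_ergodic}, so its unique invariant measure $\mu$ is ergodic, and we may write $\phi(n)=F(T^nx)$ with $F\in C(X)$. By \cref{lem:two-norms-equal} (applied with $G=F$) we have $\lVert\phi\rVert_{U^k(\N)}=\hk{F}_k$, so it is enough to prove $\limsup_{N\to\infty}\lVert\phi 1_{[N]}\rVert_{U^k(\Z_N)}\ll_k\hk{F}_k$; for $k=1$ this is immediate, since $\lVert\phi 1_{[N]}\rVert_{U^1(\Z_N)}=|\E_{n\in[N]}\phi(n)|\to|\int_XF\,d\mu|=\hk{F}_1$ by unique ergodicity, so I assume $k\ge2$.

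Next I would peel off one difference parameter. Writing $g_N:=\phi 1_{[N]}$, regarded as a function on $\Z_N$, separating the last of the $k$ difference parameters in the definition of the $U^k(\Z_N)$-norm and using translation invariance of the average over $\Z_N$ gives
\[ \lVert g_N\rVert_{U^k(\Z_N)}^{2^k}=\E_{\underline h\in\Z_N^{k-1}}\bigl|\gamma_N(\underline h)\bigr|^2,\qquad \gamma_N(\underline h):=\E_{n\in\Z_N}\prod_{\underline\eta\in\{0,1\}^{k-1}}\mathcal C^{|\underline\eta|}g_N(n+\underline\eta\cdot\underline h). \]
The same manipulation applied to the Host--Kra seminorm, together with ergodicity of $\mu$ (so that $\hk{G}_1=|\int_XG\,d\mu|$ by the von Neumann mean ergodic theorem), gives
\[ \hk{F}_k^{2^k}=\lim_{H\to\infty}\E_{\underline h\in[H]^{k-1}}\bigl|\gamma_\infty(\underline h)\bigr|^2,\qquad \gamma_\infty(\underline h):=\int_X\prod_{\underline\eta\in\{0,1\}^{k-1}}\mathcal C^{|\underline\eta|}\,T^{\underline\eta\cdot\underline h}F\,d\mu. \]
For each fixed $\underline h$, unique ergodicity of $\phi$ gives $\gamma_N(\underline h)\to\gamma_\infty(\underline h)$ as $N\to\infty$, since the factor $1_{[N]}$ and the reduction modulo $N$ affect only $O_{\underline h}(1)$ of the $N$ summands. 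On the other hand, the shift vectors $\underline h$ whose coordinates are bounded by any fixed $H$ form a proportion $O_H(N^{-(k-1)})$ of $\Z_N^{k-1}$, so they contribute nothing to $\limsup_N\lVert g_N\rVert_{U^k(\Z_N)}^{2^k}$; thus the whole problem concerns the shift vectors $\underline h$ of size comparable to $N$.

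This last point is the main obstacle I expect to face: for $\underline h$ with coordinates of order $N$ the correlation $\gamma_N(\underline h)$ genuinely wraps around modulo $N$, and such $\underline h$ are invisible to the bounded-scale seminorms $\hk{\cdot}_k$. To handle them the plan is a van der Corput / Cauchy--Schwarz scale-reduction based on \cref{lem:Cauchy_Schwarz_Gowers_inequality}: one writes $|\gamma_N(\underline h)|^2=\E_{s\in\Z_N}\E_{n\in\Z_N}\prod_{\underline\epsilon\in\{0,1\}^k}\mathcal C^{|\underline\epsilon|}g_N\bigl(n+\underline\epsilon\cdot(\underline h,s)\bigr)$, recognising a full $k$-dimensional cube correlation of $g_N$ in the enlarged shift vector $(\underline h,s)$, inserts an auxiliary window $[L]$ with $L=L(N)\to\infty$ and $L(N)=o(N)$, and uses the \emph{uniform} version of unique ergodicity (uniform convergence of Birkhoff averages) to replace window averages of these cube products by their integrals against $\mu$; iterating over all $k$ directions confines, up to a vanishing error, every difference parameter to the window $[L]$. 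In that range there is no wraparound, unique ergodicity turns each inner $n$-average into the value $\gamma_\infty$ of the corresponding shift, and the Cauchy--Schwarz--Gowers inequality for the Host--Kra seminorms bounds the outcome by $C_k\hk{F}_k^{2^k}$. It is precisely here that unique ergodicity (rather than mere convergence of Ces\`aro averages of $\phi$) is essential, and the order in which $N$, $L$ and $H$ tend to their limits must be managed with care. Combining this estimate with the negligible bounded-shift terms and extracting $2^k$-th roots gives $\limsup_{N\to\infty}\lVert\phi 1_{[N]}\rVert_{U^k(\Z_N)}\ll_k\hk{F}_k=\lVert\phi\rVert_{U^k(\N)}$.
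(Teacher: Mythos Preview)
The paper does not actually prove this lemma: it is stated with a citation to \cite[Chapter 22, Lemma 10]{Host_Kra_18} and used as a black box. So there is no ``paper's own proof'' to compare against, and your task was really to reconstruct an argument from the literature.

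Your write-up correctly isolates the main difficulty (the wraparound in $\Z_N$ for shift vectors $\underline h$ with coordinates of order $N$) and correctly notes that the small-$\underline h$ contribution is negligible in measure. However, the crucial paragraph---the ``van der Corput / Cauchy--Schwarz scale-reduction''---is a plan rather than a proof. As written, the identity you display for $|\gamma_N(\underline h)|^2$ simply rebuilds $\lVert g_N\rVert_{U^k(\Z_N)}^{2^k}$ after averaging over $\underline h$, so nothing has yet been gained; the claimed insertion of an auxiliary window $[L]$ that ``confines every difference parameter to $[L]$'' is not substantiated, and it is not clear what inequality you are applying at each iteration or why the errors are controlled uniformly in the large shift $\underline h$. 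The phrase ``uses the uniform version of unique ergodicity to replace window averages of these cube products by their integrals'' hides exactly the point that needs work: the cube product at a wrapped-around configuration is not a short Birkhoff average of a single continuous function on $X$, because the factors sit at positions $n+\underline\eta\cdot\underline h\pmod N$ that can land in different ``copies'' of $[N]$.

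If you want to carry this through, one clean route (and essentially what is done in \cite{Host_Kra_18}) is to avoid the $\Z_N$ wraparound from the outset: pass to the unnormalised Gowers $U^k[N]$-norm on the interval, which is comparable (with constants depending only on $k$) to $\lVert\phi 1_{[N]}\rVert_{U^k(\Z_N)}$, and then exploit the inductive formula $\lVert\cdot\rVert_{U^k}^{2^k}=\E_h\lVert\Delta_h\cdot\rVert_{U^{k-1}}^{2^{k-1}}$ together with unique ergodicity to push the $h$-averages to a bounded window one coordinate at a time. That makes precise the ``confining the difference parameters'' step you allude to, and the uniform convergence of Birkhoff averages then does the rest.
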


The following lemma is reminiscient of Cauchy-Schwarz-Gowers inequality. However, due to the involvement of both Gowers norm on cyclic groups and uniformity seminorm on $\N$, we need some technical modifications.

\begin{lemma}
\label{lem:g_2_and_dual_function}
    Let $k \in \N$ and $\phi: \N \to \C$ be a nilsequence.
    Let $g : \N \to \C$ and let $(N_l)_{l \in \N}$ be an increasing sequence of positive integers for which
    \[
        \limsup_{l \to \infty} \E_{n \in [N_l]} |g(n)| \leq 1.
    \]
    Then
    \[
        \limsup_{l \to \infty} \left| \E_{n \in [N_l]} g(n) D_k \phi(n) \right|
        \ll_k
        \limsup_{l \to \infty}\ \lVert g 1_{[N_l]} \rVert_{U^k(\Z_{k N_l})}\cdot \lVert \phi \rVert_{U^k(\N)}^{2^k-1}.
    \]
\end{lemma}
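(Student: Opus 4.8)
The plan is to unfold the definition of the dual sequence $D_k\phi$, insert it into the average $\E_{n\in[N_l]} g(n) D_k\phi(n)$, and then apply a Cauchy--Schwarz--Gowers-type estimate on a suitable cyclic group $\Z_{kN_l}$, being careful about the mismatch between the genuine average over $[N_l]$ and the cyclic average. First I would use Lemma~\ref{lemma_dualconverge}: since $\phi$ is a nilsequence, the finitary duals $D_k^{[N]}\phi(n)=\E_{h_1,\dots,h_{k-1}\in[N]}\lim_{H\to\infty}\E_{h_k\in[H]}\prod_{\underline\eta\in\{0,1\}^k_*}\mathcal C^{|\underline\eta|}\phi(\underline\eta\cdot\underline h+n)$ converge to $D_k\phi$ uniformly in $n$, so up to an error that tends to $0$ it suffices to bound $\limsup_l |\E_{n\in[N_l]} g(n) D_k^{[M]}\phi(n)|$ for large fixed $M$; and in fact, enlarging the range if needed, one can take the averaging parameter in each of the $k$ coordinates to be (a large multiple of) $N_l$ itself, so that we end up needing to control
\[
  \limsup_{l\to\infty}\left| \E_{n\in[N_l]} g(n)\, \E_{\underline h\in[N_l]^k} \prod_{\underline\eta\in\{0,1\}^k_*} \mathcal C^{|\underline\eta|} \phi(\underline\eta\cdot\underline h+n)\right|.
\]

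Next I would transfer this to the cyclic group $\Z_{kN_l}$: for $n\in[N_l]$ and $\underline h\in[N_l]^k$ every argument $\underline\eta\cdot\underline h+n$ lies in $[1,kN_l]$, so replacing $g$ by $g1_{[N_l]}$ and $\phi$ by $\phi1_{[kN_l]}$ and reading all sums modulo $kN_l$ changes nothing except the normalization of the $\underline h$-average (a harmless constant factor depending only on $k$, since $[N_l]^k$ has density $k^{-k}$ in $\Z_{kN_l}^k$). Thus, writing $\tilde g = g1_{[N_l]}$ and $\tilde\phi=\phi1_{[kN_l]}$ on $\Z_{kN_l}$, the quantity above is $\ll_k$
\[
  \limsup_{l\to\infty}\left| \E_{n\in\Z_{kN_l}} \tilde g(n)\, \E_{\underline h\in\Z_{kN_l}^k} \prod_{\underline\eta\in\{0,1\}^k_*} \mathcal C^{|\underline\eta|} \tilde\phi(n+\underline\eta\cdot\underline h)\right|.
\]
Now the inner expression is exactly a Gowers inner product: defining $f_{\underline 0} = \tilde g$ and $f_{\underline\eta}=\tilde\phi$ (or its conjugate) for $\underline\eta\neq\underline 0$, the whole average is $\bigl\langle (f_{\underline\eta})_{\underline\eta\in\{0,1\}^k}\bigr\rangle_{U^k(\Z_{kN_l})}$, and Proposition~\ref{lem:Cauchy_Schwarz_Gowers_inequality} gives the bound $\lVert \tilde g\rVert_{U^k(\Z_{kN_l})}\cdot\lVert\tilde\phi\rVert_{U^k(\Z_{kN_l})}^{2^k-1}$. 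Finally, Lemma~\ref{lem:sep-30-4} applied to the uniquely ergodic sequence $\phi$ gives $\limsup_l \lVert \phi1_{[kN_l]}\rVert_{U^k(\Z_{kN_l})}\ll_k \lVert\phi\rVert_{U^k(\N)}$, and the hypothesis $\limsup_l\E_{n\in[N_l]}|g(n)|\le 1$ controls the error terms coming from Lemma~\ref{lemma_dualconverge} (each such term is bounded by that average times a uniform bound on the approximation error, hence negligible in the limit). Combining these estimates yields
\[
  \limsup_{l\to\infty}\left|\E_{n\in[N_l]} g(n) D_k\phi(n)\right| \ll_k \limsup_{l\to\infty} \lVert g1_{[N_l]}\rVert_{U^k(\Z_{kN_l})}\cdot \lVert\phi\rVert_{U^k(\N)}^{2^k-1},
\]
as desired.

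The main obstacle I anticipate is the bookkeeping in the first step: arranging that the $k$ shift-parameters $h_1,\dots,h_k$ all range over a common interval comparable to $N_l$ (rather than over $[M]$ for a fixed $M$, or over the asymptotic limit $H\to\infty$ in the last coordinate), while keeping the error uniform in $n$ and summable against $g$. This requires combining the uniform convergence from Lemma~\ref{lemma_dualconverge} with a diagonal/telescoping argument and the normalization constant $k^{-k}$ has to be tracked so that the final constant depends only on $k$. Everything after the passage to $\Z_{kN_l}$ is a direct application of the Cauchy--Schwarz--Gowers inequality and Lemma~\ref{lem:sep-30-4}, which are routine.
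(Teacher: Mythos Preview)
Your overall strategy is sound, and your use of \cref{lemma_dualconverge} and \cref{lem:sep-30-4} matches the paper's. However, the passage to the cyclic group $\Z_{kN_l}$ contains a genuine gap. First, the arithmetic is off: for $n\in[N_l]$ and $\underline h\in[N_l]^k$ the arguments $n+\underline\eta\cdot\underline h$ range up to $(k+1)N_l$, not $kN_l$. More seriously, your claim that reading everything modulo $kN_l$ ``changes nothing except the normalization of the $\underline h$-average'' is not correct: in the cyclic Gowers inner product the variables $\underline h$ run over all of $\Z_{kN_l}^k$, not just $[N_l]^k$, and enlarging the range of $\underline h$ is \emph{not} merely a normalization---the summand is not nonnegative, so the extra terms are uncontrolled. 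Thus the inequality $|S_{\mathrm{int}}|\ll_k |S_{\mathrm{cyc}}|$ you assert does not follow, and CSG cannot be applied directly.

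The paper repairs exactly this point by keeping the last variable $h_k$ as a genuine limit $H\to\infty$ (as in \cref{lemma_dualconverge}) and applying an ordinary Cauchy--Schwarz in $\underline h=(h_1,\dots,h_{k-1})\in[N_l]^{k-1}$ \emph{before} any transfer to a cyclic group. This accomplishes two things at once: the factor carrying the $h_k$-limit becomes exactly $\lVert\phi\rVert_{U^k(\N)}^{2^k}$, and the remaining factor is an average over $\underline h\in[N_l]^{k-1}$ of \emph{squared moduli}, hence nonnegative, so extending the $\underline h$-range to $\Z_{kN_l}^{k-1}$ is now a genuine inequality. With only $k-1$ shift variables in $[N_l]$, arguments stay in $[1,kN_l]$ and no wrap-around occurs; expanding the square introduces a new (cyclic) $h_k\in\Z_{kN_l}$, producing a bona fide Gowers inner product on $\Z_{kN_l}$ to which \cref{lem:Cauchy_Schwarz_Gowers_inequality} applies, yielding $\lVert g1_{[N_l]}\rVert_{U^k(\Z_{kN_l})}^2\lVert\phi 1_{[kN_l]}\rVert_{U^k(\Z_{kN_l})}^{2^k-2}$. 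Combining the two factors and \cref{lem:sep-30-4} finishes the proof. The preliminary Cauchy--Schwarz, which creates the nonnegativity needed for the range extension, is the step your outline is missing.
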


\begin{proof}
    As mentioned in \cref{sec:prelim_dual_nilsequences}, $D_k \phi$ is a nilsequence. In particular, it is bounded. Therefore,
    \[
        \limsup_{l \to \infty} \E_{n \in [N_l]} \left|g(n) D_k \phi(n) \right| < \infty.
    \]
    Passing to a subsequence of $(N_l)$ if necessary, we may assume that the limit
    \begin{equation}
    \label{eq:oct-1-2}
         A:=\lim_{l \to \infty} \E_{n \in [N_l]} g(n) D_k \phi(n)
    \end{equation}
   exists.

    Using \cref{lemma_dualconverge} we can write
    \begin{equation}\label{eq_proof_duallemma}
      A=\lim_{l\to\infty}\E_{n\in[N_l]}g(n)\E_{\underline{h}\in[N_l]^{k-1}}\lim_{H\to\infty}\E_{h_k\in[H]} \prod_{\underline{\eta} \in \{0,1\}^k_*} \mathcal{C}^{|\underline{\eta}|} \phi\big(n + \underline{\eta} \cdot (\underline{h},h_k)\big).
    \end{equation}
    We can rewrite the last limit as
    \begin{eqnarray*}
      &&\lim_{H\to\infty}\E_{h_k\in[H]} \prod_{\underline{\eta} \in \{0,1\}^k_*} \mathcal{C}^{|\underline{\eta}|} \phi\big(n + \underline{\eta} \cdot (\underline{h},h_k)\big)
      \\&=&
      \lim_{H\to\infty}\E_{h_k\in[H]}\prod_{\underline{\eta} \in \{0,1\}^{k-1}_*} \mathcal{C}^{|\underline{\eta}|} \phi\big(n + \underline{\eta} \cdot \underline{h}\big)
      \prod_{\underline{\eta} \in \{0,1\}^{k-1}} \mathcal{C}^{|\underline{\eta}|+1} \phi\big(n + h_k+ \underline{\eta} \cdot \underline{h}\big)
      \\&=&
      \prod_{\underline{\eta} \in \{0,1\}^{k-1}_*} \mathcal{C}^{|\underline{\eta}|} \phi\big(n + \underline{\eta} \cdot \underline{h}\big)\lim_{H\to\infty}\E_{h_k\in[H]-n}
      \prod_{\underline{\eta} \in \{0,1\}^{k-1}} \mathcal{C}^{|\underline{\eta}|+1} \phi\big(h_k+ \underline{\eta} \cdot \underline{h}\big)
      \\&=&
      \prod_{\underline{\eta} \in \{0,1\}^{k-1}_*} \mathcal{C}^{|\underline{\eta}|} \phi\big(n + \underline{\eta} \cdot \underline{h}\big)\lim_{H\to\infty}\E_{h_k\in[H]}
      \prod_{\underline{\eta} \in \{0,1\}^{k-1}} \mathcal{C}^{|\underline{\eta}|+1} \phi\big(h_k+ \underline{\eta} \cdot \underline{h}\big),
    \end{eqnarray*}
    and putting this back into \eqref{eq_proof_duallemma} we obtain that $A$ equals
    $$\lim_{l\to\infty}\E_{\underline{h}\in[N_l]^{k-1}}\E_{n\in[N_l]}
    g(n)\prod_{\underline{\eta} \in \{0,1\}^{k-1}_*} \mathcal{C}^{|\underline{\eta}|} \phi\big(n + \underline{\eta} \cdot \underline{h}\big)\lim_{H\to\infty}\E_{h_k\in[H]}
      \prod_{\underline{\eta} \in \{0,1\}^{k-1}} \mathcal{C}^{|\underline{\eta}|+1} \phi\big(h_k+ \underline{\eta} \cdot \underline{h}\big).$$

    Then using the Cauchy-Schwarz inequality,
    \begin{multline}
    \label{eq:oct-1-3}
        |A|^2
        \leq
         \lim_{l \to \infty} \E_{\underline{h} \in [N_l]^{k-1}}
         \left| \E_{n \in [N_l]} g(n) \prod_{\underline{\eta} \in \{0,1\}^{k-1}_*} \mathcal{C}^{|\underline{\eta}|} \phi(n + \underline{\eta} \cdot \underline{h}) \right|^2
        \times \\
        \lim_{l \to \infty} \E_{\underline{h} \in [N_l]^{k-1}} \left|\lim_{H\to\infty}\E_{h_k\in[H]} \prod_{\underline{\eta} \in \{0,1\}^{k-1}} \mathcal{C}^{|\underline{\eta}|+1} \phi(h_k+\underline{\eta} \cdot \underline{h})\right|^2.
    \end{multline}
We first deal with the second average of the right hand side of \eqref{eq:oct-1-3}.
    Observe that for every fixed $n\in\N$,
    $$\lim_{H\to\infty}\E_{h_k\in[H]} \prod_{\underline{\eta} \in \{0,1\}^{k-1}} \mathcal{C}^{|\underline{\eta}|} \phi(h_k+\underline{\eta} \cdot \underline{h})
    =
    \lim_{H\to\infty}\E_{h_k\in[H]} \prod_{\underline{\eta} \in \{0,1\}^{k-1}} \mathcal{C}^{|\underline{\eta}|} \phi(n+h_k+\underline{\eta} \cdot \underline{h}).$$
    Therefore, expanding the square, we have
\begin{multline}
\lim_{l \to \infty} \E_{\underline{h} \in [N_l]^{k-1}} \left|\lim_{H\to\infty}\E_{h_k\in[H]} \prod_{\underline{\eta} \in \{0,1\}^{k-1}} \mathcal{C}^{|\underline{\eta}|+1} \phi(h_k+\underline{\eta} \cdot \underline{h})\right|^2
\\
    \begin{split}
=~\lim_{l \to \infty} \E_{\underline{h} \in [N_l]^{k-1}} \Bigg( \lim_{N\to\infty}\E_{n \in[N]} \prod_{\underline{\eta} \in \{0,1\}^{k-1}} & \mathcal{C}^{|\underline{\eta}|+1} \phi(n+\underline{\eta} \cdot \underline{h}) \Bigg)
\\
    & \times\Bigg(\lim_{H\to\infty}\E_{h_k\in[H]} \prod_{\underline{\eta} \in \{0,1\}^{k-1}} \mathcal{C}^{|\underline{\eta}|} \phi(h_k+\underline{\eta} \cdot \underline{h}) \Bigg)
	\end{split}
\\
    \begin{split}
    =~\lim_{l \to \infty} \lim_{N\to\infty} \lim_{H\to\infty}\E_{\underline{h} \in [N_l]^{k-1}} \Bigg( \E_{n \in[N]} \prod_{\underline{\eta} \in \{0,1\}^{k-1}} & \mathcal{C}^{|\underline{\eta}|+1} \phi(n+\underline{\eta} \cdot \underline{h}) \Bigg)
\\
	\times\Bigg(\E_{h_k\in[H]} & \prod_{\underline{\eta} \in \{0,1\}^{k-1}} \mathcal{C}^{|\underline{\eta}|} \phi(n+h_k+\underline{\eta} \cdot \underline{h}) \Bigg)
	\end{split}
\\
    \begin{split}
=~\lim_{l \to \infty} \lim_{N\to\infty} \lim_{H\to\infty}\E_{\underline{h} \in [N_l]^{k-1}} \E_{n \in[N]} \E_{h_k\in[H]}\Bigg( \prod_{\underline{\eta} \in \{0,1\}^{k-1}} & \mathcal{C}^{|\underline{\eta}|+1} \phi(n+\underline{\eta} \cdot \underline{h}) \Bigg)
\\
    \times\Bigg( &\prod_{\underline{\eta} \in \{0,1\}^{k-1}} \mathcal{C}^{|\underline{\eta}|} \phi(n+h_k+\underline{\eta} \cdot \underline{h}) \Bigg)
	\end{split}
\\
   =~\lim_{l \to \infty} \E_{h_1, \ldots, h_{k-1} \in [N_l]} \lim_{N\to\infty} \E_{n\in[N]} \lim_{H\to\infty} \E_{h_k\in[H]}  \left( \prod_{\underline{\eta} \in \{0,1\}^k} \mathcal{C}^{|\underline{\eta}|} \phi(n + \underline{\eta} \cdot \underline{h}) \right).
\end{multline}
Suppose $\phi(n) = F(T^n x)$ for $n \in \N$ where $F$ is a continuous function in an ergodic nilsystem $(X, \mu, T)$. Then
\begin{multline}
\label{eq:jan-7-1}
    \lim_{l \to \infty} \E_{h_1, \ldots, h_{k-1} \in [N_l]} \lim_{N\to\infty} \E_{n\in[N]} \lim_{H\to\infty} \E_{h_k\in[H]} \prod_{\underline{\eta} \in \{0,1\}^k} \mathcal{C}^{|\underline{\eta}|} \phi(n + \underline{\eta} \cdot \underline{h}) = \\
    \lim_{l \to \infty} \E_{h_1, \ldots, h_{k-1} \in [N_l]} \lim_{N\to\infty} \E_{n\in[N]} \lim_{H\to\infty} \E_{h_k\in[H]}  \prod_{\underline{\eta} \in \{0,1\}^k} \mathcal{C}^{|\underline{\eta}|} T^{\underline{\eta} \cdot \underline{h}} F(T^n x) = \\
    \lim_{l \to \infty} \E_{h_1, \ldots, h_{k-1} \in [N_l]} \int_X  \lim_{H\to\infty} \E_{h_k\in[H]}  \prod_{\underline{\eta} \in \{0,1\}^k} \mathcal{C}^{|\underline{\eta}|} T^{\underline{\eta} \cdot \underline{h}} F d \mu
\end{multline}
where the last equality follows from the fact that the nilsystem $(X, \mu, T)$ is uniquely ergodic.
Since the limit inside the integral exists pointwise and $F$ is bounded, we can move that limit to the outside of the integral. Hence \eqref{eq:jan-7-1} is equal to
\begin{equation}
\label{eq:jan-9-1}
    \lim_{l \to \infty} \E_{h_1, \ldots, h_{k-1} \in [N_l]}   \lim_{H\to\infty} \E_{h_k\in[H]}  \int_X \prod_{\underline{\eta} \in \{0,1\}^k} \mathcal{C}^{|\underline{\eta}|} T^{\underline{\eta} \cdot \underline{h}} F d \mu = \hk{F}_{k}^{2^k}
\end{equation}
which is equal to $\lVert \phi \rVert_{U^k(\N)}^{2^k}$ by \cref{lem:two-norms-equal}.

    We now deal with the first average of the right hand side of \eqref{eq:oct-1-3}. Let $N'_l = kN_l$ and define $g_{N'_l}, \phi_{N'_l}: \Z_{N'_l} \to \C$ by $g_{N'_l} = g 1_{[N_l]}$ and $\phi_{N_l'} = \phi 1_{[N'_l]}$. Then
    \begin{multline*}
        \E_{\underline{h} \in [N_l]^{k-1}} \left| \E_{n \in [N_l]} g(n) \prod_{\underline{\eta} \in \{0,1\}^{k-1}_*} \mathcal{C}^{|\underline{\eta}+1|} \phi(n + \underline{\eta} \cdot \underline{h}) \right|^2
        \leq \\
        k^{k+1} \E_{\underline{h} \in \Z_{N_l'}^{k-1}} \left| \E_{n \in \Z_{N_l'}} g_{N'_l} (n) \prod_{\underline{\eta} \in \{0,1\}^{k-1}_*} \mathcal{C}^{|\underline{\eta}+1|} \phi_{N'_l}(n + \underline{\eta} \cdot \underline{h}) \right|^2.
    \end{multline*}
    Expanding the square and using the periodicity of $g_{N'_l}$ and $\phi_{N'_l}$, the right hand side of above inequality is equal to
    \begin{multline*}
        k^{k+1} \E_{\underline{h}\in \Z_{N'_l}^{k-1}} \bigg{[} \E_{n \in \Z_{N'_l}}\E_{h_k \in \Z_{N_l'}} g_{N'_l}(n) \prod_{\underline{\eta} \in \{0,1\}^{k-1}_*} \mathcal{C}^{|\underline{\eta}|+1} \phi(n + \underline{\eta} \cdot \underline{h}) \times \\
         \overline{g_{N_l'}}(n + h_k) \prod_{\underline{\eta} \in \{0,1\}^{k-1}_*} \mathcal{C}^{|\underline{\eta}|} \phi_{N_l'}(n + h_k+\underline{\eta} \cdot \underline{h}) \bigg{]} = \\
        k^{k+1} \E_{n \in \Z_{N_l'}} \E_{\underline{h} \in \Z_{N'_l}^k} g_{N_l'}(n) \overline{g_{N_l'}}(n + h_k) \prod_{\substack{\underline{\eta} \in \{0,1\}^k_* \\ \underline{\eta} \neq (0, 0, 0, \ldots, 0, 1)}} \mathcal{C}^{|\underline{\eta}|} \phi_{N_l'}(n + \underline{\eta} \cdot \underline{h}).
    \end{multline*}
    By Cauchy-Schwarz-Gowers inequality (\cref{lem:Cauchy_Schwarz_Gowers_inequality}), the right hand side of above equality is bounded by
    \begin{equation}
    \label{eq:oct-1-4}
        k^{k+1} \lVert g_{N_l'} \rVert_{U^k(\Z_{N_l'})}^2 \lVert \phi_{N_l'} \rVert_{U^k(\Z_{N_l'})}^{2^k - 2}.
    \end{equation}
    By definition, $\lVert g_{N_l'} \rVert_{U^k(\Z_{N_l'})} = \lVert g 1_{[N_l]} \rVert_{U^k(\Z_{kN_l})}$.
    On the other hand, according to  \cref{lem:sep-30-4},
    \begin{equation}
    \label{eq:apply_lem_sep_30-4}
        \lVert \phi_{N_l'} \rVert_{U^k(\Z_{N_l'})} \ll_{k} \lVert \phi \rVert_{U^k(\N)}
    \end{equation}
    for $N_l'$ sufficiently large.

    Combining \eqref{eq:oct-1-3}, \eqref{eq:jan-9-1}, \eqref{eq:oct-1-4} and \eqref{eq:apply_lem_sep_30-4}, we have the conclusion.
\end{proof}

The following theorem is the main ingredient in the proof of \cref{thm:decomposition_along_primes}.
\begin{theorem}
    \label{thm:decomposition_along_primes_part1}
    Given $k$ commuting measure preserving transformations $T_1,\dots,T_k$ on a probability space $(X,\B,\mu)$ and functions $f_0,\dots,f_k\in L^\infty(X)$, let
    \[
        \alpha(n)=\int_Xf_0\cdot T_1^n f_1\cdot T_2^n f_2\cdots T_k^n f_k\d\mu.
    \]
    Then for every $\epsilon > 0$ there exists a $k$-step nilsequence $\psi$ satisfying
    \begin{equation}
    \label{eq:july-10-5}
       \lim_{N\to\infty}\frac1{|\P\cap [N]|}\sum_{p\in \P\cap [N]}\big|\alpha(p)-\psi(p)\big| < \epsilon.
    \end{equation}
\end{theorem}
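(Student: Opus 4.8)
The plan is to transfer the prime average into a von Mangoldt--weighted average and then, using that the $W$--tricked von Mangoldt function is Gowers uniform, to compare it with the unweighted average, for which the enhanced decomposition \cref{thm:strengthening_Frantzikinakis_decomposition_1} is available. First I would reduce to $\|f_i\|_\infty\le 1$, so that $\alpha$ is an approximate $k$-step nilsequence with $\|\alpha\|^*_{U^{k+1}(\N)}\le 1$, and fix $\epsilon>0$. Applying \cref{thm:strengthening_Frantzikinakis_decomposition_1} (with an auxiliary parameter $\epsilon_1$ chosen much later) produces the $k$-step nilsequence $\psi$, which moreover is a convex combination $\psi=\sum_j c_j D_{k+1}\phi_j$ of dual nilsequences with $\|\phi_j\|_{U^{k+1}(\N)}\le 1$ and satisfies $\lim_{N-M\to\infty}\E_{n\in[M,N)}|\alpha(n)-\psi(n)|<\epsilon_1$; set $\omega:=\alpha-\psi$. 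By \cref{lem:von-Mangoldt} the left-hand side of \eqref{eq:july-10-5} equals $\limsup_N\E_{n\in[N]}\Lambda'(n)|\omega(n)|$, and splitting into residues $b$ modulo $W=\prod_{p<w}p$ (the primes not coprime to $W$ contribute negligibly) this is $\tfrac1{\phi(W)}\sum_{(b,W)=1}\limsup_N\E_{m\in[N]}\Lambda_{W,b}(m)\,|\omega(Wm+b)|$. Since $\limsup_N\tfrac1{\phi(W)}\sum_{(b,W)=1}\E_{m\in[N]}|\omega(Wm+b)|\le\tfrac{W}{\phi(W)}\limsup_N\E_{n\in[N]}|\omega(n)|\le\tfrac{W}{\phi(W)}\epsilon_1$, it suffices to show that replacing $\E_{m}|\omega(Wm+b)|$ by $\E_m\Lambda_{W,b}(m)|\omega(Wm+b)|$ costs an error that is small uniformly in $b$; the factor $W/\phi(W)$ is harmless because $w$ (hence $W$) will be fixed before $\epsilon_1$ is chosen.

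Fix $b$ coprime to $W$. The first point is that $\alpha_b(m):=\alpha(Wm+b)=\int_X f_0\cdot\prod_{i=1}^k (T_i^W)^m(T_i^b f_i)\,d\mu$ is again a multicorrelation sequence for the commuting transformations $T_1^W,\dots,T_k^W$ with functions bounded by $1$, so $\|\alpha_b\|^*_{U^{k+1}(\N)}\le 1$, while $\psi_b(m):=\psi(Wm+b)$ is a $k$-step nilsequence. To remove the absolute value I would fix a small $\epsilon_2>0$, approximate $z\mapsto|z|$ uniformly on the disk of radius $R:=1+\|\psi\|_\infty$ by a polynomial $P$ in $z,\bar z$, and --- using $\E_{m\in[N]}\Lambda_{W,b}(m)=1+o_N(1)$ --- reduce, up to $O(\epsilon_2)$, to estimating $\E_{m\in[N]}\big(\Lambda_{W,b}(m)-1\big)\beta(m)$ for the finitely many sequences $\beta(m)=\alpha_b(m)^u\overline{\alpha_b(m)}^v\,\psi_b(m)^l\overline{\psi_b(m)}^{l'}$ obtained by expanding $P(\alpha_b-\psi_b,\overline{\alpha_b}-\overline{\psi_b})$. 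Each such $\beta$ is, up to uniform approximation and a harmless scalar, again a multicorrelation sequence for $k$ commuting transformations --- $\alpha_b^u\overline{\alpha_b}^v$ is one on a Cartesian power of $X$ (with $v$ of the functions conjugated), $\psi_b^l\overline{\psi_b}^{l'}$ is a $k$-step nilsequence hence realizable as such a correlation on a nilsystem, and the transformations $T\times g^i$ of the product system still commute --- so in any case it is an approximate $k$-step nilsequence with finite $(k+1)$-anti-uniform seminorm, and \cref{thm:strengthening_Frantzikinakis_decomposition} applies to a rescaling of it: $\beta=\sum_\ell c_{\beta,\ell} D_{k+1}\phi_{\beta,\ell}+\rho_\beta$ with $\|\phi_{\beta,\ell}\|_{U^{k+1}(\N)}\le 1$ and $\E_{m\in[N]}|\rho_\beta(m)|$ as small as desired.

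The core estimate is then twofold. For the dual-nilsequence pieces, the Gowers uniformity of the $W$-tricked von Mangoldt function (\cite[Theorem 7.2]{Green_Tao10}) gives $\limsup_N\|(\Lambda_{W,b}-1)1_{[N]}\|_{U^{k+1}(\Z_{(k+1)N})}=0$ uniformly in $b$, and pairing this with \cref{lem:g_2_and_dual_function} (applied with $k$ replaced by $k+1$ and $g=\Lambda_{W,b}-1$, whose average on $[N]$ is $O(1)$) shows $\E_{m\in[N]}\big(\Lambda_{W,b}(m)-1\big)D_{k+1}\phi(m)\to 0$ with a rate independent of $b$ and of the finitely many nilsequences $\phi$ with $\|\phi\|_{U^{k+1}(\N)}\le 1$ that occur. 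For the remainders $\rho_\beta$, the transference principle (\cref{thm:dense_model_theorem}) decomposes $\Lambda_{W,b}1_{[N]}=h_1+h_2$ with $|h_1|\le M$ and $\|h_2\|_{U^{k+1}(\Z_{CN})}$ small, after which $\E_{m\in[N]}(h_1(m)-1)\rho_\beta(m)$ is controlled by $(M+1)\E_{m\in[N]}|\rho_\beta(m)|$ (here we exploit that $h_1$ is bounded and $\rho_\beta$ is small in $L^1$), while $\E_{m\in[N]}h_2(m)\rho_\beta(m)$ is controlled via the finitary $(k+1)$-anti-uniform norm of $\rho_\beta$, which is finite by Frantzikinakis's bound applied to the multicorrelations $\beta$ and to $D_{k+1}\phi_{\beta,\ell}$. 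Choosing the parameters in the right order --- first the transference level (which fixes $w$, hence $W$), then $\epsilon_1$, then $\epsilon_2$ and the degree of $P$, and finally the \cref{thm:strengthening_Frantzikinakis_decomposition}-parameter for the $\beta$'s --- makes all errors $<\epsilon$, and summing over $b$ finishes the proof. The step I expect to be the main obstacle, and the reason \cref{thm:strengthening_Frantzikinakis_decomposition_1} with its explicit dual-nilsequence building blocks of controlled $U^{k+1}$-norm is needed, is precisely obtaining the comparison between the weighted and unweighted averages \emph{uniformly in $b$} as $W\to\infty$; this is exactly where \eqref{eq_frantzikinakis} being unavailable with $\epsilon=0$ would otherwise be fatal.
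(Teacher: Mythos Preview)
Your strategy diverges from the paper's at the very first move, and this is where the trouble starts. You apply \cref{thm:strengthening_Frantzikinakis_decomposition_1} once to $\alpha$ itself and then pass to residue classes, so $\psi_b(m)=\psi(Wm+b)$ is merely a $k$-step nilsequence with no remaining dual-nilsequence structure (dilating by $W$ does not commute with $D_{k+1}$). This forces the polynomial-approximation detour and the products $\beta(m)=\alpha_b(m)^u\overline{\alpha_b(m)}^v\psi_b(m)^l\overline{\psi_b(m)}^{l'}$, for which you need a finite (indeed uniformly bounded) $(k+1)$-anti-uniform seminorm. That claim is not justified: $\alpha_b^u\overline{\alpha_b}^v$ is genuinely a $k$-multicorrelation, but the factor $\psi_b^l\overline{\psi_b}^{l'}(m)=G(S^m y_0)$ is only a \emph{continuous} $k$-step nilsequence, and such sequences are not known to be $(k+1)$-anti-uniform (only \emph{smooth} ones are, via \cite[Corollary~2.15]{Host_Kra09}, and the constant depends on the smoothness). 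Your proposed realization on a product system fails because the natural measure $\mu'\times\delta_{y_0}$ is not invariant, while using Haar measure on $Y$ collapses $G(S^m y_0)$ to a constant; and ``uniform approximation by multicorrelations'' does not transfer bounds on $\|\cdot\|^*_{U^{k+1}}$. Without $\|\beta\|^*_{U^{k+1}}$ bounded, the hypothesis of \cref{thm:strengthening_Frantzikinakis_decomposition} is not met, and the term $\E_m h_2(m)\rho_\beta(m)$ is not controlled. There is also a secondary circularity in your parameter choices: you fix $w$ (the transference level) first, but the constant you need it to beat is $C_P\asymp\|\rho_\beta\|^*_{U^{k+1}}$, which depends on $P$ and on $\|\psi\|_\infty$, both chosen later.

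The paper sidesteps all of this by applying \cref{thm:strengthening_Frantzikinakis_decomposition_1} \emph{to each $\alpha_b(m)=\alpha(Wm+b)$ separately}, obtaining for every $b$ a nilsequence $\psi_{W,b}$ that \emph{is already} a convex combination of $D_{k+1}\phi$ with $\|\phi\|_{U^{k+1}(\N)}\le1$; the global $\psi$ is then glued from the $\psi_{W,b}$. The absolute value is removed not by polynomial approximation but by absorbing $\sign\!\big(\alpha_b-\psi_{W,b}\big)$ into the weight $g=\Lambda_{W,b}\cdot\sign(\cdots)$, to which the transference principle is applied. This yields only two terms to control: $\E_n g_{2,N'}(n)\alpha_b(n)$, handled by the finitary anti-uniformity of genuine multicorrelations (\cite[Lemma~3]{Frantzikinakis_Host_Kra07}), and $\E_n g_{2,N'}(n)\psi_{W,b}(n)$, handled directly by \cref{lem:g_2_and_dual_function} using the preserved dual-nilsequence structure of $\psi_{W,b}$. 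No products of multicorrelations with nilsequences ever appear, and the parameter dependencies are linear: first $w$ (hence $W$) from the transference level, then the decomposition of each $\alpha_b$.
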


\begin{proof}
    Without loss of generality, assume that $\lVert f_i \rVert_{L^\infty} \leq 1$ for $0 \leq i \leq k$.
    First, we will show that for every nilsequence $\psi$, the limit in \eqref{eq:july-10-5} exists.
    Using \cite[Proposition 2.4]{Frantzikinakis15b}, we can uniformly approximate the nilsequence $\psi$ by a multicorrelation sequence.
    By \cite{Frantzikinakis_Host_Kra07, Frantzikinakis_Host_Kra13, Wooley_Ziegler12}, for every polynomial $P \in \Z[x]$ and multicorrelation sequence $\beta$, the limit
\[
    \lim_{N\to\infty}\frac1{|\P\cap [N]|}\sum_{p\in \P\cap [N]} P(\beta(p))
\]
exists. Hence, invoking the Stone-Weierstrass theorem, the limit
\[
    \lim_{N\to\infty}\frac1{|\P\cap [N]|}\sum_{p\in \P\cap [N]} |\beta(p)|
\]
also exists, and therefore so does the limit in \eqref{eq:july-10-5}.

    Fix $\epsilon > 0$ and choose $w$ so that it satisfies the conclusion of \cref{thm:dense_model_theorem} corresponding to $\epsilon$ and $k+1$.
    Let $W=\prod_{p<w}p$ and let $b \in [W]$ be coprime to $W$.
    The sequence $n\mapsto\alpha(Wn+b)$ is a $k$-multicorrelation sequence, so we can apply \cref{thm:strengthening_Frantzikinakis_decomposition_1} to get a $k$-step nilsequence $\psi_{W,b}$ with $\|\psi_{W,b}\|_\infty\leq1$ that is a convex combination of dual nilsequences $D_{k+1} \phi$ with $\lVert \phi \rVert_{U^{k+1}(\N)}\leq1$ and satisfies
    \begin{equation}
    \label{eq:oct-2-1}
        \lim_{N-M \to \infty} \E_{n \in [M, N)} |\alpha(Wn+b) - \psi_{W,b}(n)| \leq \epsilon.
   \end{equation}
    Keeping $W$ fixed, every $m\in\N$ can be written uniquely as $m=Wn+b$ for some $n\in\N$ and $b\in[W]$.
    Define the nilsequence $\psi=\psi_\epsilon$ as follows:
    \[
        \psi(m)=\psi(Wn +b) = \begin{cases}
        \psi_{W,b}(n) \mbox{ if } (b,W) = 1, \\
        0 \mbox{ if } (b,W) \neq 1.
        \end{cases}
    \]
    That $\psi$ is indeed an nilsequence follows from \cite[Lemma 2.1]{Leibman15}.
    In view of \cref{lem:von-Mangoldt},
    \begin{multline*}
        \E_{p \in \mathbb{P}} |\alpha(p) - \psi(p)|
    =
    \lim_{M \to \infty} \E_{m \in [M]} \Lambda'(m) |\alpha(m) - \psi(m)|
    \leq \\
    \E_{(b,W) = 1} \limsup_{N \to \infty} \E_{n \in [N]} \Lambda_{W,b}(n) |\alpha(Wn + b) - \psi_{W,b}(n)|.
    \end{multline*}
    In order to establish \eqref{eq:july-10-5} it suffices to show that for each $b\in[W]$ with $(b,W) = 1$,
    \begin{equation*}
        \limsup_{N \to \infty} \E_{n \in [N]} \Lambda_{W,b}(n) |\alpha(Wn + b) - \psi_{W,b}(n)| \ll \epsilon
    \end{equation*}
    By partitioning $[N]$ into $[N] = (N/3, N] \cup (N/9, N/3] \cup \ldots$, it suffices to show that
    \begin{equation}
    \label{eq:july-10-4}
        \limsup_{N \to \infty} \E_{n \in [N/4, 3N/4]} \Lambda_{W,b}(n) |\alpha(Wn + b) - \psi_{W,b}(n)| \ll \epsilon.
    \end{equation}
    The left hand side of \eqref{eq:july-10-4} can be written as
    \begin{equation}
    \label{eq:nov-22-7}
        \limsup_{N \to \infty} \E_{n \in [N/4, 3N/4]} g(n) \big(\alpha(Wn+b) - \psi_{W,b}(n)\big)
    \end{equation}
    {where\footnote{For a complex number $z\in\C$ we define $\sign(z)$ to be $z/|z|$ if $z\neq0$ and $\sign(0)=0$.} $g(n)=\Lambda_{W,b}(n)\cdot\sign\big(\alpha(Wn+b) - \psi_{W,b}(n)\big)$.}
    In particular, $|g(n)| \leq \Lambda_{W,b}(n)$ for $n \in \N$.

    We now use \cref{thm:dense_model_theorem} and the fact that $w$ (and hence $W$) was chosen to satisfy the conclusion of that corollary.
    Let $C=C(k)$ and $M=M(k)$ be the constants provided by that corollary.
    For each $N \in \N$, let $N'= C N$. We can decompose $g 1_{[N/4, 3N/4]} = g_{1, N'} + g_{2, N'}$ on $[N']$ in such a way that
    \begin{enumerate}
        \item $|g_{1, N'}(n)| \leq M \mbox{ for } n \in [N']$,
        \item $\lVert g_{2, N'} \rVert_{U^{k+1}(\Z_{N'})} \leq \epsilon,$
        \item and $g_{1,N'}, g_{2,N'}$ are supported on $[N]$.
    \end{enumerate}
    Here and throughout the proof, we allow the implicit constant in the notation $\ll$ to depend on $k$. Note that because
    \[
        \E_{n \in [N']} |g(n) 1_{[N/4, 3N/4]}(n)| \leq \E_{n \in [N']} \Lambda_{W, b}(n) \ll 1
    \]
    and $|g_{1,N'}(n)| \leq M$ pointwise, we have $\E_{n \in [N']} |g_{2, N'}(n)| \ll 1$.

    Then it follows that
    \begin{multline}
        \limsup_{N \to \infty} \E_{n \in [N/4, 3N/4]} g(n) \big(\alpha(Wn+b) - \psi_{W,b}(n)\big) \ll \\
        \limsup_{N' \to \infty} \E_{n \in [N']} g 1_{[N/4, 3N/4]} (n) \big(\alpha(Wn+b) - \psi_{W,b}(n)\big) = \\
        \limsup_{N' \to \infty} \E_{n \in [N']} \left( g_{1, N'} (n) + g_{2, N'} \right) \big(\alpha(Wn+b) - \psi_{W,b}(n)\big).
    \end{multline}
    By \eqref{eq:oct-2-1},
    \begin{multline}
    \label{eq:oct-2-2}
        \limsup_{N' \to \infty} \left| \E_{n \in N'} g_{1, N'} (n)  \big(\alpha(Wn+b) - \psi_{W,b}(n)\big) \right|
        \leq \\
        M \limsup_{N' \to \infty} \E_{n \in [N']} |\alpha(Wn+b) - \psi_{W,b}(n)|
        \ll
        \limsup_{N' \to \infty} \E_{n \in [N']} |\alpha(Wn+b) - \psi_{W,b}(n)|
        \ll
        \epsilon.
    \end{multline}
    On the other hand, according to \cite[Lemma 3]{Frantzikinakis_Host_Kra07} (or \cite[Lemma 3.5]{Frantzikinakis_Host_Kra13}),
    \begin{equation}
    \label{eq:oct-2-3}
        \limsup_{N' \to \infty} \left| \E_{n \in [N']} g_{2, N'}(n) \alpha(Wn+b) \right| \ll \limsup_{N' \to \infty} \lVert g_{2, N'} 1_{[N']} \rVert_{U^{k+1}(\Z_{(k+1)N'})}.
    \end{equation}
    Since $g_{2, N'}$ is supported on $[N]$ and $N' = C N \geq (k+1) N$,
    \begin{equation}
    \label{eq:jan-10-2}
        \limsup_{N' \to \infty} \lVert g_{2, N'} 1_{[N']} \rVert_{U^{k+1}(\Z_{(k+1)N'})} = \limsup_{N' \to \infty} \frac{1}{k+1} \lVert g_{2, N'} \rVert_{U^{k+1}(\Z_{N'})} \ll \epsilon.
    \end{equation}

Therefore, it remains to show that
    \begin{equation}
    \label{eq:nov-23-1}
        \limsup_{N' \to \infty} \left| \E_{n \in [N']} g_{2, N'}(n) \psi_{W,b}(n) \right| \ll \epsilon.
    \end{equation}
    Since $\psi_{W,b}$ is a convex combination of dual nilsequences of the form $D_{k+1} \phi$ with $\lVert \phi \rVert_{U^{k+1}(\N)} \leq 1$, it suffices to show that
    \[
        \limsup_{N' \to \infty} \left| \E_{n \in [N']} g_{2, N'}(n) D_{k+1} \phi(n) \right| \ll \epsilon
    \]
    for any nilsequence $\phi$ with $\lVert \phi \rVert_{U^{k+1}(\N)} \leq 1$. To this end, we will patch the $g_{2, N'}$ together to make use of \cref{lem:g_2_and_dual_function}. We will choose a fast growing sequence $(N'_l)_{l \in \N}$ of natural numbers and define $g_{2, \infty} : \N \to \C$ by
    \[
        g_{2, \infty} (n) = g_{2, N'_{l}}(n) \mbox{ for } n \in (N'_{l - 1}, N'_{l}].
    \]
    For $l \in \N$, $N_{l+1}'$ is picked very large compared to $N_l'$ so that we can ``identify" $g_{2, \infty}$ with $g_{2, N'_{l}}$ on $[N'_{l}]$. To be more precise, we need for every $l \in \N$,
    \[
        \E_{n \in [N'_{l}]} |g_{2, \infty} (n)| \leq  \E_{n \in [N'_{l}]} |g_{2, N'_{l}} (n)| +\epsilon\ll 1
    \]
    and
    \begin{equation}
    \label{eq:nov-22-2}
        \lVert g_{2, \infty} 1_{[N'_{l}]} \rVert_{U^{k+1}(\Z_{N'_{l}})} \leq  \lVert g_{2, N'_{l}} \rVert_{U^{k+1}(\Z_{N'_{l}})} +\epsilon \leq 2 \epsilon
    \end{equation}
    and
    \begin{equation}
    \label{eq:nov-22-5}
        \limsup_{l \to \infty} \left| \E_{n \in [N'_{l}]} g_{2, N'_{l}} (n) D_{k+1} \phi (n) \right| \leq  \limsup_{l \to \infty} \left| \E_{n \in [N'_{l}]} g_{2, \infty} (n) D_{k+1} \phi (n) \right|+\epsilon.
    \end{equation}
With the constructed $g_{2, \infty}$, applying \cref{lem:g_2_and_dual_function}, we have
    \begin{multline}
    \label{eq:nov-22-3}
        \limsup_{l \to \infty} \left| \E_{n \in [N'_l]} g_{2, \infty} (n) D_{k+1} \phi(n) \right|
        \ll
        \limsup_{l\to\infty}  \lVert g_{2, \infty} 1_{[N'_l]} \rVert_{U^{k+1}(\Z_{(k+1) N'_l})} \lVert \phi \rVert_{U^{k+1}(\N)}^{2^{k+1} -1}
        \\ \leq
        \limsup_{l\to\infty}  \lVert g_{2, \infty} 1_{[N'_l]} \rVert_{U^{k+1}(\Z_{(k+1) N'_l})}.
    \end{multline}
    Because $g_{2,\infty} 1_{[N_l']}$ is supported on $[N_l]$ where $N_l = N_l'/C \leq N_l'/(k+1)$, we have
    \begin{equation}
    \label{eq:jan-10-1}
        \lVert g_{2, \infty} 1_{[N_l']} \rVert_{U^{k+1}(\Z_{(k+1) N_l'})} = \frac{1}{k+1} \lVert g_{2, \infty} 1_{[N_l']} \rVert_{U^{k+1}(\Z_{N_l'})} \ll \epsilon.
    \end{equation}
    Combining \eqref{eq:nov-22-3} and \eqref{eq:jan-10-1},
    \[
        \limsup_{l \to \infty} \left| \E_{n \in [N'_l]} g_{2, \infty} (n) D_{k+1} \phi (n) \right| \ll \epsilon.
    \]
    Then by \eqref{eq:nov-22-5},
    \[
        \limsup_{l \to \infty} \left| \E_{n \in [N'_l]} g_{2, N'_l} (n) D_{k+1} \phi  \right| \ll \epsilon.
    \]
    This establishes \eqref{eq:nov-23-1}, and hence effectively shows that
\begin{equation}
\label{eq:nov-23-2}
    \E_{p \in \P} |\alpha(p) - \psi(p)| \ll \epsilon.
\end{equation}
\end{proof}

Now we are ready to prove \cref{thm:decomposition_along_primes}.
\begin{proof}[Proof of \cref{thm:decomposition_along_primes}]
    The main result from \cite{Frantzikinakis15b} guarantees that there exists a $k$-step nilsequence $\psi_0$ such that $\lVert \psi_0 \rVert_{\ell^{\infty}(\N)} \leq 1$ and
    \[
        \E_{n \in \N} |\alpha(n) - \psi_0(n)| < \epsilon/2.
    \]
    In view of \cref{thm:decomposition_along_primes_part1}, there exists a $k$-step nilsequence $\psi_1$ such that
    \[
        \E_{p \in \mathbb{P}} |\alpha(p) - \psi_1(p)| < \epsilon.
    \]
    Let $W$ be large enough so that $\phi(W)/W < \epsilon/8$ (such $W$ exists because $\lim_{W \to \infty} \phi(W)/W = 0$) and define the sequence $\psi$ as follows:
    \[
        \psi(n) = \begin{cases} \psi_0(n) \mbox{ if } (n,W) \neq 1, \\
                    \psi_1(n) \mbox{ if } (n,W) = 1.
        \end{cases}
    \]
    Then $\psi$ is a $k$-step nilsequence (see for example, \cite[Lemma 2.1]{Leibman15}). Because all but finitely many primes are coprime to $W$, we have
    \[
        \E_{p \in \mathbb{P}} |\alpha(p) - \psi(p)| = \E_{p \in \mathbb{P}} |\alpha(p) - \psi_{1}(p)| < \epsilon.
    \]
    On the other hand,

    \begin{eqnarray*}
      \E_{n \in \N} |\alpha(n) - \psi(n)|
      &=&
      \left( 1 - \frac{\phi(W)}{W} \right)\E_{(n,W)\neq1}|\alpha(n) - \psi(n)|
      +
      \frac{\phi(W)}{W} \E_{(n,W) = 1}|\alpha(n) - \psi(n)|
      \\&=&
      \E_{n\in\N}|\alpha(n) - \psi_0(n)|+\frac{\phi(W)}{W}\E_{(n,W)=1}|\alpha(n) - \psi_1(n)|-|\alpha(n) - \psi_0(n)|
      \\&\leq&
      \epsilon/2 + 4 \epsilon/8 = \epsilon.
    \end{eqnarray*}
\end{proof}

\begin{proof}[Proof of \cref{thm:decomposition_poly_correlation_along_primes}]
    The proof is almost identical to the proof of \cref{thm:decomposition_along_primes}. We explain the parts that need modifications.

    Let $\alpha$ be as defined in \eqref{eq:nov-27-1} and choose $W = \prod_{p \in \P, p < w}$ sufficiently large that satisfies the conclusion of \cref{thm:dense_model_theorem} corresponding to $\epsilon$ and $\ell+1$. By \cite[Theorem 1.2]{Frantzikinakis15b}, for every $r \in \N, s \in \Z$ and $b \in [W]$ with $(b,W) = 1$, the sequence $(\alpha(r(Wn + b) + s))_{n \in \N}$ is an approximate $\ell$-step nilsequence. Moreover, they are $\ell$-antiuniform with anti-uniform seminorm bounded by $1$ (see \cite[Proposition 7 Section 23.2]{Host_Kra_18} or \cite[Proposition 6.1]{Frantzikinakis_Host_2017}). In view of \cref{thm:strengthening_Frantzikinakis_decomposition}, the sequence $(\alpha(r(Wn + b) + s))_{n \in \N}$ can be approximated in $\ell^2(\N)$ by convex combinations of dual nilsequences of the form $D_{\ell + 1} \phi$ with $\lVert \phi \rVert_{U^{\ell+1}(\N)} \leq 1$. Proceeding as in the proof of \cref{thm:decomposition_along_primes_part1}, we can find an $\ell$-step nilsequence $\psi_1$ with $\lVert \psi_1 \rVert_{\infty} \leq \lVert \alpha \rVert_{\infty}$ such that
    \[
        \E_{p \in \P} |\alpha(rp + s) - \psi_1(rp + s)| \leq \epsilon.
    \]
    By \cite{Frantzikinakis15b}, there exists an $\ell$-step nilsequence $\psi_0$ with $\lVert \psi_0 \rVert_{\infty} \leq \lVert \alpha \rVert_{\infty}$ such that
    \[
        \E_{n \in \N} |\alpha(n) - \psi_1(n)| \leq \epsilon.
    \]
    Gluing $\psi_0$ and $\psi_1$ together as in the proof of \cref{thm:decomposition_along_primes}, we obtain a nilsequence $\psi$ satisfying the conclusion of \cref{thm:decomposition_poly_correlation_along_primes}.
\end{proof}

\section{Open questions}
\label{sec:open_questions}

\begin{question}
\label{question:1}
    Let $\alpha$ be as defined in \eqref{eq:oct-14-1}. Is it true that for any $\epsilon > 0$ and finite collection of non-constant polynomials $Q_1, Q_2, \ldots, Q_t \in \Z[x]$, there exists a nilsequence $\psi$ such that for all $1 \leq i \leq t$,
    \[
        \lim_{N - M \to \infty} \frac{1}{N-M} \sum_{n=M}^{N-1} \left| \alpha(Q_i(n)) - \psi(Q_i(n)) \right| \leq \epsilon
    \]
    and
    \[
        \lim_{N\to\infty}\frac1{|\P\cap [1,N]|}\sum_{p\in \P\cap [1,N]}\big|\alpha(Q_i(p))-\psi(Q_i(p))\big| \leq \epsilon?
    \]
\end{question}

Note that, in view of \cref{thm:decomposition_poly_correlation_along_primes}, the answer to this question is affirmative for $t=1$. 

We can ask a similar question for Hardy field sequences.

\begin{question}
    Let $\alpha$ be as defined in \eqref{eq:oct-14-1}. Is it true that for any $\epsilon > 0$ and $c > 0$, there exists a $k$-step nilsequence $\psi$ such that
    \[
        \lim_{N - M \to \infty} \frac{1}{N-M} \sum_{n=M}^{N-1} |\alpha(n) - \psi(n)| \leq \epsilon
    \]
    and
    \[
       \limsup_{N\to\infty}\frac1{N}\sum_{n \in [N]}\big|\alpha(\lfloor n^c \rfloor)-\psi(\lfloor n^c \rfloor)\big| \leq \epsilon.
    \]
    where $\lfloor x \rfloor$ denotes integer part of $x$.
\end{question}

The following question has been asked several times in the literature, see \cite[Remark after Theorem 1.1]{Frantzikinakis15b}, \cite[Problem 20]{Frantzikinakis_17}, \cite[Problem 1, Section 2.7]{Frantzikinakis_Host_2018}, and \cite[Page 398]{Host_Kra_18}.

\begin{question}\label{question_zeroepsilon}
  Let $\alpha$ be defined as in \eqref{eq:oct-14-1} (or, more generally, as in \eqref{eq:nov-27-1}).
  Does there exist a uniform limit of nilsequences $\psi$ such that
  $$\lim_{N-M\to\infty}\frac1{N-M}\sum_{n=M}^N\big|\alpha(n)-\psi(n)\big|=0?$$
\end{question}

If the answer to \cref{question_zeroepsilon} is affirmative, then the method in \cite{Tao_Teravainen_17} can be used to answer affirmatively the following seemingly more difficult question:

\begin{question}\label{question_zeroepsilon_prime}
   Let $\alpha$ be defined as in \eqref{eq:oct-14-1} (or, more generally, as in \eqref{eq:nov-27-1}).
  Does there exist a uniform limit of nilsequences $\psi$ such that for all $r \in \N, s \in \N \cup\{0\}$,
  $$\lim_{N-M\to\infty}\frac1{N-M}\sum_{n=M}^N\big|\alpha(rn + s)-\psi(rn +s)\big|=0$$
  and
  \[
    \lim_{N \to \infty} \frac{1}{|\P \cap [N]|} \sum_{p \in \P \cap [N]} \big| \alpha(rp + s) - \psi(rp+s)\big| = 0?
  \]
\end{question}


\section*{Acknowledgments} 
We thank Bryna Kra and Nikos Frantzikinakis for helpful feedback and comments. The third author is supported by the National Science Foundation under grant number DMS 1901453. We also thank the anonymous referee for useful suggestions. 

\bibliographystyle{amsplain}


\begin{dajauthors}
\begin{authorinfo}[pgom]
    Anh N.\ Le\\
    Ohio State University\\\par\nopagebreak
    Columbus, OH, USA\\
\href{mailto:le.286@osu.edu}
{\texttt{le.286@osu.edu}}
\end{authorinfo}
\begin{authorinfo}[johan]
    Joel Moreira\\
    University of Warwick\\
    Coventry, UK\\
\href{mailto:joel.moreira@warwick.ac.uk}
{\texttt{joel.moreira@warwick.ac.uk}}
\end{authorinfo}

\begin{authorinfo}[laci]
    Florian K.\ Richter\\
    Northwestern University\\
    Evanston, IL, USA\\
\href{mailto:fkr@northwestern.edu}
{\texttt{fkr@northwestern.edu}}
\end{authorinfo}

\end{dajauthors}

\end{document}